\documentclass{amsart}
\usepackage{amsmath, amsthm, amsfonts, amssymb}
\usepackage{mathrsfs}
\usepackage{microtype}
\usepackage[utf8]{inputenc}
\providecommand{\noopsort[1]{}}
\usepackage{bbm}
\usepackage{dsfont}
\usepackage{ifthen}
\usepackage{nicefrac}
\numberwithin{equation}{section}
\usepackage{a4}
\usepackage[active]{srcltx}
\usepackage{verbatim}
\usepackage{hyperref}

\usepackage[shortlabels]{enumitem}
\setlist{leftmargin=*}
\setlist[1]{labelindent=1.2\parindent}




\newtheorem{thm}{Theorem}[section]
\newtheorem{cor}[thm]{Corollary}
\newtheorem{prop}[thm]{Proposition}
\newtheorem{lem}[thm]{Lemma}

\theoremstyle{remark}
\newtheorem{rem}[thm]{Remark}

\newtheorem{hyp}[thm]{Hypothesis}
\newtheorem{example}[thm]{Example}

\theoremstyle{definition}
\newtheorem{defn}[thm]{Definition}
\newcommand{\coloneqq}{\mathrel{\mathop:}=}

\renewcommand{\Re}{{\rm Re}\,}

\newcommand{\eps}{\varepsilon}

\newcommand{\one}{\mathds{1}}

\newcommand{\CR}{\mathds{R}}
\newcommand{\CC}{\mathds{C}}

\newcommand{\CN}{\mathds{N}}
\newcommand{\CZ}{\mathds{Z}}

\newcommand{\cB}{\mathscr{B}}
\newcommand{\cM}{\mathscr{M}}

\newcommand{\cL}{\mathscr{L}}
\newcommand{\cA}{\mathscr{A}}

\newcommand{\la}{\langle}
\newcommand{\ra}{\rangle}

\begin{document}
\title{Diffusion with nonlocal boundary conditions}
\author{Wolfgang Arendt}
\email{wolfgang.arendt@uni-ulm.de}
\address{Institute of Applied Analysis, Ulm University, 89069 Ulm, Germany}

\author{Stefan Kunkel}
\email{stefan.kunkel@uni-ulm.de}
\author{Markus Kunze}
\email{markus.kunze@uni-ulm.de}
\address{Graduiertenkolleg 1100, Ulm University, 89069 Ulm, Germany}
\thanks{SK and MK were supported by the \emph{Deutsche Forschungsgemeinschaft} in the framework of the 
DFG research training group 1100}

\begin{abstract}
We consider second order differential operators $A_\mu$ on a bounded, Dirichlet regular set $\Omega \subset \CR^d$, subject to the nonlocal boundary conditions
\[
u(z) = \int_\Omega u(x)\, \mu (z, dx)\quad \mbox{for } z \in \partial \Omega.
\]
Here the function $\mu : \partial\Omega \to \cM^+(\Omega)$ is $\sigma (\cM (\Omega), C_b(\Omega))$-continuous with $0\leq \mu(z,\Omega) \leq 1$ for all $z\in \partial \Omega$. 
Under suitable assumptions on the coefficients in $A_\mu$, we 
 prove that $A_\mu$ generates a holomorphic positive contraction semigroup $T_\mu$ 
on $L^\infty(\Omega)$. The semigroup $T_\mu$ is never strongly continuous, but it enjoys the strong Feller property in the sense that it consists of kernel operators and takes values in $C(\overline{\Omega})$.  We also prove
that $T_\mu$ is immediately compact and study the  asymptotic behavior of $T_\mu(t)$ as $t \to \infty$.
\end{abstract}

\keywords{Diffusion process, nonlocal boundary conditions, asymptotic behavior}
\subjclass[2010]{47D07,60J35, 35B35}

\maketitle

\section{Introduction}

W.\ Feller \cite{feller-semigroup, feller-diffusion} has studied one-dimensional diffusion processes  and their corresponding transition semigroups. In particular, he characterized the boundary conditions which must be satisfied by functions in the domain of the generator of the transition semigroup. These include besides the classical Dirichlet and Neumann boundary conditions also certain nonlocal boundary conditions.

Subsequently, A.\ Ventsel' \cite{wenzell} considered the corresponding problem for diffusion problems on a domain 
$\Omega \subset \CR^d$ with smooth boundary. He characterized boundary conditions which can potentially occur for the generator of a transition semigroup. Naturally, the converse question of proving that a second order elliptic operator (or more generally, an integro-differential operator) subject to certain (nonlocal) boundary conditions indeed generates a transition semigroup, has recieved a lot of attention, see the article by Galakhov and Skubachevski{\u\i} \cite{galaskub}, the book by Taira \cite{taira} and the references therein. We would like to point out that the interest in general -- also nonlocal -- boundary conditions is not only out of mathematical curiosity. In fact, nonlocal boundary conditions appear naturally in applications, e.g.\ in thermoelasticity \cite{day},  in climate control systems \cite{gjs} and in financial methematics \cite{gk02}.\medskip

In this article, we consider second order differential operators on a bounded open subset $\Omega$ of $\CR^d$, 
formally given by
\[
\cA u \coloneqq \sum_{ij=1}^d  a_{ij}D_iD_j u  + \sum_{j=1}^d b_jD_ju +c_0u.
\]
Below, we will define a realization $A_\mu$ of $\cA$ on $C(\overline{\Omega})$ subject to nonlocal boundary conditions of the form
\[
u(z) = \int_\Omega u(x)\, \mu (z, dx)
\]
for all $z \in \partial \Omega$, where $\mu : \partial \Omega \to \mathscr{M}^+(\Omega)$ is a measure-valued function with 
$0\leq \mu (z, \Omega) \leq 1$. Here, $\mathscr{M}(\Omega)$ denotes the space of all (complex) Borel measures on $\Omega$ and 
$\mathscr{M}^+(\Omega)$ refers to the cone of positive measures.

This boundary condition has a clear probabilistic interpretation. Whenever a diffusing particle reaches the boundary (say at the point $z \in \partial \Omega$), it immediately jumps back to the interior $\Omega$. The point it jumps to is chosen randomly, according to the distribution $\mu(z,\cdot)$. In the case where $\mu (z, \Omega) < 1$, the particle ``dies'' with probability $1-\mu (z,\Omega)$. This is a multidimensional version of what Feller called in \cite{feller-diffusion} an \emph{instantaneous return process}. Stochastic processes of this form were constructed by Grigorescu and Kang \cite{gk02, gk07} and Ben-Ari and Pinsky \cite{b-ap07,b-ap09}.\medskip

We will now make precise our assumptions on $\Omega$, the coefficients $a_{ij}, b_j$ and $c_0$ as well as the measures 
$\mu$. Unexplained terminology will be discussed in Section 3.

\begin{hyp}\label{hyp1}
Let $\Omega$ be a bounded, open, Dirichlet regular subset of $\CR^d$ and $a_{ij} \in C(\overline{\Omega})$, $b_j, c_0 \in L^\infty(\Omega)$ be real valued for $i,j=1, \ldots, d$. We assume that the coefficients $a_{ij}$ are symmetric (i.e.\ $a_{ij}= a_{ji}$ for all $i,j=1, \ldots, d$) and strictly elliptic in the sense that there exists a constant $\eta>0$ such that for all $\xi \in \CR^d$ we have
\begin{equation}
\label{eq.elliptic}
\sum_{i,j=1}^d a_{ij}(x) \xi_i\xi_j \geq \eta |\xi|^2
\end{equation}
almost everywhere. Moreover, we assume that $c_0 \leq 0$ almost everywhere.

Finally, we assume one of the following regularity conditions:
\begin{enumerate}
[(a)]
\item the coefficients $a_{ij}$ are Dini continuous for all $i,j= 1,\ldots, d$ or
\item $\Omega$ satisfies the uniform exterior cone condition.
\end{enumerate}
\end{hyp}
Concerning the measure $\mu$, we assume the following.
\begin{hyp}\label{hyp2}
We assume that $\mu: \partial \Omega \to \mathscr{M}^+(\Omega)$ is $\sigma (\mathscr{M}(\Omega), C_b(\Omega))$-continuous and satisfies $0\leq \mu (z, \Omega) \leq 1$ for all $z \in \partial \Omega$.
\end{hyp}

In Hypothesis \ref{hyp2} we let $C_b(\Omega) \coloneqq \{ f: \Omega \to \CC, f \mbox{ is bounded and continuous}\}$ and denote by
$\sigma (\mathscr{M}(\Omega), C_b(\Omega))$ the weak topology induced by $C_b(\Omega)$ on $\mathscr{M}(\Omega)$ via the natural duality.

Hypotheses \ref{hyp1} and \ref{hyp2}  will be assumed throughout without further mentioning.\smallskip

For the purpose of this paper we define the space $W(\Omega)$ by
\begin{equation}\label{eq.w}
	W(\Omega)\coloneqq\bigcap_{1<p<\infty}W^{2,p}_\mathrm{loc}(\Omega).
\end{equation}
The reason for this choice is that by elliptic regularity \cite[Lemma 9.16]{gt} we have $u\in W(\Omega)$ whenever $u\in W^{2,p}_\mathrm{loc}(\Omega)$ for some $1<p<\infty$ and $\mathscr A u\in L^\infty_\mathrm{loc}(\Omega)$.

We  define the realization $A_\mu$ of $\cA$ in $L^\infty(\Omega)$ subject to nonlocal boundary conditions as follows.
\begin{equation}\label{eq.amu}
\begin{aligned}
D(A_{\mu}) & \coloneqq \Big\{ u \in C(\overline{\Omega})\cap W(\Omega) :\cA u \in L^\infty(\Omega), \\
&\qquad\qquad u(z) = \int_\Omega u(x)\, 
\mu (z, dx)\,\, \forall\, z \in \partial\Omega\Big\}\\
A_{\mu} u & \coloneqq \cA u.
\end{aligned}
\end{equation}

We can now formulate the main result of this article.

\begin{thm}\label{t.main}
Assume Hypotheses \ref{hyp1} and \ref{hyp2} and define the operator $A_\mu$ by Equation \eqref{eq.amu}. Then the following hold true:
\begin{enumerate}[(a)]
\item $A_\mu$ is the generator of a holomorphic semigroup $T_\mu := (T_\mu(t))_{t > 0}$
on $L^\infty(\Omega)$.
\item The semigroup $T_\mu$  consists of positive and contractive operators. Moreover, it is strongly Feller in the sense that $T_\mu(t)$ is a kernel operator taking values in $C(\overline{\Omega})$ for $t>0$. In particular, $T_\mu$ leaves the space $C(\overline{\Omega})$ invariant.
\item $T_\mu(t)$ is compact for every $t>0$.
\item There exist a positive projection $P$ of finite rank and constants $\eps>0$ and $M\geq 1$ such that
\[
\|T_\mu (t) - P\| \leq Me^{-\eps t}
\]
for all $t > 0$.
\end{enumerate}
\end{thm}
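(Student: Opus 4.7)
The plan is to construct the resolvent $R(\lambda, A_\mu)$ explicitly for large real $\lambda$ by reducing the resolvent equation $(\lambda-\cA)u=f$ with the nonlocal boundary condition to the classical Dirichlet problem on $\Omega$ plus a scalar equation on $\partial\Omega$. Under Hypothesis~\ref{hyp1}, Dirichlet regularity of $\Omega$ yields two building blocks (which I expect to have been set up in the preceding sections): the Dirichlet resolvent $R_\lambda^D\colon L^\infty(\Omega)\to C(\overline{\Omega})\cap W(\Omega)$ solving $(\lambda-\cA)u=f$ with $u|_{\partial\Omega}=0$, and the $(\lambda-\cA)$-harmonic extension $E_\lambda\colon C(\partial\Omega)\to C(\overline{\Omega})\cap W(\Omega)$ solving $(\lambda-\cA)u=0$ with $u|_{\partial\Omega}=\varphi$. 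Any $u\in D(A_\mu)$ with $(\lambda-A_\mu)u=f$ decomposes as $u=R_\lambda^D f + E_\lambda\varphi$ with $\varphi=u|_{\partial\Omega}$, and the nonlocal boundary condition translates into $(I-M_\lambda)\varphi=N_\lambda f$ on $\partial\Omega$, where
\[
(M_\lambda\varphi)(z) \coloneqq \int_\Omega E_\lambda\varphi\,\mu(z,dx), \qquad (N_\lambda f)(z) \coloneqq \int_\Omega R_\lambda^D f\,\mu(z,dx).
\]

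For parts (a) and (b), I would first show that $\|M_\lambda\|_{\cL(C(\partial\Omega))}\to 0$ as $\lambda\to\infty$: the maximum principle gives $0\leq E_\lambda\one\leq\one$ with $E_\lambda\one\to 0$ locally uniformly on $\Omega$, and the weak$^*$ continuity of $\mu$ together with compactness of $\partial\Omega$ upgrades the pointwise bound $(M_\lambda\one)(z)\to 0$ to uniform convergence on $\partial\Omega$. Hence $(I-M_\lambda)^{-1}$ exists for $\lambda$ large, and
\[
R(\lambda,A_\mu)=R_\lambda^D + E_\lambda(I-M_\lambda)^{-1} N_\lambda.
\]
Positivity follows from the Neumann expansion because each factor is positive, and the contraction estimate $\|\lambda R(\lambda,A_\mu)\|\leq 1$ comes from the maximum principle: if $u=R(\lambda,A_\mu)f$ attains $\pm\|u\|_\infty$ at $z_0\in\overline{\Omega}$, then either $z_0\in\Omega$ (interior elliptic maximum principle using $c_0\leq 0$) or $z_0\in\partial\Omega$ (boundary condition with $\mu(z_0,\Omega)\leq 1$) force $\lambda\|u\|_\infty\leq\|f\|_\infty$. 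Holomorphy on $L^\infty(\Omega)$ is obtained by extending the construction into a complex sector around $\CR_+$: classical sector estimates for $R_\lambda^D$ transfer because $\|N_\lambda\|=O(|\lambda|^{-1})$, while $\|(I-M_\lambda)^{-1}\|$ and $\|E_\lambda\|$ remain bounded uniformly in the sector. The strong Feller/kernel statement of (b) is inherited from the corresponding structure of $R_\lambda^D$ and $E_\lambda$ and passes to $T_\mu(t)$ via the Dunford--Cauchy representation.

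For (c) I would factor $T_\mu(t)=T_\mu(t/2)T_\mu(t/2)$: by (b) the right factor maps $L^\infty(\Omega)$ into $C(\overline{\Omega})$, and combining interior $W^{2,p}$-regularity (built into the definition \eqref{eq.w} of $W(\Omega)$) with uniform boundary control from $E_\lambda$ gives equicontinuity of $T_\mu(t/2)$ on the unit ball of $L^\infty(\Omega)$, so Arzel\`a--Ascoli makes $T_\mu(t/2)\colon L^\infty(\Omega)\to C(\overline{\Omega})\hookrightarrow L^\infty(\Omega)$ compact. For (d), compactness forces $\sigma(A_\mu)$ to consist of isolated eigenvalues of finite algebraic multiplicity with no finite accumulation, and contractivity gives $s(A_\mu)\leq 0$. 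Positivity combined with the strong Feller property (serving as the irreducibility input) allows a Krein--Rutman-type argument to show that the peripheral spectrum reduces to a single algebraically simple cluster with no nilpotent part, producing a positive finite-rank projection $P$. Because $A_\mu$ generates a holomorphic semigroup, spectral bound equals growth bound on $\ker P$, and the isolation of this peripheral cluster delivers the gap $\eps>0$ with $\|T_\mu(t)-P\|\leq Me^{-\eps t}$.

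The hard parts will be: extracting $\|M_\lambda\|\to 0$ from only the weak$^*$ continuity of $\mu$ rather than norm continuity; establishing the sectorial estimate for $E_\lambda$ on complex $\lambda$, which is essential since $L^\infty$-holomorphic semigroups are never strongly continuous and so holomorphy must come from direct resolvent estimates rather than a dense core; and the peripheral-spectrum analysis in (d), where positivity and the strong Feller property must be carefully combined to exclude eigenvalues on $i\CR\setminus\{\Re\lambda_0\}$ and to rule out any nontrivial Jordan structure of the projection~$P$.
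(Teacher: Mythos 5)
Your construction is essentially a boundary\--space version of the paper's argument, but the route is genuinely different in two respects. The paper perturbs on $C(\overline{\Omega})$: it introduces $S_\lambda v \coloneqq E_\lambda\langle v,\mu(\cdot)\rangle$, proves that $S_\lambda^2$ is compact (via Prokhorov tightness of $\{\mu(z):z\in\partial\Omega\}$ and interior Schauder\--type estimates), deduces invertibility of $I-S_\lambda$ for \emph{all} $\Re\lambda>0$ from a Fredholm alternative plus injectivity of $\lambda-A_\mu$ (a maximum\--principle argument), and writes $R(\lambda,A_\mu)=(I-S_\lambda)^{-1}R(\lambda,A_0)$. Your operators $M_\lambda=\tau_\mu E_\lambda$ on $C(\partial\Omega)$ and $S_\lambda=E_\lambda\tau_\mu$ are conjugate in the usual $AB$ versus $BA$ sense, and your formula $R(\lambda,A_\mu)=R_\lambda^D+E_\lambda(I-M_\lambda)^{-1}N_\lambda$ agrees with the paper's via $(I-E_\lambda\tau_\mu)^{-1}=I+E_\lambda(I-\tau_\mu E_\lambda)^{-1}\tau_\mu$. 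What you lose by inverting $I-M_\lambda$ only through a Neumann series for large $\lambda$ is exactly what the paper's Fredholm argument buys: resolvent existence on all of $\{\Re\lambda>0\}$, which feeds directly into the spectral localization needed for part (d) and for the exponential stability statements. What you gain is a more transparent strong Feller argument: since $E_\lambda(I-M_\lambda)^{-1}$ is a bounded operator from $C(\partial\Omega)$ into $C(\overline{\Omega})$, the kernel/pointwise\--continuity property only has to be checked for the Dirichlet resolvent $R_\lambda^D$ and for $N_\lambda$ (the latter by tightness and Dini's theorem), whereas the paper has to rerun the Poisson\--operator argument for $R(\lambda,A_\mu)$ itself.

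Two steps in your plan are genuine gaps as written. First, the uniform bound on $(I-M_\lambda)^{-1}$ for complex $\lambda$ in a half\--plane, which you correctly flag: the maximum principle only gives $\|E_\lambda\|\leq 1$ for $\Re\lambda>0$, hence $\|M_\lambda\|\leq 1$, which does not make the Neumann series converge. The missing ingredient is the domination $|E_\lambda\varphi|\leq E_{\Re\lambda}|\varphi|$ pointwise for $\Re\lambda\geq\delta$, which the paper obtains by showing that $\lambda\mapsto S_\lambda$ is completely monotone on $(0,\infty)$ (using $S_\lambda'=-R(\lambda,A_0)S_\lambda$) and invoking Bernstein's theorem to write $(S_\lambda v)(x)=\int_0^\infty e^{-\lambda t}\,d\alpha(t)$. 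Without this, or some substitute, your sector estimate for $\lambda R(\lambda,A_\mu)$ does not close, and holomorphy of $T_\mu$ is precisely the property you cannot bypass here, since the semigroup is not strongly continuous.

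Second, your argument for (d) leans on irreducibility (``the strong Feller property serving as the irreducibility input'') and a Krein\--Rutman simplicity statement. Irreducibility genuinely fails in the setting of Theorem \ref{t.main}: $\Omega$ may be disconnected and the measures $\mu(z)$ may not communicate between components, so $\ker A_\mu$ (and hence $P$) can have rank greater than one; the theorem only asserts finite rank. The paper's actual mechanism needs no irreducibility: (i) $0$ is a pole of order one of the resolvent because the resolvent is compact and $\|\lambda R(\lambda,A_\mu)\|\leq 1$ for $\lambda>0$ rules out higher\--order poles; (ii) $\sigma(A_\mu)\cap i\CR$ is finite by holomorphy and compactness, while the cyclicity of the boundary spectrum of a positive semigroup forces $is\in\sigma(A_\mu)$ to imply $iks\in\sigma(A_\mu)$ for all $k\in\CZ$, whence $\sigma(A_\mu)\cap i\CR\subseteq\{0\}$. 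Replace the Krein\--Rutman step by this cyclicity argument and the rest of your outline for (d) (residue projection, contour\--integral decay on $\operatorname{ran}(I-P)$) goes through.
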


Theorem \ref{t.main} extends the results in the existing literature in several aspects. First of all, in the quoted results 
\cite{galaskub, taira, b-ap07, b-ap09}, $\Omega$ is assumed to have smooth boundary. Here, we only assume that $\Omega$ is  Dirichlet regular or (if the $a_{ij}$ are merely continuous) that $\Omega$ satisfies the uniform exterior cone condition  (see Section 3). Both are fairly weak regularity assumptions which are satisfied for Lipschitz boundaries. It seems that Dirichlet regularity is the weakest regularity assumption possible for this problem. Indeed, in the case of Lipschitz continuous diffusion coefficients it is proved in  \cite{ab99} that a  second order differential operators with Dirichlet boundary conditions (which corresponds to the choice $\mu=0$ above) generates a strongly continuous semigroup on $C_0(\Omega)$
if and only if $\Omega$ is Dirichlet regular. We also note that our regularity assumptions on the coefficients are much weaker than in the 
articles quoted above; in the case of Dirichlet regular $\Omega$ we assume that the diffusion coefficients are Dini-continuous (see Section 3) so that in particular H\"older-continuous coefficients are possible. If we assume slightly more regular boundary, we can even allow general continuous diffusion coefficients.
\smallskip 

We also obtain more information about the semigroup generated by the operator $A_\mu$. First of all, 
we obtain a semigroup on all of $C(\overline{\Omega})$ (even $L^\infty(\Omega)$) 
rather than on a closed subspace $C_\mu(\overline{\Omega})$ thereof  as in \cite{galaskub}.  This comes at the cost that our semigroup $T_\mu$ is not strongly continuous. Having a semigroup on all of $C(\overline{\Omega})$ has important consequences. For example, it follows that the semigroup $T_\mu$ is given through \emph{transition probabilities}, see Section 5 and also \cite[Theorem 9.2.2]{t88}. It is thus possible to construct -- in a canonical way, cf.\ \cite[Theorem 4.1.1]{ek} -- a Markov process with transition semigroup $T_\mu$. This gives an analytic approach to immediate return processes. In \cite{b-ap09}, the authors worked the other way round. They constructed the stochastic process directly and then used the process to study the transition semigroup on the $L^p$-scale. Another consequence of having a semigroup on $C(\overline{\Omega})$ is that, via duality, we obtain an adjoint semigroup $T_\mu^*$ on $\mathscr{M}(\overline{\Omega})$, the space 
of all (complex) measures on the Borel $\sigma$-algebra on $\overline{\Omega}$. This semigroup is important in probability theory, as it can be used to compute distributions of the associated Markov process from the initial distribution of the process. 
It now follows from part (d) of Theorem \ref{t.main} that the distributions of the associated Markov process converge in the total variation norm for every initial distribution.\smallskip

Second, we obtain from our techniques that the semigroup $T_\mu$ is holomorphic. Besides being interesting in its own right, it is the holomorphy of the semigroup which allows us to work with generators even though the semigroups under consideration are not strongly continuous.\smallskip 

Third, we prove that the semigroup $T_\mu$ is immediately compact.
This result is of particular importance.
In \cite[Proposition 4.7]{as14} it is proved that if $\Omega$ satisfies the uniform exterior cone condition, the semigroup generated by the Dirichlet Laplacian on $C_0(\Omega)$ is immediately compact. The proof is based on the fact that the domain of the Dirichlet Laplacian is contained in a H\"older space which, in turn, is compactly embedded into $C_0(\Omega)$. However assuming merely Dirichlet regularity this strategy to prove compactness cannot work. Here, we pursue a different approach, based on the strong Feller property. This is, once again, possible because we obtain a semigroup on all of $L^\infty(\Omega)$.

Using our additional information about the semigroup allows us to deduce information about the asymptotic behavior of the semigroup. If $\Omega$ is connected, $c_0=0$ 
and every measure $\mu(z)$ is a probability measure, we show in Corollary \ref{c.asympt}
that the semigroup converges to an equilibrium.
\smallskip 

Let us briefly compare our boundary conditions with other results in the literature. There one finds different, partially weaker, assumptions on the measures $\mu(z)$
defining the boundary conditions. In particular, Galakhov and Skubachevski{\u\i} \cite{galaskub} allow jumps on the boundary, i.e.\ 
$\mu(z)$ may have mass on the boundary whereas we require that for every $\mu(z)$ the boundary $\partial \Omega$ is a null set. Gurevich \cite{gur08} allows also functions $z \mapsto \mu(z)$ with certain discontinuities. Finally, Peng and Li \cite{pl13} consider the situation where the particle having reached the boundary first stays there before jumping back to the interior.
\smallskip

This article is structured as follows. Sections 2 and 3 contain preliminary results.
In Section 4, we prove a generation result from which in particular part (a) of Theorem \ref{t.main} follows. Here we also establish half of part (b) of our main theorem.
In Section 5, we recall the basic definitions concerning kernel operators and the strong Feller property. We then show that 
the resolvent of $A_\mu$ consists of strong Feller operators and deduce the rest of (b) as well as the remaining parts (c) and (d) of Theorem \ref{t.main} from this.

\section{Holomorphic semigroups}

The semigroups we are studying in this article are not strongly continuous. As this is not a standard situation, we recall the relevant definitions and results in this preliminary section. The following definition is taken from \cite[Section 3.2]{abhn}

\begin{defn}
Let $X$ be a Banach space. A \emph{semigroup} is a strongly continuous mapping $T : (0,\infty) \to \cL (X)$ such that
\begin{enumerate}
[(a)]
\item $T(t+s) = T(t)T(s)$ for all $t,s >0$;
\item there exist constants $M >0$ and $\omega \in \CR$ such that $\|T(t)\|\leq Me^{\omega t}$ for all $t>0$;
\item if $T(t)x=0$ for all $t>0$ it follows that $x=0$.
\end{enumerate}
We say that $T$ is \emph{of type} $(M,\omega)$ to emphasize that (b) holds with these constants.
\end{defn}

If $T$ is a semigroup of type $(M,\omega)$, there exists a unique operator $A$ such that $(\omega, \infty) \subset \rho (A)$ and
\[
R(\lambda, A)x = \int_0^\infty e^{-\lambda t} T(t)x\, dt
\]
for all $x \in X$ and $\lambda > \omega$, see \cite[Equation (3.13)]{abhn}. The operator $A$ is called the \emph{generator} of $T$.

We now characterize when $T$ is a \emph{contraction semigroup}, i.e.\ of type $(1,0)$.

\begin{prop}\label{p.contr}
Let $T$ be a semigroup of type $(M,\omega)$ 
on the Banach space $X$ and $A$ be the generator of $T$. The following are equivalent.
\begin{enumerate}
[(i)] 
\item $\|T(t)\| \leq 1$ for all $t>0$;
\item $\|\lambda R(\lambda, A)\| \leq 1$ for all $\lambda >\omega$.
\end{enumerate}
\end{prop}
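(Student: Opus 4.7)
Both implications are built on the Laplace transform identity $R(\lambda,A)x = \int_0^\infty e^{-\lambda t} T(t)x\, dt$ stated just above the proposition, and the strategy is essentially to invert this relationship quantitatively. The implication (i) $\Rightarrow$ (ii) is the easy direction: assuming $\|T(t)\|\leq 1$ for all $t>0$, the semigroup $T$ is in particular of type $(1,0)$, the Laplace integral converges absolutely for every $\lambda>0$, and I would estimate directly under the integral
\[
\|\lambda R(\lambda, A) x\| \leq \lambda\int_0^\infty e^{-\lambda t}\|T(t)x\|\, dt \leq \|x\|\cdot \lambda\int_0^\infty e^{-\lambda t}\, dt =\|x\|,
\]
so the contractivity estimate (ii) is immediate on $(0,\infty)$, which contains the relevant range $(\omega,\infty)$.

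For (ii) $\Rightarrow$ (i) I would use the real (Post--Widder) inversion of the Laplace transform. Differentiating the resolvent formula $n$ times under the integral produces
\[
R(\lambda,A)^{n+1}x = \frac{1}{n!}\int_0^\infty s^n e^{-\lambda s}T(s)x\, ds,
\]
and the corresponding inversion theorem (\cite[Theorem 1.7.7]{abhn}) applied to the Laplace transform $\lambda\mapsto R(\lambda,A)x$ then yields the pointwise representation
\[
T(t)x = \lim_{n\to\infty}\left(\frac{n}{t}\right)^{n+1} R\!\left(\frac{n}{t},A\right)^{n+1}x, \qquad t>0,\ x\in X.
\]
Under (ii), for any fixed $t>0$ the bound $n/t>\omega$ holds for all sufficiently large $n$, whence
\[
\left\|\left(\tfrac{n}{t}\right)^{n+1} R\!\left(\tfrac{n}{t},A\right)^{n+1}\right\| = \left\|\tfrac{n}{t}\,R\!\left(\tfrac{n}{t},A\right)\right\|^{n+1}\leq 1,
\]
and passing to the limit gives $\|T(t)x\|\leq\|x\|$ for every $x\in X$, which is (i).

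The main obstacle is the justification of the Post--Widder inversion in the present, possibly non-strongly-continuous setting, since the classical formulation is usually given for $C_0$-semigroups. Fortunately, the semigroup definition borrowed from \cite[Section 3.2]{abhn} requires $T$ to be strongly continuous on $(0,\infty)$ and exponentially bounded, so the vector-valued map $s\mapsto T(s)x$ is continuous and Laplace transformable with transform $R(\cdot,A)x$ on $(\omega,\infty)$, and the real inversion theorem applies verbatim. An equivalent route, should one wish to avoid citing the vector-valued inversion theorem, is to pass to the once-integrated semigroup $S(t)x\coloneqq\int_0^t T(s)x\, ds$, apply the Hille--Yosida-type machinery for integrated semigroups in \cite[Chapter 3]{abhn}, and recover $T(t)x$ from $S$ by differentiation; the same resolvent-power bound propagates through the construction.
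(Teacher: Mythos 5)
Your argument is essentially identical to the paper's: the forward direction is the same direct estimate under the Laplace integral, and the converse uses the same observation that (ii) forces $\|\lambda^{n+1}R(\lambda,A)^{n+1}\|\leq 1$ together with the Post--Widder inversion formula \cite[Theorem 1.7.7]{abhn}. The only cosmetic difference is that the paper phrases the resolvent powers via the derivatives $R(\lambda,A)^{(n)}$ before citing Post--Widder, whereas you write out the limit formula explicitly; the substance is the same.
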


\begin{proof}
If $\|T(t)\| \leq 1$ for all $t>0$ then for $\lambda >0$ we have
\begin{align*}
\|\lambda R(\lambda, A)x\| & = \Big\| \int_0^\infty \lambda e^{-\lambda t}T(t)x\, dt \Big\| \leq \int_0^\infty\lambda e^{-\lambda t}\|x\|\, dt = \|x\|.
\end{align*}
For the converse, we note that (ii) implies
\[
\frac{1}{n!} \|\lambda^{n+1}R(\lambda, A)^{(n)}\| = \|\lambda^{n+1}R(\lambda, A)^{n+1}\| \leq 1
\]
for all $n \in \CN_0$. Now the Post--Widder inversion formula \cite[Theorem 1.7.7]{abhn} yields $\|T(t)\|\leq 1$ for all $t>0$.
\end{proof}

A semigroup $T$ is called \emph{holomorphic} if it has a holomorphic extension to a sector
\[
\Sigma_\theta \coloneqq \big\{ r e^{i\varphi} : r>0, |\varphi| < \theta\big\}
\]
for some angle $\theta \in (0, \frac{\pi}{2}]$ which is bounded on $\Sigma_\theta\cap \{z \in \CC : |z|\leq 1\}$, see
\cite[Definition 3.7.1]{abhn}. We note that in this case the semigroup law automatically also holds for the holomorphic extension.
We call the semigroup $T$ \emph{bounded holomorphic} if the holomorphic extension is additionally bounded on all of $\Sigma_\theta$.

An operator $A$ generates a holomorphic semigroup if and only if there exists a constant $c \in \CR$ such that $A-c$ generates a bounded holomorphic semigroup. The generators of (bounded) holomorphic semigroups can be characterized by the following \emph{holomorphic estimate}.

\begin{thm}\label{t.holest}
An operator $A$ on $X$ generates a (bounded) holomorphic semigroup if and only if there exists $\omega \in \CR$ ($\omega=0$)
such that $\{ \lambda \in \CC : \Re \lambda >\omega\} \subset \rho (A)$ and
\[
\sup_{\Re\lambda > \omega}\|\lambda R(\lambda, A)\| < \infty.
\]
\end{thm}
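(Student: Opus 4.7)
The plan is to first reduce to the bounded holomorphic case (the general statement follows by replacing $A$ with $A - c$ for a suitable constant $c$), and then prove both implications separately.

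For the forward direction, assume $T$ extends to a bounded holomorphic map on $\Sigma_\theta$ with $\|T(z)\| \leq M$ there. I would use Cauchy's theorem to rotate the Laplace integral defining $R(\lambda,A)$: for $\alpha \in (-\theta,\theta)$ with $\Re(\lambda e^{i\alpha}) > 0$, a standard contour deformation (justified by the bound on $T$ together with decay of $e^{-\lambda z}$ at infinity in the relevant sector) yields
\[
R(\lambda,A) = \int_0^\infty e^{-\lambda e^{i\alpha} s}\, T(e^{i\alpha} s)\, e^{i\alpha}\, ds.
\]
Given $\lambda$ with $\Re \lambda > 0$, one picks $\alpha$ to make $\Re(\lambda e^{i\alpha})$ as large as possible; elementary trigonometry shows $\Re(\lambda e^{i\alpha}) \geq |\lambda| \sin \theta$ is always attainable. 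This yields $\|\lambda R(\lambda,A)\| \leq M/\sin \theta$, which is the desired bound.

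For the converse, the main work is to construct the semigroup from the resolvent data. The first step is to enlarge the domain of the resolvent: from the bound $\|R(\lambda_0,A)\| \leq M/\Re \lambda_0$ on the right half-plane together with the Neumann series
\[
R(\mu,A) = \sum_{n=0}^\infty (\lambda_0 - \mu)^n R(\lambda_0,A)^{n+1},
\]
the resolvent extends analytically to the disk $\{|\mu-\lambda_0| < \Re \lambda_0/M\}$. Varying $\lambda_0$ along the imaginary axis shows that $\rho(A)$ contains a sector $\Sigma_{\pi/2+\delta}$ for some $\delta \in (0,\pi/2)$, on which $\|R(\mu,A)\| \leq C/|\mu|$. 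I would then define
\[
T(z) \coloneqq \frac{1}{2\pi i}\int_\Gamma e^{\lambda z} R(\lambda,A)\,d\lambda,
\]
where $\Gamma$ is the upward-oriented boundary of a slightly smaller sector $\Sigma_{\pi/2+\delta'}$. For $z \in \Sigma_{\delta'}$ the factor $e^{\lambda z}$ decays exponentially along $\Gamma$, so the integral converges absolutely and depends holomorphically on $z$. Boundedness of $T(z)$ on any strictly smaller subsector follows from the change of variable $\lambda \mapsto \lambda/|z|$, which makes the resulting estimate scale-free. The semigroup law $T(z_1)T(z_2) = T(z_1+z_2)$ is proved by applying the resolvent identity to two parallel copies of $\Gamma$ and invoking Cauchy's theorem, and $A$ is identified as the generator by checking that the Laplace transform of $T(t)$ along the positive real axis reproduces $R(\lambda,A)$ for $\Re \lambda$ large.

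The main technical obstacle is orchestrating these construction steps so that every contour integral remains absolutely convergent and every exchange of order of integration is legitimate; the Neumann-series extension, the contour deformations in the proof of the semigroup law, and the identification of the generator all rely on such interchanges. Each individual step is classical, and once the semigroup has been built, the claimed holomorphic resolvent estimate follows at once from the forward direction.
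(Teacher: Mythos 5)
The paper gives no proof of this theorem at all --- it simply cites Corollaries 3.7.12 and 3.7.17 of \cite{abhn} --- so there is nothing internal to compare against; your proposal reconstructs the classical argument underlying that citation (contour rotation of the Laplace transform for the forward implication, the inverse-Laplace contour integral for the converse), and in outline this is the right proof. Two points in your favour: since semigroups in this paper are \emph{not} assumed strongly continuous at $0$, identifying the generator via the Laplace transform along $(0,\infty)$, as you propose, is indeed the correct move; and your trigonometric estimate $\Re(\lambda e^{i\alpha})\geq|\lambda|\sin\theta$ in the forward direction is accurate and yields the stated bound $M/\sin\theta$.

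There is, however, one step in the converse that fails as written. You invoke the bound $\|R(\lambda_0,A)\|\leq M/\Re\lambda_0$ and deduce that the Neumann series converges on the disk $\{|\mu-\lambda_0|<\Re\lambda_0/M\}$, then claim that letting $\lambda_0$ vary near the imaginary axis produces a sector $\Sigma_{\pi/2+\delta}$ inside $\rho(A)$. But every $\mu$ in that disk satisfies $\Re\mu>\Re\lambda_0(1-1/M)\geq 0$ (note $M\geq 1$ automatically, since $\lambda R(\lambda,A)x\to x$ for $x\in D(A)$), so the union of these disks over $\Re\lambda_0>0$ never leaves the open right half-plane and no enlargement of $\rho(A)$ is obtained: the disks shrink to points as $\lambda_0$ approaches $i\CR$. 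The fix is simply to use the bound the hypothesis actually provides, namely $\|R(\lambda_0,A)\|\leq M/|\lambda_0|$, which gives disks of radius $|\lambda_0|/M$ (take $|\lambda_0|/(2M)$ to retain the estimate $\|R(\mu,A)\|\leq 2M/|\mu|$ on them). For $\lambda_0=\eps+i\tau$ with $\eps\downarrow 0$ these disks do cross into $\{\Re\mu<0\}$, and their union contains a sector of half-angle $\pi/2+\arctan\bigl(1/(2M)\bigr)$ on which the resolvent satisfies the scale-invariant bound needed for the contour integral. With that correction, the remainder of your outline --- the definition of $T(z)$ by the contour integral, boundedness via the rescaling $\lambda\mapsto\lambda/|z|$, the semigroup law via the resolvent identity on two shifted contours, and the Laplace-transform identification of the generator (plus the trivial verification that $T(t)x=0$ for all $t$ forces $x=0$) --- is the standard and correct route.
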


\begin{proof}
Corollaries 3.7.12 and 3.7.17 of \cite{abhn}.
\end{proof}

A semigroup $T$ is called \emph{exponentially stable} if it is of type $(M, -\eps)$ for some $\eps>0$. An operator $A$ generates an exponentially stable semigroup if and only if $A+\eps$ generates a semigroup of type $(M, 0)$ for some $M\geq 0$ and $\eps>0$. 
To prove that a holomorphic semigroup $T$ is expontially stable it suffices to prove that the spectrum of its generator $A$ is contained in the open left half-plane. More precisely, we have the following result.

\begin{prop}\label{p.expstable}
Let $A$ be the generator of a holomorphic semigroup $T$. The following are equivalent.
\begin{enumerate}
[(i)]
\item $T$ is exponentially stable.
\item $\Re\lambda < 0$ for all $\lambda \in \sigma (A)$.
\end{enumerate}
\end{prop}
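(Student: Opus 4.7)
The implication (i) $\Rightarrow$ (ii) is the easy direction: if $T$ is of type $(M,-\varepsilon)$, the Laplace transform representation of $R(\lambda,A)$ is valid for $\Re\lambda > -\varepsilon$, placing all of $\sigma(A)$ in $\{\Re\lambda\le -\varepsilon\}$. So I focus on (ii) $\Rightarrow$ (i). The strategy is to verify that $A+\varepsilon$ satisfies the hypotheses of Theorem \ref{t.holest} for some $\varepsilon>0$, i.e.\ $\{\Re\lambda>0\}\subset\rho(A+\varepsilon)$ together with a uniform bound $\sup_{\Re\lambda>0}\|\lambda R(\lambda,A+\varepsilon)\|<\infty$. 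Once this is achieved, $A+\varepsilon$ generates a bounded holomorphic semigroup, whence $T$ is of type $(M,-\varepsilon)$.

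The first key step is a geometric observation about $\sigma(A)$. Since $T$ is holomorphic, after shifting we may write $A=B+c$ with $B$ the generator of a bounded holomorphic semigroup of some angle $\theta>0$. Theorem \ref{t.holest} (applied to $B$) yields that $\rho(B)\supset\Sigma_{\pi/2+\theta'}$ for every $\theta'<\theta$, with a resolvent bound $\|R(\lambda,B)\|\le M/|\lambda|$ there. Translating back, $\sigma(A)$ lies in the complement of the shifted sector $c+\Sigma_{\pi/2+\theta}$, which is a ``left-opening cone'' with vertex at $c$. Elementary trigonometry (writing $\lambda-c=re^{i\varphi}$ with $|\varphi|\ge\pi/2+\theta$) shows that $\Re\lambda\le c-r\sin\theta$, so the intersection of this complement with any half-plane $\{\Re\lambda\ge-\alpha\}$ is bounded. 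Combined with the hypothesis $\sigma(A)\subset\{\Re\lambda<0\}$, the set $\sigma(A)\cap\{\Re\lambda\ge-1\}$ is compact and contained in the open left half-plane. Hence there exists $\varepsilon>0$ with $\sigma(A)\subset\{\Re\lambda\le-2\varepsilon\}$.

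The second step is the resolvent estimate. For this $\varepsilon$, $\{\Re\mu>-\varepsilon\}\subset\rho(A)$, so the task reduces to bounding $(\mu+\varepsilon)R(\mu,A)$ uniformly on $\{\Re\mu>-\varepsilon\}$. I split this set into a large and a bounded piece. For the bounded piece $\{\Re\mu\ge-\varepsilon,\ |\mu|\le R\}$, this is a compact subset of $\rho(A)$ (by the spectral bound of Step 1), and continuity of the resolvent gives the uniform bound automatically. For the piece with $|\mu|>R$, I exploit that as $|\mu|\to\infty$, the constraint $\Re\mu>-\varepsilon$ forces $|\arg\mu|<\pi/2+o(1)$, and similarly $\arg(\mu-c)\to\arg\mu$; so for $R$ sufficiently large, $\{\Re\mu>-\varepsilon,\ |\mu|>R\}\subset c+\Sigma_{\pi/2+\theta'}$ for a fixed $\theta'<\theta$. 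On this sector the holomorphic estimate gives $\|R(\mu,A)\|\le M/|\mu-c|$, and $|\mu+\varepsilon|/|\mu-c|$ is uniformly bounded for $|\mu|$ large. Putting both pieces together supplies the estimate, and Theorem \ref{t.holest} concludes the proof.

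The main obstacle is the geometric step — verifying that the shifted left-cone containing $\sigma(A)$ together with the spectral strict-inequality really gives a uniform gap from the imaginary axis, and then that the same gap makes $\{\Re\mu>-\varepsilon\}$ fit, outside a compact set, into the holomorphic sector $c+\Sigma_{\pi/2+\theta'}$. Everything else is a standard combination of compactness and the Laplace transform representation of the resolvent.
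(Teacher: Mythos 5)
Your proof is correct and follows essentially the same route as the paper: the resolvent bound on a shifted sector $c+\Sigma_{\theta}$ with $\theta>\frac{\pi}{2}$ coming from holomorphy, a compactness argument on the remaining bounded piece of the half-plane $\{\Re\mu\geq-\eps\}$, and the holomorphic estimate of Theorem \ref{t.holest} applied to $A+\eps$. You merely spell out the geometric and compactness details the paper leaves implicit (and your choice of $\eps$ via compactness of $\sigma(A)\cap\{\Re\lambda\geq-1\}$ in effect repairs the paper's typo ``$K_\eps\subset\sigma(A)$'', which should read $K_\eps\subset\rho(A)$).
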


\begin{proof}
Assume (ii) and let $\omega$ be as in Theorem \ref{t.holest}. It follows from \cite[Theorem 3.7.11]{abhn}, that $\|\lambda R(\lambda, A)\|$ is bounded on the set
$\omega + \Sigma_\theta$ for a suitable $\theta \in (\frac{\pi}{2}, \pi)$. For $\eps>0$ the set $K_\eps \coloneqq \{ \lambda : \Re\lambda \geq -\eps\} \setminus (\omega +\Sigma_\theta)$ is compact. By (ii) we have $K_\eps \subset \rho (A)$ for a suitable $\eps>0$.  It follows that $\sup_{\lambda \in K_\eps}\|\lambda R(\lambda, A)\|< \infty$. Alltogether, $\sup_{\Re \lambda \geq - \eps}\|\lambda R(\lambda, A)\| < \infty$. This implies that $A+\eps$ generates a bounded holomorphic semigroup, whence $A$ generates an exponentially stable semigroup.

The converse is obvious.
\end{proof}

In this article, we are concerned with semigroups on the Banach spaces $X=L^\infty(\Omega)$ or $X= C(\overline{\Omega})$
or $X= C_0(\Omega) \coloneqq \{ u \in C(\overline{\Omega}): u|_{\partial \Omega} =0 \}$ where $\Omega \subset \CR^d$ is open and bounded. These spaces are Banach lattices in their natural ordering (see \cite[BI, CI]{schreibmaschine} for more details). From the point of view of applications to Markov processes it is important to consider positive semigroups. We recall that a semigroup $T$ on a Banach lattice $X$ is called \emph{positive} if $T(t)$ is a positive operator for all $t>0$. The latter means that if $u\geq 0$ then also $T(t)u\geq 0$.

We now characterize generators of positive semigroups.

\begin{prop}
\label{p.possg}
Let $A$ be the generator of a semigroup $T$ on a Banach lattice $X$. The following are equivalent.
\begin{enumerate}
[(i)]
\item $T$ is positive.
\item There exists some $\lambda_0 \in \CR$ such that $(\lambda_0, \infty) \subset \rho (A)$ such that $R(\lambda, A) \geq 0$ for all
$\lambda > \lambda_0$.
\end{enumerate}
\end{prop}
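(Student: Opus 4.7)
The plan is to establish the two implications via the Laplace transform in one direction and the Post--Widder inversion formula in the other, mirroring the approach used in Proposition \ref{p.contr}. The key fact I will rely on, in both directions, is that the positive cone of a Banach lattice is closed in the norm topology, so that positivity is preserved under pointwise limits of bounded operators.

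For (i)$\Rightarrow$(ii), I would take $\lambda_0 = \omega$, where $(M,\omega)$ is the type of $T$. For any $\lambda > \omega$ and $x \in X$ with $x \geq 0$, positivity of $T(t)$ gives $e^{-\lambda t}T(t)x \geq 0$ for every $t > 0$. The Laplace transform representation
\[
R(\lambda,A)x = \int_0^\infty e^{-\lambda t} T(t)x \dx t
\]
(valid by the defining property of the generator recalled after the definition of semigroup) is a norm-convergent improper integral of positive elements, hence lies in the closed positive cone. Thus $R(\lambda,A)x \geq 0$, i.e.\ $R(\lambda,A) \geq 0$.

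For the converse (ii)$\Rightarrow$(i), fix $t>0$ and $x \geq 0$. For $n$ large enough that $n/t > \lambda_0$, the operator
\[
\frac{1}{n!}\left(\frac{n}{t}\right)^{n+1} R\!\left(\frac{n}{t},A\right)^{n+1}
\]
is a positive scalar multiple of a product of positive operators $R(n/t,A)$, hence positive, so it maps $x$ into the positive cone. The Post--Widder inversion formula \cite[Theorem 1.7.7]{abhn} (the same one invoked in the proof of Proposition \ref{p.contr}) yields
\[
T(t)x = \lim_{n\to\infty} \frac{1}{n!}\left(\frac{n}{t}\right)^{n+1} R\!\left(\frac{n}{t},A\right)^{n} x,
\]
and since the positive cone is norm-closed, the limit is again positive. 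Therefore $T(t)$ is positive for each $t>0$.

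The only delicate point I foresee is making sure the Laplace transform in the first implication can indeed be interpreted as a limit of Riemann sums whose positivity is preserved; this follows from the fact that for a semigroup of type $(M,\omega)$ and $\lambda > \omega$ the integrand $t \mapsto e^{-\lambda t}T(t)x$ is strongly continuous on $(0,\infty)$ and absolutely Bochner integrable, so the integral is a norm limit of positive Riemann sums and hence belongs to the closed positive cone. No novel argument is required; this is a standard characterization once the Post--Widder machinery is available, which the paper has already set up.
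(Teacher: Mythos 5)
Your proposal is correct and follows essentially the same route as the paper: the Laplace transform representation of the resolvent together with closedness of the positive cone for (i)$\Rightarrow$(ii), and the Post--Widder inversion formula applied to positive powers of the resolvent for (ii)$\Rightarrow$(i). The only blemish is a cosmetic inconsistency in your Post--Widder display (the exponent switches from $n+1$ to $n$ and a superfluous $\frac{1}{n!}$ appears; the correct expression is $\left(\frac{n}{t}\right)^{n+1}R\left(\frac{n}{t},A\right)^{n+1}x$), which does not affect the argument since every variant is still a positive multiple of a power of a positive operator.
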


\begin{proof}
Let $T$ be of type $(M,\omega)$.
If $T$ is positive, then for $u\geq 0$ also $e^{-\lambda t}T(t)u\geq 0$ for all $t>0$ and $\lambda > \omega$. 
As the positive cone is closed, 
$R(\lambda, A)u = \int_0^\infty e^{-\lambda t}T(t)u\, dt \geq 0$ for all $\lambda > \omega$. The converse follows from the 
Post--Widder inversion formula \cite[Theorem 1.7.7]{abhn}, as
\[
(-1)^n\frac{1}{n!} R(\lambda, A)^{(n)}u = R(\lambda, A)^{n+1}u \geq 0
\]
whenever $u\geq 0$.
\end{proof}

Basically the same argument yields the following result concerning domination of semigroups.

\begin{prop}\label{p.dom}
Let $S$ and $T$ be positive semigroups on a Banach lattice $X$ with generators $B$ and $A$ respectively. The following are equivalent.
\begin{enumerate}
[(i)]
\item $S(t)\leq T(t)$ for all $t>0$;
\item There exists some $\lambda_0$ in $\CR$ with $(\lambda_0,\infty) \subset \rho (A)\cap\rho (B)$ and 
$R(\lambda, B) \leq R(\lambda, A)$ for all $\lambda>\lambda_0$.
\end{enumerate}
\end{prop}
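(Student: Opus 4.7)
The plan is to mimic the proof of Proposition \ref{p.possg} essentially verbatim, replacing scalar positivity of individual resolvents by comparison between two resolvents, and to invoke Post--Widder in both directions.

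For $(i) \Rightarrow (ii)$, I would pick $\omega \in \CR$ so that both $S$ and $T$ are of type $(M,\omega)$; then $(\omega, \infty) \subset \rho(A) \cap \rho(B)$. Given $u \geq 0$ and $\lambda > \omega$, the operator $e^{-\lambda t}(T(t) - S(t))$ acts on $u$ as a positive element for each $t > 0$ by hypothesis. Integrating and using that the positive cone of $X$ is closed gives
\[
R(\lambda, A)u - R(\lambda, B)u = \int_0^\infty e^{-\lambda t}\bigl(T(t)u - S(t)u\bigr)\dx t \geq 0,
\]
so $R(\lambda, B) \leq R(\lambda, A)$ for all $\lambda > \omega$, and we take $\lambda_0 = \omega$.

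For $(ii) \Rightarrow (i)$, the key observation is that $0 \leq R(\lambda, B) \leq R(\lambda, A)$ (the left inequality coming from Proposition \ref{p.possg} applied to $S$) implies $R(\lambda,B)^{n+1} \leq R(\lambda,A)^{n+1}$ for every $n \in \CN_0$. This I would prove by induction: writing
\[
R(\lambda,A)^{n+1} - R(\lambda,B)^{n+1} = R(\lambda,A)\bigl(R(\lambda,A)^n - R(\lambda,B)^n\bigr) + \bigl(R(\lambda,A) - R(\lambda,B)\bigr)R(\lambda,B)^n
\]
makes it clear that the right-hand side is positive on positive elements, because $R(\lambda,A)$, $R(\lambda,B)$, and their difference are all positive operators and the inductive hypothesis handles the first factor.

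Now, exactly as in the proof of Proposition \ref{p.possg}, I would apply the Post--Widder inversion formula \cite[Theorem 1.7.7]{abhn}: for $u \geq 0$ and $t > 0$,
\[
T(t)u - S(t)u = \lim_{n\to\infty}\Bigl(\frac{n}{t}\Bigr)^{n+1}\!\Bigl(R(n/t,A)^{n+1}u - R(n/t,B)^{n+1}u\Bigr),
\]
and each term on the right is positive for $n$ large enough that $n/t > \lambda_0$. Closedness of the positive cone yields $S(t) \leq T(t)$ for every $t > 0$. No step looks like a serious obstacle; the one point worth being careful about is the elementary induction showing that domination of resolvents is preserved under taking positive integer powers, and choosing a common $\lambda_0$ exceeding the exponential growth bounds of both semigroups so that the Laplace transform representation is valid.
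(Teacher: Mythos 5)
Your proof is correct and is exactly the argument the paper intends: the paper gives no separate proof of Proposition \ref{p.dom}, remarking only that ``basically the same argument'' as in Proposition \ref{p.possg} (Laplace transform representation for one direction, Post--Widder inversion for the other) applies. You have merely made explicit the one detail the paper leaves implicit, namely the induction showing that $0\leq R(\lambda,B)\leq R(\lambda,A)$ implies $R(\lambda,B)^{n+1}\leq R(\lambda,A)^{n+1}$, and that step is carried out correctly.
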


Now let $T$ be a positive semigroup on a Banach lattice $X$ and let $A$ be its generator. The \emph{spectral bound} $s(A)$ of 
$A$ is defined as
\[
s(A) \coloneqq \sup \{ \Re\lambda: \lambda \in \sigma (A)\}.
\]
In general, $-\infty \leq s(A) < \infty$. However, if $s(A)>-\infty$, then $s(A) \in \sigma (A)$ by \cite[Proposition 3.11.2]{abhn}.
We now obtain the following spectral criterion for exponential stability.

\begin{prop}\label{p.posstable}
Let $T$ be a positive, holomorphic semigroup on a Banach lattice $X$ with generator $A$. If $[0,\infty) \subset \rho (A)$, then 
$T$ is exponentially stable.
\end{prop}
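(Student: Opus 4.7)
The plan is to reduce the claim to Proposition \ref{p.expstable} by showing that the spectrum $\sigma(A)$ is contained in the open left half-plane. By Proposition \ref{p.expstable}, this is equivalent to exponential stability for a holomorphic semigroup, so this reduction solves the problem.

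The key input is the positivity of $T$, which via the cited result \cite[Proposition 3.11.2]{abhn} forces the spectral bound $s(A) = \sup\{\Re\lambda : \lambda \in \sigma(A)\}$ to be attained: whenever $s(A) > -\infty$, one has $s(A) \in \sigma(A)$. I would now distinguish two cases. If $s(A) = -\infty$, then $\sigma(A) = \emptyset$ and there is nothing to check — every $\lambda \in \sigma(A)$ vacuously satisfies $\Re\lambda < 0$, so Proposition \ref{p.expstable} applies. If $s(A) > -\infty$, then $s(A) \in \sigma(A)$, while by hypothesis $[0,\infty) \subset \rho(A)$, so $s(A) \notin [0,\infty)$. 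Since $s(A)$ is real, this forces $s(A) < 0$, and hence $\Re\lambda \leq s(A) < 0$ for every $\lambda \in \sigma(A)$. Applying Proposition \ref{p.expstable} a second time yields exponential stability.

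There is essentially no obstacle here: the entire argument is a short synthesis of Proposition \ref{p.expstable} with the fact that for positive semigroups the spectral bound belongs to the spectrum. The only subtlety is remembering to handle the (degenerate) case $\sigma(A) = \emptyset$ separately, since the statement $s(A) \in \sigma(A)$ is only asserted under the assumption $s(A) > -\infty$.
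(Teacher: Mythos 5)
Your proof is correct and follows exactly the paper's argument: invoke $s(A)\in\sigma(A)$ for positive semigroups (via \cite[Proposition 3.11.2]{abhn}) to conclude $s(A)<0$ from $[0,\infty)\subset\rho(A)$, then apply Proposition \ref{p.expstable}. The only difference is that you spell out the degenerate case $\sigma(A)=\emptyset$, which the paper handles implicitly with the phrase ``when $s(A)>-\infty$''.
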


\begin{proof}
Since $s(A) \in \sigma (A)$ when $s(A)>-\infty$, it follows from our assumption that $s(A)<0$. Now Proposition \ref{p.expstable}
yields exponential stability.
\end{proof}

\section{Some properties of elliptic operators}

We start by recalling the main definitions used in Hypothesis \ref{hyp1}.

A bounded, open set $\Omega\subset\CR^d$ is called \emph{Dirichlet-regular} (or \emph{Wiener-regular}), if for each $\varphi\in C(\partial\Omega)$ there exists a (necessarily unique) function $u\in C^2(\Omega)\cap C(\overline\Omega)$ such that $\Delta u=0$ and $u|_{\partial\Omega}=\varphi$. In other words, $\Omega$ is Dirichlet regular if and only if the classical Dirichlet problem is well-posed.

Dirichlet regularity is a very weak notion of regularity which by Wiener's famous result \cite[§2.9]{gt} can be characterized by a capacity condition.
Examples of Dirichlet regular domains include those with Lipschitz boundary (or more general those sets which satisfy the uniform  exterior cone condition), all open subsets of $\CR$ and all simply connected open subsets of $\CR^2$. We refer to \cite{dl} for the proof of these results and to \cite{ad08} for further information on the Dirichlet problem; the proof of the two-dimensional result can be found in \cite[Chapter 21]{con}

A function $g: \overline{\Omega} \to \CR$ is called \emph{Dini-continuous} if the modulus of continuity
\[
\omega_g(t) \coloneqq \sup_{|x-y| \leq t}|g(x)-g(y)|
\]
satisfies
\[
\int_0^1\frac{\omega_g(t)}{t}\, dt < \infty.
\]
We note that in particular every H\"older continuous function on $\overline{\Omega}$ is Dini continuous.\smallskip

We now start our discussion of the differential operator $\cA$ with the following complex version of the maximum principle from \cite[Lemma 4.2]{as14}. Recall the definition of $W(\Omega)$ from \eqref{eq.w}.

\begin{lem}[Complex maximum principle]
	\label{l.localmax}
	Let $B=B(x_0,r)\Subset\Omega$ be a ball with center $x_0$ and radius $r>0$. Let $u\in W(\Omega)$ be a complex-valued function such that $\mathscr A u\in C(B)$. Assume that $|u(x)|\leq|u(x_0)|$ for all $x\in B$. Then
	\[
		\Re[\overline{u(x_0)}\mathscr A u(x_0)]\leq 0.
	\]
\end{lem}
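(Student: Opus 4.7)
The plan is to reduce the complex statement to Bony's maximum principle applied to the real part of a suitably normalised $u$. If $u(x_0)=0$, the bound $|u(x)|\le|u(x_0)|$ forces $u\equiv 0$ on $B$ and both sides of the desired inequality vanish. Otherwise, set
\[
v\coloneqq\frac{\overline{u(x_0)}}{|u(x_0)|^2}\,u.
\]
Then $v\in W(\Omega)$, $\cA v\in C(B)$ by linearity of $\cA$, $v(x_0)=1$ and $|v|\le 1$ on $B$, while a direct calculation gives $\Re[\overline{u(x_0)}\cA u(x_0)]=|u(x_0)|^2\Re\cA v(x_0)$. It therefore suffices to prove $\Re\cA v(x_0)\le 0$.

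Decompose $v=f+ig$ into real and imaginary parts. The pointwise inequality $f^2+g^2\le 1=f(x_0)^2$ forces $f\le f(x_0)$ on $B$, so $f$ attains an interior maximum at $x_0$. Since $W^{2,p}_{\mathrm{loc}}\hookrightarrow C^1$ for $p>d$, the function $f$ is continuously differentiable near $x_0$ and therefore $\nabla f(x_0)=0$. Because the coefficients $a_{ij},b_j,c_0$ are real, $\cA f=\Re\cA v$ holds almost everywhere in $B$; the continuity hypothesis $\cA v\in C(B)$ then identifies the continuous representative of $\cA f$, whose value at $x_0$ equals $\Re\cA v(x_0)$.

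I next invoke Bony's maximum principle applied to $f\in W^{2,d}_{\mathrm{loc}}$ at its local maximum $x_0$: this produces a sequence $x_n\to x_0$ in $B$ at which the pointwise identity
\[
\cA f(x_n)=\sum_{i,j=1}^d a_{ij}(x_n)D_iD_jf(x_n)+\sum_{j=1}^d b_j(x_n)D_jf(x_n)+c_0(x_n)f(x_n)
\]
holds, with $b_j(x_n),c_0(x_n)$ interpreted at Lebesgue points, and such that
\[
\liminf_{n\to\infty}\sum_{i,j=1}^d a_{ij}(x_n)D_iD_jf(x_n)\le 0.
\]
Along this sequence $\sum_j b_j(x_n)D_jf(x_n)\to 0$ because $\|b_j\|_\infty<\infty$ and $\nabla f$ is continuous at $x_0$ with value zero, while $c_0(x_n)f(x_n)\le 0$ for $n$ large since $c_0\le 0$ a.e.\ and $f(x_n)\to 1>0$. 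Combining these facts with the continuity of $\cA f$ at $x_0$ yields $\Re\cA v(x_0)=\lim_n\cA f(x_n)\le 0$, which is the desired inequality.

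The only delicate point is the construction of the sequence $(x_n)$, which must simultaneously lie in the full-measure set on which the $L^\infty$ data and the weak second derivatives are pointwise meaningful and on which the a.e.\ formula for $\cA f$ is valid, while also realising the essential limit inferior delivered by Bony's principle. This is a routine measure-theoretic selection rather than a substantive obstacle; the genuinely novel step is the reduction via $v=\overline{u(x_0)}u/|u(x_0)|^2$ from the modulus bound $|u|\le|u(x_0)|$ to a classical-type interior maximum for $\Re v$.
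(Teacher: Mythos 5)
The paper does not prove this lemma itself but imports it verbatim from \cite[Lemma 4.2]{as14}, so there is no in-paper argument to compare against. Your proof is correct and is essentially the argument of the cited source: rotate and normalise so that the real part $f=\Re v$ has an interior maximum with positive value at $x_0$, use $W^{2,p}_{\mathrm{loc}}\hookrightarrow C^1$ ($p>d$) to get $\nabla f(x_0)=0$, and apply Bony's maximum principle to handle the second-order term, with $c_0\le 0$ absorbing the zeroth-order term; the measure-theoretic selection of the sequence $(x_n)$ inside the relevant full-measure set is indeed routine.
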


We note that a priori $\cA u$ is an element of $L^p_{\mathrm{loc}}(\Omega)$ for all $1< p < \infty$, since $u$ is merely assumed to be in $W(\Omega)$. The hypothesis $\cA u \in C(B)$ in Lemma \ref{l.localmax} means that $\cA u$ coincides almost everywhere on $B$ with a continuous function $g$. In the conclusion of the lemma, 
$\cA u(x_0)$ is to be understood as $g(x_0)$.\smallskip

We can now prove that certain harmonic functions attain their maximum on the boundary.

\begin{lem}
	\label{l.maxprinciple2}
	Let $\Re\lambda>0$, $M\geq0$. Let $u\in C(\overline\Omega)\cap W(\Omega)$ such that $\lambda u -\mathscr Au=0$.
	
	If $|u(x)|\leq M$ on $\partial\Omega$, then $|u(x)|\leq M$  for all $x \in \overline{\Omega}$. If $M>0$, then we even have
	$|u(x)|<M$ for all $x \in \Omega$.
\end{lem}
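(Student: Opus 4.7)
The plan is to reduce both conclusions to Lemma \ref{l.localmax} by a standard argument: examine a point where $|u|$ attains its maximum and use the equation $\mathscr A u = \lambda u$ to get a contradiction unless $u$ vanishes there.

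First, I would verify the regularity hypothesis needed to invoke Lemma \ref{l.localmax}. Since $u \in C(\overline\Omega)$, the function $\lambda u$ is continuous on $\Omega$; because $\mathscr A u = \lambda u$ holds almost everywhere (as $u \in W(\Omega)$), it follows that $\mathscr A u$ agrees a.e. on every ball $B \Subset \Omega$ with the continuous function $\lambda u$. In particular $\mathscr A u \in C(B)$ for every such $B$, and $\mathscr A u(x) = \lambda u(x)$ pointwise.

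Next comes the key observation. Suppose $x_0 \in \Omega$ satisfies $|u(x_0)| = \max_{\overline\Omega}|u|$. Pick a ball $B = B(x_0,r) \Subset \Omega$. Then $|u(x)| \leq |u(x_0)|$ for $x \in B$, so Lemma \ref{l.localmax} gives
\[
0 \geq \Re\bigl[\,\overline{u(x_0)}\,\mathscr A u(x_0)\bigr] = \Re\bigl[\,\overline{u(x_0)}\,\lambda u(x_0)\bigr] = (\Re\lambda)\,|u(x_0)|^2.
\]
Since $\Re\lambda > 0$, this forces $u(x_0) = 0$, and hence $\max_{\overline\Omega}|u| = 0$, i.e.\ $u \equiv 0$.

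From this the two assertions follow readily. Set $M' \coloneqq \max_{\overline\Omega}|u|$, which is attained by compactness. Either every maximizing point lies on $\partial\Omega$, in which case $M' \leq M$ by hypothesis, or some maximizer lies in $\Omega$, in which case the key observation yields $u \equiv 0$ and $M' = 0 \leq M$. In both cases $|u(x)| \leq M$ on $\overline\Omega$. For the strict inequality when $M > 0$, suppose for contradiction that $|u(x_1)| = M$ for some $x_1 \in \Omega$; then $x_1$ is an interior maximizer of $|u|$, so the key observation forces $u(x_1) = 0$, contradicting $M > 0$. Hence $|u(x)| < M$ throughout $\Omega$.

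No step strikes me as an obstacle; the only mildly delicate point is the pointwise interpretation of $\mathscr A u$ at the maximizer, which is handled precisely by the remark following Lemma \ref{l.localmax} together with the continuity of $\lambda u$.
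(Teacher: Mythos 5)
Your proof is correct and follows essentially the same route as the paper: locate an interior maximizer of $|u|$, apply Lemma \ref{l.localmax} together with $\mathscr A u=\lambda u$ to get $\Re\lambda\,|u(x_0)|^2\leq 0$, and conclude $u\equiv 0$ unless the maximum is attained only on the boundary. Your explicit check that $\mathscr A u$ coincides a.e.\ with the continuous function $\lambda u$ (so that Lemma \ref{l.localmax} applies in the pointwise sense) is a detail the paper leaves implicit, but the argument is the same.
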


\begin{proof}
	Suppose that $|u|$ attains its global maximum at a point $x_0\in\Omega$ with $|u(x_0)|\neq 0$. By Lemma \ref{l.localmax} $\Re[\overline{u(x_0)} \mathscr A u(x_0)]\leq0$. Since $\lambda u=\mathscr Au$ it follows that
	\[
		\Re\lambda|u(x_0)|^2=\Re[\overline{u(x_0)} \mathscr A u(x_0)]\leq0
	\]
	and thus $|u(x_0)|\leq 0$. This implies that $u\equiv0$, a contradiction.
\end{proof}

It is remarkable (and important for the rest of this article) that the notion of Dirichlet regularity, even though originally phrased in terms of the Laplace operator, is also sufficient for the well-posedness of the Poisson equation with respect to the 
elliptic operator $\mathscr{A}$. 

More precisely, we have the following result, in which we choose $p=d$ to apply Aleksandrov's maximum principle. The proof is 
based on \cite{as14}, where the Poisson problem is studied. The proofs in \cite{as14} use classical results from \cite{gt} and in particular results due to Krylov \cite{k67}.

\begin{prop}
	\label{p.wellposed}
	Let $\lambda\in\CC$ with $\Re \lambda\geq0$ be given. 
	Then for each $f\in L^d(\Omega)$, $\varphi\in C(\partial\Omega)$ there exists a unique $u\in C(\overline\Omega)\cap W^{2,d}_\mathrm{loc}(\Omega)$ such that 
	\begin{equation}
		\label{eq.dirichlet}
		\begin{aligned}
			\lambda u-\mathscr A u&=f\\
			u|_{\partial\Omega}&=\varphi.
		\end{aligned}
	\end{equation}
	Moreover, if $\lambda\in\CR$, $\lambda\geq0$, $f\geq0$ on $\Omega$ and $\varphi\geq0$ on $\partial\Omega$, then $u\geq0$ on $\overline\Omega$. Finally, if $f \in L^\infty_{\mathrm{loc}}(\Omega)$ then $u \in W(\Omega)$ by elliptic regularity.
\end{prop}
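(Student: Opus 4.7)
The plan is to handle the real case $\lambda\geq 0$ directly via \cite{as14}, and then extend to complex $\lambda$ with $\Re\lambda\geq 0$ by a Fredholm/maximum-principle argument on the Dirichlet realization of $\mathscr A$ on $C_0(\Omega)$.

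For real $\lambda\geq 0$, I take the $\lambda=0$ case as input from \cite{as14}: under Hypothesis~\ref{hyp1}, for every $f\in L^d(\Omega)$ and $\varphi\in C(\partial\Omega)$ there is a unique $u\in C(\overline\Omega)\cap W^{2,d}_\mathrm{loc}(\Omega)$ with $-\mathscr A u=f$ and $u|_{\partial\Omega}=\varphi$, with positivity when $f\geq 0$ and $\varphi\geq 0$, and satisfying an Aleksandrov bound $\|u\|_\infty\leq\|\varphi\|_\infty+C\|f\|_{L^d}$. For real $\lambda>0$, I absorb $\lambda$ into the zero-order coefficient: the perturbed operator $\mathscr A-\lambda$ has coefficient $c_0-\lambda\leq 0$, so Hypothesis~\ref{hyp1} still holds, and the same \cite{as14} result applied to $\mathscr A-\lambda$ furnishes the required solution, together with positivity.

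For complex $\lambda$, uniqueness is immediate: the difference $u$ of two solutions satisfies $\lambda u-\mathscr A u=0$, $u|_{\partial\Omega}=0$, and Lemma~\ref{l.maxprinciple2} with $M=0$ forces $u\equiv 0$ when $\Re\lambda>0$. For existence, I reduce to homogeneous Dirichlet data by subtracting the $\mathscr A$-harmonic extension $h$ of $\varphi$ (the real $\lambda=0$, $f=0$ case handled above); then $v\coloneqq u-h$ must satisfy $\lambda v-\mathscr A v=f-\lambda h\in L^d(\Omega)$ with $v|_{\partial\Omega}=0$. Introduce the Dirichlet realization $B$ on $C_0(\Omega)$ by $Bu=\mathscr A u$, $D(B)=\{u\in C_0(\Omega)\cap W^{2,d}_\mathrm{loc}(\Omega):\mathscr A u\in C_0(\Omega)\}$, extended to $L^d$-inhomogeneities via Aleksandrov. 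The real case gives $0\in\rho(B)$; $B^{-1}\colon L^d(\Omega)\to C_0(\Omega)$ is compact by interior Krylov--Safonov H\"older regularity combined with Arzel\`a--Ascoli, and Lemma~\ref{l.maxprinciple2} makes $\lambda-B$ injective for $\Re\lambda>0$, so the Fredholm alternative yields $\{\Re\lambda>0\}\subset\rho(B)$. Combining the interior estimate from Lemma~\ref{l.maxprinciple2} with Aleksandrov's bound produces a uniform estimate $\|(\lambda-B)^{-1}g\|_\infty\leq C\|g\|_{L^d}$ for $\Re\lambda\geq 0$, and a limiting argument $\Re\lambda\to 0^+$ extends existence and uniqueness to the imaginary axis.

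The regularity addendum is immediate: if $f\in L^\infty_\mathrm{loc}(\Omega)$ then $\mathscr A u=\lambda u-f\in L^\infty_\mathrm{loc}(\Omega)$, whence \cite[Lemma 9.16]{gt} upgrades $u\in W^{2,d}_\mathrm{loc}(\Omega)$ to $u\in W(\Omega)$. The main obstacle is the uniform resolvent estimate across the closed right half-plane: Lemma~\ref{l.maxprinciple2} only controls $|u|$ at interior maxima when $\Re\lambda>0$, so producing a single bound uniform up to the imaginary axis requires carefully interleaving the interior maximum principle with Aleksandrov's global estimate and a limiting argument.
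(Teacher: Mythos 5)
Your treatment of real $\lambda\geq 0$ (quote the $\lambda=0$ case from \cite{as14} and absorb $\lambda$ into the zero-order coefficient $c_0$) and the final regularity addendum via \cite[Lemma 9.16]{gt} coincide with the paper's argument. The extension to complex $\lambda$ with $\Re\lambda\geq 0$ is where your proposal has two genuine gaps. First, the compactness of the Dirichlet solution operator $G=B^{-1}\colon L^d(\Omega)\to C_0(\Omega)$ does not follow from interior Krylov--Safonov estimates plus Arzel\`a--Ascoli: interior estimates give equicontinuity only on sets $U\Subset\Omega$, whereas relative compactness in $C(\overline\Omega)$ requires a \emph{uniform} modulus of decay of $Gg_n$ near $\partial\Omega$, which under mere Dirichlet regularity is a delicate barrier-type statement (the paper explicitly points out in the introduction that the H\"older-space compactness route breaks down under Dirichlet regularity alone, and it deliberately postpones all compactness claims until after the strong Feller property is available). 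Second, your passage to the imaginary axis is asserted rather than proved: the only quantitative interior information you invoke is $\Re[\overline{u(x_0)}\mathscr A u(x_0)]\leq 0$ at a maximum of $|u|$, which for $\lambda u-\mathscr A u=g$ yields at best $\Re\lambda\,|u(x_0)|\leq |g(x_0)|$ and hence degenerates as $\Re\lambda\to 0^+$; Aleksandrov's estimate is a statement about real solutions of real equations and does not directly control the coupled real--imaginary system when $\lambda\notin\CR$. Moreover, uniqueness for purely imaginary $\lambda\neq 0$ is not covered by Lemma \ref{l.maxprinciple2} (which requires $\Re\lambda>0$) and cannot be obtained by ``taking a limit'' of solutions.

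The paper closes exactly this gap by a different and compactness-free device: it packages the problem into the Poisson operator $\mathscr P(u,0)=(\mathscr A u,-u|_{\partial\Omega})$ on $L^d(\Omega)\oplus C(\partial\Omega)$, observes that $(\lambda-\mathscr P)^{-1}$ exists and is \emph{positive} for all real $\lambda\geq 0$ (Aleksandrov), and then invokes the theory of resolvent positive operators \cite[Proposition 3.11.2]{abhn}: since $s(\mathscr P)<0$, the entire closed right half-plane lies in $\rho(\mathscr P)$, which delivers existence, uniqueness and the uniform bound for all complex $\lambda$ with $\Re\lambda\geq 0$ in one stroke. If you want to keep your Fredholm architecture you would still need to (i) actually prove boundary equicontinuity of $G$ on bounded sets of $L^d(\Omega)$, and (ii) replace the limiting argument by a domination estimate of the form $|R(\lambda)f|\leq R(\Re\lambda)|f|$, which is precisely what resolvent positivity provides; as written, neither step is established.
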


\begin{proof}
In the proof we use the \emph{Poisson operator} $\mathscr{P}$ on $L^d(\Omega)\oplus C(\partial \Omega)$, defind by
\begin{align*}
	D(\mathscr{P})&:=\{(u,0):\ u\in W^{2,d}_\mathrm{loc}(\Omega)\cap C(\overline{\Omega}),\, \cA u\in L^d(\Omega)\}\\
	\mathscr{P}(u,0)&:=(\cA u,-u|_{\partial\Omega}).
\end{align*}
It follows from \cite[Corollary 3.4]{as14} and the remarks after that corollary,
 that the operator $\mathscr{P}$ is bijective. Replacing for $\lambda >0$ the operator
$\cA$ with $\cA - \lambda$, we see that also $(\lambda -\mathscr{P})$ is bijective for $\lambda >0$. It is a consequence 
of Aleksandrov's maximum principle \cite[Theorem 9.1]{gt} (see also \cite[Theorem A.1]{as14}) that the inverse $(\lambda - \mathscr{P})^{-1}$ 
is a positive operator. Thus $\mathscr{P}$ is resolvent positive in the sense of \cite[Definition 3.11.1]{abhn} and $s(A)<0$. It now follows from \cite[Proposition 3.11.2]{abhn} that $\lambda-\mathscr{P}$ is invertible also for \emph{complex} $\lambda$ with
$\Re\lambda\geq0$. From \cite[Lemma 9.16]{gt} we infer that $u \in W(\Omega)$ whenever $u \in W^{2,p}_\mathrm{loc}(\Omega)$
for some $1<p<\infty$ and $\cA u  \in L^\infty_{\mathrm{loc}}$. This proves the last assertion.
\end{proof}

We note the following result on interior regularity.

\begin{prop}
	\label{p.local-regularity}
	Let $\Omega$ be Dirichlet regular, $U\Subset\Omega$ and $\Re \lambda \geq 0$. 
	Then there exists a constant $C=C(U) \geq0$ such that for all $f \in L^\infty(\Omega)$ and $\varphi \in C(\partial \Omega)$
	the solution $u$ of \eqref{eq.dirichlet} satisfies the estimate
	\[
		\|u\|_{C^1(\overline{U})}\leq C(\|f\|_{L^\infty(\Omega)}+\|\varphi\|_{C(\partial\Omega)}).
	\]
\end{prop}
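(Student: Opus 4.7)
The plan is to combine a global sup-norm bound for $u$ with the classical interior $W^{2,p}$ regularity theory for elliptic equations in non-divergence form, followed by a Sobolev embedding. The key point is that $U$ is compactly contained in $\Omega$, so we may freely use \emph{interior} estimates which only require the coefficients to be continuous in a neighborhood of $\overline U$ (Hypothesis \ref{hyp1}); no regularity of $\partial\Omega$ beyond Dirichlet regularity enters below the sup-norm step.

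First I would derive an auxiliary $L^\infty$-bound of the form
\[
\|u\|_{L^\infty(\Omega)}\leq C_0\bigl(\|f\|_{L^\infty(\Omega)}+\|\varphi\|_{C(\partial\Omega)}\bigr),
\]
with $C_0$ depending on $\lambda$, $\Omega$, and the coefficients but not on $f$ or $\varphi$. For real $\lambda\ge 0$ this is immediate from Aleksandrov's maximum principle \cite[Theorem 9.1]{gt} (already invoked in the proof of Proposition \ref{p.wellposed}), applied to real and imaginary parts of $u$ separately and combined with the embedding $L^\infty(\Omega)\hookrightarrow L^d(\Omega)$ coming from boundedness of $\Omega$. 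For complex $\lambda$ with $\Re\lambda\ge 0$ the same argument upgrades via \cite[Proposition 3.11.2]{abhn} to a resolvent bound for $(\lambda-\mathscr P)^{-1}$; alternatively, one can dominate $|u|$ pointwise by a real non-negative super-solution built from $|f|$ and $\|\varphi\|_{C(\partial\Omega)}$ using the complex maximum principle of Lemma \ref{l.localmax}.

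Next, pick some $p>d$ and choose smooth intermediate open sets $U\Subset V\Subset V'\Subset\Omega$. The classical interior $W^{2,p}$ estimate for strong solutions of elliptic equations with continuous leading coefficients \cite[Theorem 9.11]{gt} furnishes a constant $C_2$, depending on $V$, $V'$, $p$, and $\mathscr A$, with
\[
\|u\|_{W^{2,p}(V)}\le C_2\bigl(\|u\|_{L^p(V')}+\|\mathscr A u\|_{L^p(V')}\bigr).
\]
Since $\mathscr A u=\lambda u-f$ and $\Omega$ is bounded, the right-hand side is dominated by a multiple of $\|u\|_{L^\infty(\Omega)}+\|f\|_{L^\infty(\Omega)}$, and the preceding step then yields $\|u\|_{W^{2,p}(V)}\le C_3\bigl(\|f\|_{L^\infty(\Omega)}+\|\varphi\|_{C(\partial\Omega)}\bigr)$. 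Finally, Morrey's embedding $W^{2,p}(V)\hookrightarrow C^1(\overline V)$, valid for $p>d$ on the smooth set $V$, combined with the restriction $C^1(\overline V)\hookrightarrow C^1(\overline U)$, delivers the claim.

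The only delicate point I foresee is the first step in the complex case: the plain real maximum principle does not apply verbatim to $\lambda u-\mathscr A u=f$ when $\Re\lambda\ge 0$, but the resolvent-positive machinery already invoked in the proof of Proposition \ref{p.wellposed} circumvents it. Steps two and three are then routine interior elliptic regularity plus Sobolev embedding, and the Dini continuity of the $a_{ij}$ from Hypothesis \ref{hyp1} is not needed here — plain continuity suffices.
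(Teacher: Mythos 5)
Your proof is correct, but it follows a genuinely different route from the paper. The paper's own argument is a two-line soft one: since $f\in L^\infty(\Omega)$, elliptic regularity gives $u\in W(\Omega)$, Sobolev embedding gives $W(\Omega)\subset C^1(\Omega)$, and the estimate is then extracted from the \emph{closed graph theorem} applied to the (already well-defined, by Proposition \ref{p.wellposed}) solution map $L^\infty(\Omega)\oplus C(\partial\Omega)\to C^1(\overline U)$. You instead chain three quantitative estimates: a global sup-norm bound via Aleksandrov's maximum principle (upgraded to complex $\lambda$ by the resolvent-positivity of $\mathscr P$, exactly as in the proof of Proposition \ref{p.wellposed}), the interior $W^{2,p}$ estimate of \cite[Theorem 9.11]{gt} on $U\Subset V\Subset V'\Subset\Omega$ with $p>d$, and Morrey's embedding. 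Your approach buys an explicit, traceable constant and avoids having to verify closedness of the graph (which in the paper's version silently uses uniform convergence of solutions from the maximum principle); the paper's approach is shorter and delegates all the quantitative content to abstract functional analysis. One small point worth making explicit in your step two: \cite[Theorem 9.11]{gt} is applied with $p>d$, so you should first note that $u\in W(\Omega)=\bigcap_{1<p<\infty}W^{2,p}_{\mathrm{loc}}(\Omega)$ (and not merely $W^{2,d}_{\mathrm{loc}}$) because $\mathscr A u=\lambda u-f\in L^\infty_{\mathrm{loc}}(\Omega)$ and \cite[Lemma 9.16]{gt} applies --- this is exactly the last assertion of Proposition \ref{p.wellposed}, so it is available to you. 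Your closing remark that only continuity of the $a_{ij}$ near $\overline U$ is needed is accurate and is consistent with the paper's setup.
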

\begin{proof}
As $f \in L^\infty(\Omega)$ we have $u \in W(\Omega)$ by elliptic regularity. By Sobolev embedding (\cite[Corollary 7.11]{gt})
$W(\Omega)\subset C^1(\Omega) $. Now the claim follows immediately from the closed graph theorem.
\end{proof}

If we consider $\mu \equiv 0$, i.e.\ $\mu (z) =0$ for all $z \in \partial \Omega$, then the operator $A_\mu$ is the realization $A_0$
of $\cA$ in $L^\infty(\Omega)$ with Dirichlet boundary conditions. In fact, $A_0$ is given by
\begin{equation}\label{eq.dbc}
\begin{aligned}
	D(A_0)&:=\{u\in C_0(\Omega)\cap W(\Omega):\ \cA u\in L^\infty (\Omega)\}\\
	A_0u&:=\cA u.
	\end{aligned}
\end{equation}
If $\Omega$ is Dirichlet regular, it follows from \cite[Theorem 4.1]{as14}, that the part of $A_0$ in $C_0(\Omega)$ generates a bounded holomorphic semigroup on $C_0(\Omega)$. Conversely, if the coefficients $a_{ij}$ are Lipschitz continuous, this characterizes Dirichlet 
regularity, see \cite[Theorem 4.10]{ab99}.

Concerning the operator $A_0$ on $L^\infty(\Omega)$, we have the following result.

\begin{thm}
	\label{t.sectorial}
	The operator $A_0$ generates an exponentially stable, positive, holomorphic semigroup $T_0$ on $L^\infty(\Omega)$.
	Moreover, $\|T_0(t)\| \leq 1$ for all $t >0$.
\end{thm}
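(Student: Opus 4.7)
My plan is to verify the resolvent hypotheses of Theorem~\ref{t.holest} and then combine the outcome with Propositions~\ref{p.contr}, \ref{p.possg}, and~\ref{p.posstable}. The starting point is Proposition~\ref{p.wellposed}: applying it with $f\in L^\infty(\Omega)\subset L^d(\Omega)$ (using that $\Omega$ is bounded) and $\varphi\equiv 0$ shows that for every $\lambda\in\CC$ with $\Re\lambda\geq 0$ the Dirichlet problem $\lambda u-\cA u=f$, $u|_{\partial\Omega}=0$ has a unique solution $u\in C(\overline\Omega)\cap W^{2,d}_\mathrm{loc}(\Omega)$, and the elliptic-regularity statement in the last line of Proposition~\ref{p.wellposed} upgrades $u$ to $W(\Omega)$, placing it in $D(A_0)$. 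Hence $\{\Re\lambda\geq 0\}\subset\rho(A_0)$ with $R(\lambda,A_0)f=u$; in particular $0\in\rho(A_0)$. Moreover, the positivity clause of Proposition~\ref{p.wellposed} gives $R(\lambda,A_0)\geq 0$ for real $\lambda\geq 0$, which via Proposition~\ref{p.possg} will deliver positivity of the semigroup $T_0$.

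For the contraction $\|\lambda R(\lambda,A_0)\|_{L^\infty\to L^\infty}\leq 1$ on the positive real axis, I compare $v\coloneqq R(\lambda,A_0)|f|$ with the constant $w\coloneqq\|f\|_\infty/\lambda$. Since $c_0\leq 0$, the constant satisfies $\lambda w-\cA w=\|f\|_\infty-c_0\|f\|_\infty/\lambda\geq\|f\|_\infty\geq|f|$ with $w|_{\partial\Omega}=\|f\|_\infty/\lambda\geq 0$, so the difference $w-v$ solves a Dirichlet problem with nonnegative right-hand side and boundary data, and the uniqueness and positivity clauses of Proposition~\ref{p.wellposed} therefore give $v\leq w$. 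Together with the standard lattice inequality $|R(\lambda,A_0)f|\leq R(\lambda,A_0)|f|$ for positive operators on a complex Banach lattice, this yields $\|R(\lambda,A_0)f\|_\infty\leq\|f\|_\infty/\lambda$, and Proposition~\ref{p.contr} will then produce $\|T_0(t)\|\leq 1$.

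The main obstacle is the uniform half-plane estimate $\sup_{\Re\lambda>0}\|\lambda R(\lambda,A_0)\|_{L^\infty\to L^\infty}<\infty$ required by Theorem~\ref{t.holest}. A direct use of Lemma~\ref{l.localmax} at an interior maximum of $|u|$, where $u=R(\lambda,A_0)f$, would yield only the sector bound $\|u\|_\infty\leq\|f\|_\infty/\Re\lambda$, which is not uniformly bounded on vertical lines in the right half-plane; moreover that argument presupposes that $\cA u=\lambda u-f$ be continuous near the maximum point, which is not warranted for $f\in L^\infty$. My strategy is therefore to leverage the bounded holomorphy of the part of $A_0$ in $C_0(\Omega)$ already quoted from \cite[Theorem 4.1]{as14}: by Theorem~\ref{t.holest} it supplies a uniform bound $\|\lambda R(\lambda,A_0|_{C_0(\Omega)})\|_{C_0\to C_0}\leq M$ on $\{\Re\lambda>0\}$. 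I would then transfer the bound to $L^\infty(\Omega)$ by writing $R(\lambda,A_0)f(x)=\int_\Omega G_\lambda(x,y)f(y)\dx y$ via the Green function and using that, for each fixed $x$, the functional $G_\lambda(x,\cdot)\in L^1(\Omega)$ can be controlled by testing against $f\in C_c(\Omega)\subset C_0(\Omega)$, which is norm-dense in $L^1(\Omega)$; this forces $\|G_\lambda(x,\cdot)\|_{L^1(\Omega)}\leq M/|\lambda|$ uniformly in $x$, which is exactly the $L^\infty\to L^\infty$ operator norm of $R(\lambda,A_0)$.

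With the half-plane estimate in hand, Theorem~\ref{t.holest} produces a bounded holomorphic semigroup $T_0$ on $L^\infty(\Omega)$ generated by $A_0$. Propositions~\ref{p.possg} and~\ref{p.contr}, combined with the resolvent bounds above, give positivity of $T_0$ and $\|T_0(t)\|\leq 1$ respectively, while Proposition~\ref{p.posstable}, combined with $[0,\infty)\subset\rho(A_0)$, yields exponential stability.
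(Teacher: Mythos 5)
Your overall skeleton (resolvent positivity from Proposition \ref{p.wellposed}, a contraction estimate on the positive real axis, a half-plane resolvent estimate, then Propositions \ref{p.contr}, \ref{p.possg} and \ref{p.posstable}) matches the paper's, and your argument for $\|\lambda R(\lambda,A_0)\|\leq 1$ is a correct variant: you compare $R(\lambda,A_0)|f|$ with the constant supersolution $\|f\|_\infty/\lambda$ using the positivity clause of Proposition \ref{p.wellposed}, whereas the paper evaluates $u=R(\lambda,A_0)\one$ at an interior maximum and applies Lemma \ref{l.localmax} (legitimate there because $\cA u=\lambda u-\one$ is continuous). Both routes work for that step.

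The genuine gap is in your treatment of the half-plane estimate. Your transfer from $C_0(\Omega)$ to $L^\infty(\Omega)$ starts from the representation $R(\lambda,A_0)f(x)=\int_\Omega G_\lambda(x,y)f(y)\,dy$, but nothing available at this stage of the paper provides such a Green function: the assertion that $R(\lambda,A_0)$, as an operator on $L^\infty(\Omega)$, is a kernel operator with kernel absolutely continuous with respect to Lebesgue measure is essentially the strong Feller property, which is proved only in Section 5 (Proposition \ref{p.strongfeller}) by a nontrivial monotone-convergence argument using the Poisson operator and Proposition \ref{p.local-regularity} --- and that proof already presupposes the generation result you are trying to establish. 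What the $C_0\to C_0$ bound and the Riesz representation theorem actually give you is a measure $\nu_x\in\mathscr{M}(\Omega)$ with $\|\nu_x\|\leq M/|\lambda|$ representing $f\mapsto (R(\lambda,A_0)f)(x)$ \emph{on $C_0(\Omega)$ only}; to say anything about $R(\lambda,A_0)f$ for a general equivalence class $f\in L^\infty(\Omega)$ you would need to know both that $\nu_x$ is absolutely continuous and that the same measure represents the resolvent on $L^\infty(\Omega)$, i.e.\ a pointwise-continuity property of $R(\lambda,A_0)$ under bounded a.e.\ convergence. Neither is justified at this point, so the key estimate $\sup_{\Re\lambda>0}\|\lambda R(\lambda,A_0)\|_{L^\infty\to L^\infty}<\infty$ remains unproved. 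The paper sidesteps this entirely: it feeds the real-axis contraction estimate into the proof of \cite[Theorem 4.1]{as14}, which establishes the sectorial resolvent estimate on $L^\infty(\Omega)$ directly, rather than lifting the $C_0(\Omega)$ result by duality. To repair your proof you would either have to reproduce that argument or first establish the kernel representation of $R(\lambda,A_0)$ independently of Theorem \ref{t.sectorial}.
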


\begin{proof}
It follows from Proposition \ref{p.wellposed} that $[0,\infty) \subset \rho (A_0)$ and $R(\lambda, A_0) \geq 0$ for all $\lambda \geq 0$.
We claim that $\|\lambda R(\lambda, A_0)\| \leq 1$ for all $\lambda >0$. To see this, put $u=R(\lambda, A_0)\one \geq 0$. As 
$R(\lambda, A_0)\geq 0$, we have $u\geq 0$ and it suffices to prove that $\lambda u \leq \one$. To this end,  
pick $x_0$ such that $u(x_0)= \max_{x \in \overline{\Omega}} u(x)$. If $u(x_0) = 0$, there is nothing to prove, 
so let us assume that $u(x_0)>0$, so that $x_0 \in \Omega$. As $\cA u = \lambda u - \one \in C(\overline{\Omega})$, it follows from Lemma \ref{l.localmax} that $\cA u(x_0) \leq 0$. Consequently, 
\[ \lambda R(\lambda , A_0)\one = \lambda u \leq \lambda u(x_0) 
= \lambda u(x_0) - 1 +1= \cA u(x_0) + 1 \leq 1,
\]
proving the claim.\smallskip

With this information at hand (which replaces \cite[Proposition 4.4]{as14}), the proof of \cite[Theorem 4.1]{as14} shows that $A_0$ generates a holomorphic semigroup $T_0$ on 
$L^\infty(\Omega)$. It follows from Proposition \ref{p.possg} that $T_0(t)\geq 0$ for $t>0$ and from Proposition \ref{p.contr} that 
$\|T_0(t)\| \leq 1$ for such $t$. Finally, Proposition \ref{p.expstable} implies that $T_0$ is exponentially stable.
\end{proof}

\section{Generation results}\label{sect.three}

In this section, we will prove that $A_{\mu}$ generates a holomorphic  semigroup $(T_\mu (t))_{t > 0}$ on $L^\infty (\Omega)$. We recall that
$A_\mu$ is defined by
\[
\begin{aligned}
D(A_{\mu}) & \coloneqq \Big\{ u \in C(\overline{\Omega})\cap W(\Omega) :\cA u \in L^\infty(\Omega), \\
&\qquad\qquad u(z) = \int_\Omega u(x)\, 
\mu (z, dx)\,\, \forall\, z \in \partial\Omega\Big\}\\
A_{\mu} u & \coloneqq \cA u.
\end{aligned}
\]

We first establish that $\lambda-A_{\mu}$ is injective for $\Re\lambda>0$.

\begin{lem}\label{l.amuinj}
Let $u \in D(A_{\mu})$ and $\Re\lambda >0$ be such that $\lambda u - \cA u =0$. Then $u=0$.
\end{lem}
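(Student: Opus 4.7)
The plan is to combine the maximum principle in Lemma \ref{l.maxprinciple2} with the nonlocal boundary condition to force $u \equiv 0$. Set $M := \|u\|_{C(\overline\Omega)}$, which is attained since $u$ is continuous on the compact set $\overline\Omega$, and argue by contradiction assuming $M > 0$.

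The first step is to invoke Lemma \ref{l.maxprinciple2} with the bound $M_0 := \max_{\partial\Omega}|u|$. Since $\lambda u = \mathscr{A}u$ with $\Re\lambda > 0$, the lemma gives $|u| \leq M_0$ on $\overline\Omega$ (so $M = M_0$, and the maximum is actually attained on $\partial\Omega$) together with the crucial \emph{strict} interior bound $|u(x)| < M$ for every $x \in \Omega$. I then pick some $z_0 \in \partial\Omega$ with $|u(z_0)| = M$.

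The second step plays this off against the boundary condition $u(z_0) = \int_\Omega u(x)\,\mu(z_0,dx)$. By the triangle inequality, the pointwise bound $|u| \leq M$ on $\Omega$, and the sub-probability assumption $\mu(z_0,\Omega) \leq 1$, one obtains the chain
\[
M = |u(z_0)| \leq \int_\Omega |u(x)|\,\mu(z_0,dx) \leq M\,\mu(z_0,\Omega) \leq M,
\]
which must collapse to equalities throughout. Rewriting the middle equality as $\int_\Omega (M-|u(x)|)\,\mu(z_0,dx)=0$ with a strictly positive integrand on all of $\Omega$, the only possibility is $\mu(z_0,\Omega)=0$. But then the boundary condition forces $u(z_0)=0$, contradicting $|u(z_0)|=M>0$, and so $M=0$ and $u\equiv 0$.

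The main conceptual point is that the \emph{strict} form of the maximum principle is essential: were the interior bound merely $|u|\leq M$, nothing in the chain above would be strict and the argument would fail. Once the strict version from Lemma \ref{l.maxprinciple2} is in hand, the sub-probability condition $\mu(z_0,\Omega)\leq 1$ delivers the contradiction immediately, and I expect no serious technical difficulty; everything rests on the two ingredients already established.
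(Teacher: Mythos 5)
Your proof is correct and follows essentially the same route as the paper: both arguments evaluate the nonlocal boundary condition at a boundary point $z_0$ where $|u|$ attains its maximum and play it off against the strict interior bound $|u|<M$ from Lemma \ref{l.maxprinciple2} together with $\mu(z_0,\Omega)\leq 1$. The only cosmetic difference is that the paper derives the contradiction via an explicit estimate $|u|\leq M-\delta$ on a small ball of positive $\mu(z_0)$-measure, whereas you package the same fact as ``equality throughout the chain forces $\mu(z_0,\cdot)=0$''; both are valid.
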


\begin{proof}
By Lemma \ref{l.maxprinciple2}, there exists a point $z_0 \in \partial\Omega$ with 
$|u(z_0)| = \max\{ u(x) : x \in \overline{\Omega}\}$. Suppose that $u\neq 0$, i.e.\ $u(z_0) \neq 0$. We may assume without loss of generality that $|u(z_0)|=1$. Since $u(z_0) = \int_\Omega u(x) \, \mu (z_0, dx)$ we have $\mu (z_0) \neq 0$. Hence there exists 
a point $x_0 \in \Omega$ such that $\mu (z_0, B(x_0,r)) >0$ for all $r>0$ with $B(x_0,r) \subset \Omega$. 
Since $|u(x_0)| < 1$ by Lemma \ref{l.maxprinciple2}, we find $r>0$ and $\delta>0$ such that
$|u(x)| \leq 1-\delta$ for all $x \in B(x_0,r)$. Thus, with $\eps \coloneqq \mu (B(x_0,r))>0$, we obtain
\begin{align*}
1 & = |u(z_0)| \leq \int_\Omega | u(x)|\, \mu (z_0, dx)\\
& \leq \int_{B(x_0,r)} (1-\delta)\,\mu (z_0, dx) + \int_{\Omega\setminus B(x_0, r)} 1\, \mu (z_0, dx)\\
& = (1-\delta)\mu (z_0, B(x_0,r)) + \mu (z_0, \Omega\setminus B(x_0,r))\\
& = \mu (z_0, \Omega) - \delta \eps < 1.
\end{align*}
This is a contradiction.
\end{proof}

Thus, to prove that $A_\mu$ generates a holomorphic semigroup, it remains to show that $\lambda - A_\mu$ is surjective for $\Re\lambda >0$ and to establish the holomorphic estimate in Theorem \ref{t.holest}. We already know from Theorem \ref{t.sectorial} that $A_0$ generates a holomorphic semigroup. We will see that we can obtain the resolvent of $A_\mu$ from that of $A_0$ by a perturbation involving the operator $S_\lambda \in \cL (C(\overline{\Omega}))$ which is defined as follows.

 Given a function $v \in C(\overline{\Omega})$ we define 
$\varphi (z) \coloneqq \langle v, \mu (z)\rangle$. Since $\mu$ is $\sigma ( \mathscr{M}(\Omega), C_b(\Omega))$-continuous, 
we have $\varphi \in C(\partial \Omega)$. Thus, by Proposition \ref{p.wellposed}, there exists a unique function 
$u_\varphi$ with
\[
\lambda u_\varphi - \cA u_\varphi =0\quad\mbox{ and }\quad u_\varphi|_{\partial \Omega} = \varphi .
\]
We set $S_\lambda v \coloneqq u_\varphi$. Let us note that $S_\lambda v \in W(\Omega)$ for all $v \in C(\overline{\Omega})$
by elliptic regularity.

It follows from the complex maximum principle (Lemma \ref{l.maxprinciple2}) that $\|S_\lambda \|\leq 1$ for $\Re\lambda >0$. Moreover, $S_\lambda \geq 0$ whenever
$\lambda \in (0,\infty)$. We will prove that for $\Re\lambda >0$ the operator $(I-S_\lambda)$ is invertible and we have
\[
R(\lambda, A_\mu) = (I-S_\lambda)^{-1}R(\lambda, A_0).
\]
We start with 

\begin{prop}\label{p.S-properties}
For $\Re\lambda >0$ the operator $I-S_\lambda$ is invertible and for every $\delta>0$ we have
	\[
	 	\sup_{\Re\lambda\geq\delta}\|(I-S_\lambda)^{-1}\|<\infty.
	\] 
If $A_\mu$ is injective, then $I-S_0$ is invertible.
\end{prop}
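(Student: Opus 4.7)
The plan is to invert $I - S_\lambda$ by combining injectivity with the Fredholm alternative applied after showing that $S_\lambda^2$ is compact. For injectivity on $\{\Re\lambda > 0\}$, observe that a fixed point $v = S_\lambda v$ automatically lies in $C(\overline\Omega) \cap W(\Omega)$ with $\lambda v - \cA v = 0$ and $v(z) = \langle v, \mu(z)\rangle$ on $\partial\Omega$; thus $v \in D(A_\mu)$ satisfies $(\lambda - A_\mu)v = 0$, and Lemma \ref{l.amuinj} forces $v = 0$. The same identification covers the final claim: any fixed point of $S_0$ lies in $\ker A_\mu$, hence vanishes when $A_\mu$ is injective.

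The technical core is the compactness of $S_\lambda^2$, for which I would first establish uniform tightness of the family $\{\mu(z) : z \in \partial\Omega\}$. Exhausting $\Omega$ by compacts $K_n$ and choosing cutoffs $\chi_n \in C_b(\Omega)$ with $\one_{K_n} \leq \chi_n \leq \one_{K_{n+1}}$, the continuous functions $z \mapsto \int (\one - \chi_n) \, d\mu(z)$ decrease pointwise to $0$ on the compact space $\partial\Omega$, so Dini's theorem promotes the convergence to uniform; equivalently, for every $\eps > 0$ there is a compact $K \Subset \Omega$ with $\sup_z \mu(z, \Omega \setminus K) < \eps$. Given a bounded sequence $v_n$ in $C(\overline\Omega)$, set $w_n := S_\lambda v_n$: the maximum principle gives $\|w_n\|_\infty \leq \|v_n\|_\infty$, and since $\lambda w_n - \cA w_n = 0$, Proposition \ref{p.local-regularity} together with a diagonal Arzelà-Ascoli extraction yields a subsequence converging uniformly on every compact $K \Subset \Omega$. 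Splitting $\int w_n \, d\mu(z)$ over $K$ and $\Omega \setminus K$ and invoking uniform tightness shows $z \mapsto \int w_n \, d\mu(z)$ is Cauchy in $C(\partial\Omega)$; applying the bounded Dirichlet solution operator $\varphi \mapsto u_\varphi$ then gives $S_\lambda^2 v_n$ Cauchy in $C(\overline\Omega)$. Since $S_\lambda^2$ is compact, $S_\lambda$ is a Riesz (power-compact) operator and $I - S_\lambda$ is Fredholm of index zero; combined with injectivity this yields invertibility.

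The main obstacle is the uniform bound on $\{\Re\lambda \geq \delta\}$. The identity $S_{\lambda_2} - S_{\lambda_1} = (\lambda_1 - \lambda_2) R(\lambda_2, A_0) S_{\lambda_1}$, obtained by checking that both sides vanish on $\partial\Omega$ and that $(\lambda_2 - \cA)$ applied to each gives $(\lambda_1 - \lambda_2)S_{\lambda_1}$, shows $\lambda \mapsto S_\lambda$ is norm continuous on $\{\Re\lambda > 0\}$, so $\lambda \mapsto (I - S_\lambda)^{-1}$ is continuous where defined and therefore bounded on any compact subset of $\{\Re\lambda \geq \delta\}$. To handle $|\lambda| \to \infty$ (including large imaginary parts), I would use an exponential barrier argument to obtain $\|S_\lambda\|_{C(\overline\Omega) \to L^\infty(K)} \leq C e^{-c \sqrt{|\lambda|}\, d(K, \partial\Omega)}$ for each $K \Subset \Omega$, exploiting that $\Re\sqrt\lambda \geq \sqrt{|\lambda|/2}$ on the half-plane. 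Combined with uniform tightness, this forces $\|S_\lambda^2\|_{\cL(C(\overline\Omega))} \to 0$ as $|\lambda| \to \infty$, and the factorization $(I - S_\lambda)^{-1} = (I + S_\lambda)(I - S_\lambda^2)^{-1}$ then yields a bound of the form $2(1 - \|S_\lambda^2\|)^{-1}$ that stays finite at infinity, completing the uniform estimate.
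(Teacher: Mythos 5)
Your treatment of the invertibility statements follows the paper closely: injectivity of $I-S_\lambda$ by identifying a fixed point of $S_\lambda$ with an element of $\ker(\lambda-A_\mu)$ and invoking Lemma \ref{l.amuinj} (respectively injectivity of $A_\mu$ when $\lambda=0$), compactness of $S_\lambda^2$ via the interior estimates of Proposition \ref{p.local-regularity}, uniform tightness of $\{\mu(z):z\in\partial\Omega\}$ and the maximum principle, and then a power-compact Fredholm alternative. These parts are correct and essentially identical to Lemmas \ref{l.S-compact}, \ref{l.fredholm} and \ref{l.S-invertible}; your Dini-theorem derivation of tightness in place of the citation of Prokhorov's theorem is a harmless variation.

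The gap is in the uniform bound on $\{\Re\lambda\geq\delta\}$, which is the hard part of the proposition. Your reduction to (i) continuity of $\lambda\mapsto(I-S_\lambda)^{-1}$ on compact parameter sets and (ii) smallness of $\|S_\lambda^2\|$ as $|\lambda|\to\infty$ is a sensible strategy, and the identity $S_{\lambda+\tau}-S_\lambda=-\tau R(\lambda+\tau,A_0)S_\lambda$ is exactly the one the paper derives (to a different end). But step (ii) rests on the unproved claim $\|S_\lambda\|_{C(\overline{\Omega})\to L^\infty(K)}\leq Ce^{-c\sqrt{|\lambda|}\,d(K,\partial\Omega)}$ for \emph{complex} $\lambda$. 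A barrier argument compares $|u|$ with a real positive supersolution, i.e.\ with a function $w$ satisfying $\Re\lambda\,w-\cA w\geq 0$, and therefore yields decay governed by $\Re\lambda$ only; on the half-plane $\Re\lambda\geq\delta$ the real part stays bounded along vertical lines while $|\lambda|\to\infty$, so such an argument does not force $\|S_\lambda^2\|\to 0$ there. The rate $\Re\sqrt{\lambda}\geq\sqrt{|\lambda|/2}$ is correct for the model operator, but establishing it in general requires a Combes--Thomas/Agmon-type conjugation or off-diagonal bounds for the resolvent kernel, neither of which is available off the shelf under the paper's weak hypotheses (Dini or merely continuous $a_{ij}$, $L^\infty$ lower-order coefficients, Dirichlet regular $\Omega$); as written this step would fail. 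The paper circumvents the issue entirely: from $\frac{d^n}{d\lambda^n}S_\lambda=(-1)^n n!\,R(\lambda,A_0)^nS_\lambda$ it deduces that $\lambda\mapsto S_\lambda$ is completely monotone on $(0,\infty)$, applies Bernstein's theorem to write $(S_\lambda v)(x)=\int_0^\infty e^{-\lambda t}\,d\alpha(t)$ with $\alpha$ increasing, obtains the domination $|S_\lambda^n v|\leq S_\delta^n|v|$ for $\Re\lambda\geq\delta$, and concludes with the Neumann series since $r(S_\delta)<1$. That route uses only positivity and gives the bound $4\|(I-S_\delta)^{-1}\|$ on the whole half-plane at once; you should either adopt it or supply a full proof of the complex-$\lambda$ decay estimate.
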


The proof of Proposition \ref{p.S-properties} is given in a series of lemmas.

\begin{lem}\label{l.S-compact}
	$ S_\lambda^2 $ is compact for $\Re\lambda >0$ and for $\lambda =0$.
\end{lem}

\begin{proof}
	Let $u_n\in C(\overline{\Omega})$ be a bounded sequence. Recall that $S_\lambda u_n = u_{\varphi_n}$ where $\varphi_n(z)=\langle\mu(z),u_n\rangle$ and $u_{\varphi_n}$ solves the Dirichlet problem $\lambda u - \cA u =0$ with boundary values $\varphi_n$.
As a consequence of Proposition~\ref{p.local-regularity}  the sequence $ (u_{\varphi_n})_{n\in\CN} $ is locally equicontinuous on 
$\Omega$ so that, passing to a subsequence, we may and shall assume that $u_{\varphi_n}$ converges uniformly on each compact subset of $\Omega$ to a continuous function $u$. Note that $u$ is also bounded.
	
Now let $v_n=S_\lambda u_{\varphi_n}=S_\lambda^2u_n$. Then $v_n\in C(\overline{\Omega})\cap W(\Omega)$, $\lambda v_n-\cA v_n=0$ and $v_n(z)=\la\mu(z),u_{\varphi_n}\ra=:\psi_n(z)$ for all $n\in\CN $, $ z\in\partial\Omega$. Since $\partial \Omega$ is compact and $\mu$ is $\sigma (\mathscr{M}(\Omega), C_b(\Omega))$-continuous, it follows that the
set $\{ \mu (z): z\in\partial\Omega\}$ is $\sigma (\mathscr{M}(\Omega), C_b(\Omega))$-compact and thus, as a consequence of 
Prokhorov's theorem \cite[Theorem 8.6.2]{bogachev} \emph{tight}, i.e.\ given $\eps>0$ we find a compact set $K \subset \Omega$ such that
$\mu (z, \Omega\setminus K) \leq \eps$ for all $z \in \partial \Omega$. We should note that \cite[Theorem 8.6.2]{bogachev}
requires $\Omega$ to be a complete, separable metric space, whereas our $\Omega$ is not complete with respect to 
the usual metric. However,
as an open subset of the Polish space $\CR^d$, the set $\Omega$ itself is Polish, i.e.\ its topology is induced by a complete, separable
metric. It is with respect to this metric that we apply the theorem from \cite{bogachev}. 

As $u_n \to u$ locally uniformly, this clearly implies
that $\psi_n(z) \coloneqq \langle u_n,\mu(z)\rangle \to \psi(z)\coloneqq \langle u, \mu(z)\rangle$ uniformly in $z \in \partial \Omega$. In the case where $\Re\lambda>0$, it follows
from the complex maximum principle (Lemma \ref{l.maxprinciple2}) that $S_\lambda^2 u_n = u_{\psi_n} \to u_{\psi}$ uniformly on $\overline{\Omega}$. In the case where $\lambda =0$ we apply Alexandrov's maximum principle \cite[Theorem 9.1]{gt}
to real and imaginary parts separately to infer that $\Re u_{\psi_n} = u_{\Re \psi_n} \to u_{\Re\psi} = \Re u_{\psi}$
uniformly on $\overline{\Omega}$ and similarly for the imaginary parts. Also in this case it follows that $S_\lambda^2 u_n \to 
u_{\psi}$ uniformly on $\overline{\Omega}$. In both cases we have proved that $S_\lambda^2$ is compact.
\end{proof}

Lemma \ref{l.S-compact} will allow us to prove that $I -S_\lambda$ is invertible for $\Re\lambda>0$. In the proof we also use 
the following variant of the Fredholm alternative which is a special case of \cite[Theorem 15.4]{krein82} In an effort of being self-contained, we include a proof.

\begin{lem}\label{l.fredholm}
Let $X$ be a Banach space and $T \in \cL (X)$ be such that $T^2$ is compact. If $I-T$ is injective, then $I-T$ is surjective.
\end{lem}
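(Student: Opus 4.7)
The plan is to exploit the commuting factorizations $(I-T)(I+T)=(I+T)(I-T)=I-T^2$ together with the fact that $T^2$ is compact, so that classical Riesz--Schauder theory is available for $I-T^2$. Two immediate consequences are that $\ker(I-T)\subseteq\ker(I-T^2)$ (hence finite dimensional) and that the range of $I-T$ contains the range of $I-T^2$, which is closed of finite codimension; this already shows $I-T$ is Fredholm, but by itself does not pin down the index, so a little more structure is needed.

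I would use the Riesz decomposition attached to the Fredholm operator $I-T^2$: there exists $p\in\CN$ such that
\[
X = N\oplus R,\qquad N=\ker\bigl((I-T^2)^p\bigr),\quad R=\mathrm{ran}\bigl((I-T^2)^p\bigr),
\]
with $N$ finite dimensional, $R$ closed, $(I-T^2)|_R$ invertible, and both subspaces invariant under every operator that commutes with $T^2$. Since $T$ trivially commutes with $T^2$, both $N$ and $R$ are $T$-invariant (hence $(I\pm T)$-invariant).

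On $R$, invertibility of $I-T^2=(I+T)(I-T)=(I-T)(I+T)$ forces $(I-T)|_R\colon R\to R$ to be invertible, with inverse $(I+T)(I-T^2)^{-1}|_R$; in particular $(I-T)|_R$ is surjective onto $R$. On $N$, which is finite dimensional and $T$-invariant, the hypothesis that $I-T$ is injective on $X$ gives that $(I-T)|_N$ is injective, and hence, by dimension count, surjective onto $N$. Decomposing any $y\in X$ as $y=y_N+y_R$ and pulling each piece back separately yields a preimage under $I-T$, so $I-T$ is surjective.

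The main obstacle is really the ``index 0'' content, and the cleanest way I see to obtain it is precisely this use of the Riesz decomposition; a more elementary argument trying to show directly that codimension of the range equals dimension of the kernel (both controlled by $I-T^2$) runs into the issue that the inclusions $\ker(I-T)\subseteq\ker(I-T^2)$ and $\mathrm{ran}(I-T)\supseteq\mathrm{ran}(I-T^2)$ go in opposite directions and need not be equalities. Invoking the Riesz decomposition of $T^2$ and the fact that $T$ preserves the associated spectral subspaces bypasses this difficulty and produces the required surjectivity.
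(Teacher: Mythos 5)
Your argument is correct, but it follows a genuinely different route from the paper's. You invoke the full Riesz decomposition of $I-T^2$, namely $X=\ker\bigl((I-T^2)^p\bigr)\oplus\mathrm{ran}\bigl((I-T^2)^p\bigr)$, observe that both summands are $T$-invariant because $T$ commutes with $T^2$, and then treat the two pieces separately: on the range part the factorization $(I-T)(I+T)=(I+T)(I-T)=I-T^2$ forces $(I-T)$ to be invertible there, while on the finite-dimensional kernel part injectivity plus rank--nullity gives surjectivity; all the individual steps (invariance of the spectral subspaces, the two-sided inverse on $R$, the dimension count on $N$) check out. The paper instead argues by contradiction with softer spectral tools: compactness of $T^2$ makes $\sigma(T^2)\setminus\{0\}$ discrete, the spectral mapping theorem transfers this to $\sigma(T)\setminus\{0\}$, so $1$, if it lies in $\sigma(T)$, is a boundary point of the spectrum and hence an approximate eigenvalue; from a sequence $(x_n)$ with $\|x_n\|\equiv1$ and $(I-T)x_n\to0$ one extracts, using compactness of $T^2$, a limit $y$ with $Ty=y$, and injectivity forces $y=0$, contradicting $\|x_n\|\equiv1$. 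Your approach requires the heavier structural input (finite ascent and descent of $I-T^2$ and the associated direct sum decomposition), but in exchange it is constructive---it exhibits a preimage---and it in fact yields that $I-T$ is Fredholm of index zero even without the injectivity hypothesis. The paper's proof is shorter and closer to first principles, which matches its stated aim of being self-contained.
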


\begin{proof}
Since $T^2$ is compact, $\sigma (T^2)\setminus\{0\}$ is countable with $0$ as only possible accumulation point. By the spectral mapping theorem, it follows that $\sigma (T) \setminus \{0\}$ is discrete. In particular every point in $\sigma (T)\setminus \{0\}$ is a boundary point, so that any point in that set belongs to the approximate point spectrum. 

Now assume that $I-T$ is injective but not surjective, so that $1 \in \sigma (T)\setminus \{0\}$. By the above there exists a sequence
$(x_n)$ with $\|x_n\| \equiv 1$ such that $(I-T)x_n \to 0$. As $T^2$ is compact, passing to a subsequence we may and shall assume that $T^2x_n$ converges, say to $y$. By continuity of $T$, we have $Tx_n - T^2x_n = T(I-T)x_n \to 0$ so that also $Tx_n \to y$.
It follows that $Ty= \lim T (Tx_n) = \lim T^2x_n =y$. As $I-T$ is injective, $y=0$. This contradicts the fact that $\|x_n\|\equiv 1$.
\end{proof}

\begin{lem}\label{l.S-invertible}
The operator $I-S_\lambda $ is invertible for $\Re\lambda>0$. If $A_{\mu}$ is injective, then also $I-S_0$ is invertible.
\end{lem}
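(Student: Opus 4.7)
The plan is to reduce the invertibility of $I - S_\lambda$ to the already established injectivity of $\lambda - A_\mu$ via the Fredholm-type argument of Lemma \ref{l.fredholm}. Since by Lemma \ref{l.S-compact} the operator $S_\lambda^2$ is compact, Lemma \ref{l.fredholm} tells us that it suffices to verify that $I - S_\lambda$ is injective; surjectivity, and hence invertibility (by the open mapping theorem), will follow automatically.

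To prove injectivity, I would take $v \in C(\overline{\Omega})$ with $S_\lambda v = v$ and show that $v \in D(A_\mu)$ with $(\lambda - A_\mu) v = 0$. By construction of $S_\lambda$, the function $w := S_\lambda v = u_\varphi$ with $\varphi(z) = \langle v, \mu(z)\rangle$ lies in $C(\overline{\Omega}) \cap W(\Omega)$, satisfies $\lambda w - \cA w = 0$ in $\Omega$ and has boundary values $w|_{\partial\Omega} = \varphi$. Since $w = v$, we obtain
\[
\lambda v - \cA v = 0 \quad \text{and} \quad v(z) = \int_\Omega v(x)\,\mu(z, dx) \text{ for all } z \in \partial\Omega,
\]
which is exactly the statement that $v \in D(A_\mu)$ and $(\lambda - A_\mu) v = 0$. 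For $\Re\lambda > 0$, Lemma \ref{l.amuinj} forces $v = 0$. For $\lambda = 0$, the assumed injectivity of $A_\mu$ gives the same conclusion. This establishes both parts of the lemma.

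The only nontrivial point in this scheme is verifying that a fixed point of $S_\lambda$ really does belong to $D(A_\mu)$, but the regularity $v = S_\lambda v \in W(\Omega) \cap C(\overline{\Omega})$ and the PDE $\lambda v - \cA v = 0$ (so in particular $\cA v = \lambda v \in L^\infty(\Omega)$) come for free from the definition of $S_\lambda$ via Proposition \ref{p.wellposed}, and the boundary identity is just the equation $v = u_\varphi$ evaluated on $\partial \Omega$. I do not expect genuine obstacles here; the entire content has essentially been packaged into Lemmas \ref{l.amuinj}, \ref{l.S-compact}, and \ref{l.fredholm}. The uniform bound $\sup_{\Re\lambda \geq \delta}\|(I - S_\lambda)^{-1}\| < \infty$ stated in Proposition \ref{p.S-properties} is a separate matter and would have to be treated elsewhere (presumably by combining $\|S_\lambda\| \leq 1$ with a compactness/equicontinuity argument in $\lambda$), but for this lemma only invertibility is claimed.
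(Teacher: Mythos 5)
Your proposal is correct and follows exactly the paper's own argument: reduce to injectivity via Lemma \ref{l.fredholm} and the compactness of $S_\lambda^2$, then observe that a fixed point of $S_\lambda$ lies in $D(A_\mu)$ and is killed by $\lambda - A_\mu$, so Lemma \ref{l.amuinj} (resp.\ the injectivity assumption for $\lambda = 0$) forces it to vanish. The only difference is that you spell out the verification that a fixed point belongs to $D(A_\mu)$, which the paper leaves implicit.
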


\begin{proof}
By Lemma \ref{l.fredholm}, it suffices to prove that $I-S_\lambda$ is injective. However, if $u=S_\lambda u$ then 
$u \in D(A_\mu)$ and $\lambda u - \cA u =0$. If $\Re\lambda >0$ it follows from  Lemma \ref{l.amuinj} that $u=0$. 
In the case where $\lambda=0$, we have $A_\mu u =0$ and thus $u=0$ by assumption.
\end{proof}

In Lemma \ref{l.s-bounded} below we consider the complex Banach space $C(\overline{\Omega})$. For $v \in C(\overline{\Omega})$
we write $v\geq 0$ if $v$ is real valued and $v(x) \geq 0$ for all $x \in \overline{\Omega}$.

\begin{lem}\label{l.dominate}
Let $T_1, T_2: C(\overline{\Omega}) \to C(\overline{\Omega})$ be bounded linear operators. Assume that $T_2$
is positive (i.e.\ $T_2v \geq 0$ whenever $v\geq 0$) and that $|T_1v| \leq T_2v$ for all $v\geq 0$. Then
\[
|T_1v| \leq T_2|v|
\]
for all $v \in C(\overline{\Omega})$.
\end{lem}

\begin{proof}
The space $E= C(\overline{\Omega})$ is a complex Banach lattice. Thus the bidual $E''$ is an order complete Banach lattice and the evaluation map $j : E \to E''$ is an injective lattice homomorphism, see \cite[Theorem II.5.5]{schaefer} and Corollary 2 of that theorem. Now let $0\leq f \in E$ and $\theta \in \CR$. Then
\[
\Re (e^{i\theta}j(T_1f)) \leq |j (T_1f)| = j(|T_1 f|) \leq j(T_2 f).
\]
It follows from \cite[Theorem IV.1.8]{schaefer} and the two lines after \cite[Definition IV.1.7]{schaefer} that $|j(T_1v)| \leq j(T_2|v|)$ for all
$v \in E$. Hence $j(|T_1v|) = |j(T_1v)| \leq j(T_2|v|)$ which implies that $|T_1v| \leq T_2|v|$ as claimed.
\end{proof}

We now prove the estimate  in Proposition \ref{p.S-properties}. This finishes the proof  of Proposition \ref{p.S-properties}.

\begin{lem}\label{l.s-bounded}
	For all $\delta>0$ we have
	\[ 
		\sup_{\Re\lambda\geq\delta}\|(I-S_\lambda)^{-1}\|<\infty.
	\]
Moreover, $(I-S_\lambda)^{-1} \geq 0$ if $\lambda>0$ is real.
\end{lem}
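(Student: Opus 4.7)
The plan is to combine a continuity argument on compact subsets of $\{\Re\lambda\geq\delta\}$ with a decay estimate as $|\lambda|\to\infty$, and then obtain positivity by a Neumann approximation.

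\emph{Continuity and boundedness on compacta.} For $\lambda,\lambda'$ with $\Re\lambda,\Re\lambda'\geq\delta$ and $v\in C(\overline\Omega)$, the functions $u=S_\lambda v$ and $u'=S_{\lambda'}v$ share the same boundary values $\la v,\mu(\cdot)\ra$, so $u-u'$ vanishes on $\partial\Omega$ and solves $\lambda(u-u')-\cA(u-u')=(\lambda'-\lambda)u'$. Theorem~\ref{t.sectorial} then yields $u-u'=(\lambda'-\lambda)R(\lambda,A_0)u'$, whence
\[
\|S_\lambda-S_{\lambda'}\|\leq|\lambda-\lambda'|\cdot\|R(\lambda,A_0)\|.
\]
Since $\|R(\lambda,A_0)\|$ is locally bounded on $\{\Re\lambda\geq\delta\}$, the map $\lambda\mapsto S_\lambda$ is norm-continuous there. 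Together with the invertibility of $I-S_\lambda$ from Lemma~\ref{l.S-invertible} and the continuity of inversion on invertible operators, $\lambda\mapsto(I-S_\lambda)^{-1}$ is norm-continuous, hence bounded on every compact subset of $\{\Re\lambda\geq\delta\}$.

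\emph{Bound at infinity.} Factor $S_\lambda=D_\lambda M$, where $M\colon C(\overline\Omega)\to C(\partial\Omega)$, $v\mapsto\la v,\mu(\cdot)\ra$, has norm $\leq 1$, and $D_\lambda\colon C(\partial\Omega)\to C(\overline\Omega)$ is the Dirichlet solution operator for $\lambda-\cA$. Setting $K_\lambda\coloneqq MD_\lambda\in\cL(C(\partial\Omega))$, a direct computation gives the operator identity
\[
(I-S_\lambda)^{-1}=I+D_\lambda(I-K_\lambda)^{-1}M,
\]
so it suffices to control $\|(I-K_\lambda)^{-1}\|$ for large $|\lambda|$. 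The plan is to show $\|K_\lambda\|\to 0$ as $|\lambda|\to\infty$ in $\{\Re\lambda\geq\delta\}$, so that $(I-K_\lambda)^{-1}$ is tamed by Neumann series. By tightness of $\{\mu(z):z\in\partial\Omega\}$ used in the proof of Lemma~\ref{l.S-compact}, given $\eps>0$ there is a compact $L\ssubset\Omega$ with $\mu(z,\Omega\setminus L)<\eps$ for all $z$, so
\[
|K_\lambda\varphi(z)|\leq\sup_{x\in L}|D_\lambda\varphi(x)|+\eps\|\varphi\|_\infty.
\]
The outstanding PDE input is the uniform interior decay $\sup_{\|\varphi\|\leq 1}\sup_L|D_\lambda\varphi|\to 0$ as $|\lambda|\to\infty$ in the sector. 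Rewriting $D_\lambda\varphi=D_0\varphi-\lambda R(\lambda,A_0)D_0\varphi$ and exploiting the sectorial resolvent estimates for the holomorphic Dirichlet semigroup (Theorem~\ref{t.sectorial}), one sees that $D_\lambda\varphi$ concentrates in a shrinking neighborhood of $\partial\Omega$, yielding the required decay on $L$.

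\emph{Positivity and main obstacle.} For real $\lambda>0$ we have $S_\lambda\geq 0$ and $\|S_\lambda\|\leq 1$. For $r\in(0,1)$ the Neumann series $(I-rS_\lambda)^{-1}=\sum_{n\geq 0}r^nS_\lambda^n$ converges in norm to a positive operator. By the invertibility of $I-S_\lambda$ already established and continuity of inversion, $(I-rS_\lambda)^{-1}\to(I-S_\lambda)^{-1}$ in operator norm as $r\nearrow 1$, and the limit of positive operators is positive. The technical heart of the argument is the interior decay estimate above: for real $\lambda\to\infty$ it follows immediately from the probabilistic formula $u(x)=\mathbb{E}_x[e^{-\lambda\tau}\varphi(X_\tau)]$ or a maximum-principle barrier, but the complex case in a sector demands the finer sectorial behavior of the resolvent kernel of $A_0$ supplied by the holomorphy of $T_0$.
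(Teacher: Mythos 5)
Your positivity argument for real $\lambda>0$ (approximate by $(I-rS_\lambda)^{-1}=\sum_n r^nS_\lambda^n\geq 0$ and let $r\nearrow 1$, using norm-continuity of inversion) is correct, and your continuity argument on compact subsets of $\{\Re\lambda\geq\delta\}$ is also fine. But the uniform bound over the whole half-plane is not established: everything hinges on the claim that $\sup_{\|\varphi\|_\infty\leq 1}\sup_{x\in L}|D_\lambda\varphi(x)|\to 0$ as $|\lambda|\to\infty$ in $\{\Re\lambda\geq\delta\}$, and you explicitly leave this as the ``outstanding PDE input'' without proving it. The mechanisms you point to do not close the gap. The probabilistic/barrier bound $|D_\lambda\varphi|\leq\|\varphi\|_\infty D_{\Re\lambda}\one$ uses positivity and so only controls $D_\lambda$ through its real part; along a vertical line $\Re\lambda=\delta$ this majorant does not decay at all, so it says nothing about $|\Im\lambda|\to\infty$. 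The identity $D_\lambda\varphi=D_0\varphi-\lambda R(\lambda,A_0)D_0\varphi$ does not help either: $D_0\varphi$ does not lie in $\overline{D(A_0)}$ (its boundary values are $\varphi\neq 0$), so $\lambda R(\lambda,A_0)D_0\varphi\not\to D_0\varphi$ in $C(\overline\Omega)$, and even local convergence for fixed $\varphi$ would not give the uniformity over the unit ball of $\varphi$ that you need (convergence of $\lambda R(\lambda,A)g\to g$ is never uniform over a ball unless the generator is bounded). What is actually needed is an interior Agmon-type decay estimate of order $e^{-c\sqrt{|\lambda|}\,\mathrm{dist}(L,\partial\Omega)}$ for solutions of $\lambda u-\cA u=0$ with complex $\lambda$, under merely Dini-continuous coefficients and Dirichlet-regular $\Omega$; this is a substantial piece of analysis that your sketch does not supply.

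The paper sidesteps this difficulty entirely. It shows that $\lambda\mapsto S_\lambda$ is holomorphic with $\frac{d^n}{d\lambda^n}S_\lambda=(-1)^nn!\,R(\lambda,A_0)^nS_\lambda$, hence completely monotonic on $(0,\infty)$; Bernstein's theorem then represents $(S_\lambda v)(x)$ as a Laplace transform of an increasing function, which yields the domination $|S_\lambda v|\leq S_\delta v$ for $v\geq 0$ and $\Re\lambda\geq\delta$, and inductively $|S_\lambda^nv|\leq S_\delta^nv$, so $\|S_\lambda^n\|\leq 4\|S_\delta^n\|$. Since $S_\delta$ is positive, $r(S_\delta)\in\sigma(S_\delta)$, and invertibility of $I-S_\delta$ forces $r(S_\delta)<1$; the Neumann series for $(I-S_\lambda)^{-1}$ then converges for every $\lambda$ in the half-plane with a bound $4\|(I-S_\delta)^{-1}\|$ depending only on $\delta$ (and positivity of $(I-S_\delta)^{-1}$ falls out of the series). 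In other words, the paper reduces the entire half-plane to the single real point $\delta$, where positivity is available, rather than chasing decay at infinity. If you want to salvage your route, you would have to prove the complex interior decay estimate; otherwise I recommend adopting the complete-monotonicity argument.
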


\begin{proof}
	We first show that the function
	\[
		\CC_+\to\cL(C(\overline{\Omega})),\ \lambda\mapsto S_\lambda
	\]
	is holomorphic with $S_\lambda'=-R(\lambda, A_0)S_\lambda$. Here $\CC_+ \coloneqq \{ \lambda \in \CC : \Re \lambda >0\}$.
	
To this end, let $\lambda \in \CC_+$ and $ v\in C(\overline{\Omega})$ be given and, for small enough $\tau \in \CC_+$, 
set $u\coloneqq S_{\lambda+\tau}v$, $w\coloneqq S_\lambda v$. Then $u|_{\partial\Omega}=w|_{\partial\Omega}=\langle v,\mu \rangle$ and 
	\begin{align*}
		(\lambda+\tau)u-\cA u&=0\\
		\lambda w-\cA w&=0.
	\end{align*}
	It follows that $(u-w)|_{\partial\Omega}=0$ and $(\lambda+\tau)(u-w)-\cA (u-w)=-\tau w$. 
	Consequently $u-w=-R(\lambda+\tau, A_0)(\tau w)$.
	
	In other words
	\[
		\frac{1}{\tau}(S_{\lambda+\tau}v-S_\lambda v)=-R(\lambda+\tau, A_0)S_\lambda v.
	\]
	Sending $ \tau\to0 $ yields the claim.\medskip
	
It follows by induction that 
	\[ 
		\frac{d^n}{d\lambda^n}S_\lambda=(-1)^nn!R(\lambda, A_0)^nS_\lambda.
	\]
Noting that for $\lambda>0$ the operators $R(\lambda, A_0)$ and $S_\lambda$ are positive, it follows that
the function $ \lambda\mapsto S_\lambda $ on $ (0,\infty) $ is completely monotonic. 
Hence by Bernstein's Theorem (see \cite[Theorem 12a]{widder}) for each $0\leq v\in C(\overline{\Omega})$, $x\in\overline{\Omega}$ there exists an increasing function $\alpha\colon (0,\infty)\to\CR$ such that
	\[ 
		(S_\lambda v)(x)=\int_0^\infty e^{-\lambda t}  \, d\alpha(t)
	\]
for all $\lambda >0$. By the uniqueness theorem for holomorphic functions, the same formula also holds for complex $\lambda$ with $\Re\lambda >0$.	
	
	It follows that for $ \Re\lambda\geq\delta>0 $ we have
	\begin{equation}\label{eq.sest}
		|(S_\lambda v)(x)|\leq\int_0^\infty e^{-\delta t}\, d\alpha(t)=(S_\delta v)(x).
	\end{equation}
	
This proves that $S_\lambda$ is bounded on the half plane $\{\lambda\in\CC:\ \Re\lambda\geq\delta \}$.\medskip
	
	Since $S_\delta$ is a positive operator, the spectral radius $r(S_\delta)$ is in the spectrum of $S_\delta$, see \cite[Chapter V, Prop. 4.1]{schaefer}. Since $I-S_\delta$ is invertible by Lemma \ref{l.S-invertible} we have $r(S_\delta)<1$. Consequently,
	\[ 
		(I-S_\delta)^{-1}=\sum_{n=0}^\infty S^n_\delta
	\]
	where the sum converges absolutely in $\cL(C(\overline{\Omega}))$.  In particular, it follows that $(I-S_\delta)^{-1}\geq 0$.
	
Now let a complex $\lambda$ with $\Re\lambda \geq \delta$ be given. It follows inductively from \eqref{eq.sest} and Lemma
\ref{l.dominate} that
$|S^n_\lambda v| \leq S^n_\delta v$ whenever $v\geq 0$. Splitting a complex valued function in real and imaginary parts and those into positive and negative parts, it follows that $\|S^n_\lambda\| \leq 4 \|S^n_\delta\|$. This implies that also the series
	\[ 
	\sum_{n=0}^\infty S_\lambda^n 
	\]
converges, of course to $(I-S_\lambda)^{-1}$. It follows that 
	\[
		\|(I-S_\lambda)^{-1}\|\leq 4\|(I-S_\delta)^{-1}\|
	\]
for $ \Re\lambda\geq\delta $. 
\end{proof}

Having thus proved Proposition \ref{p.S-properties}, we are now ready to state and prove the main result of this section.

\begin{thm}\label{t.holom-semigroup}
The operator $A_{\mu}$, defined by \eqref{eq.amu}, generates a positive, holomorphic  semigroup 
$T_\mu$ on $L^\infty(\Omega)$. 
\end{thm}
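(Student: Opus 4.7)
The strategy is to write the resolvent of $A_\mu$ explicitly in terms of the resolvent of $A_0$ and the operator $S_\lambda$, namely to show that for every $\lambda \in \CC_+ \coloneqq \{\lambda : \Re\lambda > 0\}$ one has
\[
R(\lambda, A_\mu) = (I - S_\lambda)^{-1} R(\lambda, A_0),
\]
and then to conclude via the holomorphic estimate of Theorem \ref{t.holest}.

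\textbf{Surjectivity of $\lambda - A_\mu$.} Fix $f \in L^\infty(\Omega)$ and $\lambda \in \CC_+$. Set $w \coloneqq R(\lambda, A_0) f \in D(A_0)$, so that $w \in C_0(\Omega) \cap W(\Omega)$, $\cA w \in L^\infty(\Omega)$, and $\lambda w - \cA w = f$. By Proposition \ref{p.S-properties}, $I - S_\lambda$ is invertible on $C(\overline{\Omega})$, so we may define $u \coloneqq (I - S_\lambda)^{-1} w \in C(\overline{\Omega})$, which satisfies $u = w + S_\lambda u$. Since $S_\lambda u \in W(\Omega)$ by elliptic regularity and $\cA (S_\lambda u) = \lambda S_\lambda u \in C(\overline{\Omega})$, we have $u \in C(\overline{\Omega}) \cap W(\Omega)$ with $\cA u = \cA w + \lambda S_\lambda u \in L^\infty(\Omega)$. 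On the boundary, $w$ vanishes and $S_\lambda u$ attains boundary values $\langle u, \mu(\cdot)\rangle$, so
\[
u(z) = (S_\lambda u)(z) = \int_\Omega u(x)\, \mu(z, dx) \quad \text{for } z \in \partial\Omega,
\]
which means $u \in D(A_\mu)$. A direct computation gives $\lambda u - \cA u = \lambda w + \lambda S_\lambda u - \cA w - \lambda S_\lambda u = f$. Combined with the injectivity from Lemma \ref{l.amuinj}, this yields $\lambda \in \rho(A_\mu)$ and the claimed resolvent identity.

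\textbf{Holomorphic estimate and conclusion.} Theorem \ref{t.sectorial} together with Theorem \ref{t.holest} provides $\sup_{\Re\lambda > 0}\|\lambda R(\lambda, A_0)\| < \infty$ (in fact on a larger half-plane, since $A_0$ is exponentially stable). Combining this with the bound $\sup_{\Re\lambda \geq \delta}\|(I - S_\lambda)^{-1}\| < \infty$ of Proposition \ref{p.S-properties}, we obtain, for each $\delta > 0$,
\[
\sup_{\Re\lambda \geq \delta} \|\lambda R(\lambda, A_\mu)\| \leq \Big(\sup_{\Re\lambda \geq \delta} \|(I-S_\lambda)^{-1}\|\Big) \Big(\sup_{\Re\lambda \geq \delta} \|\lambda R(\lambda, A_0)\|\Big) < \infty.
\]
Theorem \ref{t.holest} then shows that $A_\mu$ generates a holomorphic semigroup $T_\mu$ on $L^\infty(\Omega)$. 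Finally, for real $\lambda > 0$, both $R(\lambda, A_0)$ (Theorem \ref{t.sectorial}) and $(I-S_\lambda)^{-1}$ (Proposition \ref{p.S-properties}) are positive, whence $R(\lambda, A_\mu) \geq 0$, and Proposition \ref{p.possg} yields positivity of $T_\mu$.

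The only subtle point is the verification that $u = (I - S_\lambda)^{-1} R(\lambda, A_0) f$ actually lies in $D(A_\mu)$; this is the step where the design of the operator $S_\lambda$, in particular the fact that its image consists of $\cA$-harmonic functions picking up the correct boundary values $\langle v, \mu(\cdot)\rangle$, is used crucially. All remaining ingredients are already in place.
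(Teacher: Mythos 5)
Your proposal is correct and follows essentially the same route as the paper: the identity $R(\lambda,A_\mu)=(I-S_\lambda)^{-1}R(\lambda,A_0)$ for $\Re\lambda>0$, surjectivity via $u=w+S_\lambda u$ combined with the injectivity of Lemma \ref{l.amuinj}, the holomorphic estimate from Theorem \ref{t.sectorial} and Proposition \ref{p.S-properties}, and positivity via Proposition \ref{p.possg}. Your verification that $u\in D(A_\mu)$ is in fact slightly more explicit than the paper's.
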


\begin{proof}
Let $\lambda \in \CC$ with $\Re\lambda>0$ be given. We claim that $\lambda \in \rho (A_\mu)$ and
\begin{equation}\label{eq.amures}
R(\lambda, A_\mu) = (I-S_\lambda)^{-1}R(\lambda, A_0).
\end{equation}
In fact, let $f \in L^\infty(\Omega)$, $w= R(\lambda, A_0)f \in C_0(\Omega)$ and $v= (I-S_\lambda)^{-1}w$.  Then $S_\lambda v = v-w$. This implies $(\lambda - \cA)(v-w) = 0$, hence $(\lambda -\cA)v= (\lambda - \cA)w = f$. Moreover, by the definition of $S_\lambda$, we have 
\[
v(z) = v(z) - w(z) = \int_\Omega v(x)\mu (z, dx).
\]
This shows $v \in D(A_\mu)$ and $(\lambda - A_\mu)v=f$. We have proved that $\lambda - A_\mu$ is surjective. 
Since $\lambda - A_\mu$ is injective by Lemma \ref{l.amuinj}, it follows that $\lambda \in \rho (A_\mu)$ and that 
\eqref{eq.amures} holds. We add that \eqref{eq.amures} remains true for $\lambda=0$ if $I-S_0$ is invertible.\medskip

Since $A_0$ generates a bounded holomorphic semigroup, there exists $M_0$ such that $\|\lambda R(\lambda, A_0)\| \leq M_0$
whenever $\Re\lambda >0$. Taking Proposition \ref{p.S-properties} into account, it follows from \eqref{eq.amures} that
\[
\|\lambda R(\lambda, A_\mu)\| \leq M_0\sup_{\Re\lambda \geq \delta}\|(I-S_\lambda)^{-1}\| < \infty
\]
for all $\lambda$ with $\Re\lambda \geq \delta$. This is the desired holomorphic estimate and it follows from Theorem \ref{t.holest}
that $A_\mu$ generates a holomorphic semigroup $T_\mu$.

It follows from \eqref{eq.amures}, the positivity of $(I-S_\lambda)^{-1}$ and that of $R(\lambda, A_0)$ for $\lambda >0$ that
$R(\lambda, A_\mu)\geq 0$ for real $\lambda >0$. Now the positivity of $T_\mu$ follows from Proposition \ref{p.possg}.
\end{proof}	
	
Let us discuss the situation where $A_\mu$ is invertible in more detail. We have

\begin{cor}\label{c.asstable}
Assume additionally that $A_\mu$ is injective. Then $A_\mu$ generates a bounded holomorphic semigroup which is exponentially stable.
\end{cor}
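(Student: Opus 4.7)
The plan is to show that injectivity of $A_\mu$ upgrades the conclusion of Theorem \ref{t.holom-semigroup} via the spectral criterion of Proposition \ref{p.posstable}. Concretely, I want to establish first that $0 \in \rho(A_\mu)$, then deduce exponential stability from positivity and holomorphy, and finally promote the growth bound on the real axis to a bound on the holomorphic estimate.

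First I would show that $0 \in \rho(A_\mu)$. By Lemma \ref{l.S-invertible}, the injectivity of $A_\mu$ guarantees that $I - S_0$ is invertible. Moreover, Theorem \ref{t.sectorial} ensures $T_0$ is exponentially stable, so in particular $0 \in \rho(A_0)$. As noted at the end of the proof of Theorem \ref{t.holom-semigroup}, the identity \eqref{eq.amures} extends to $\lambda = 0$ in this situation, giving $R(0, A_\mu) = (I - S_0)^{-1} R(0, A_0)$; hence $0$ lies in $\rho(A_\mu)$. Combined with $(0, \infty) \subset \rho(A_\mu)$ from Theorem \ref{t.holom-semigroup}, this yields $[0, \infty) \subset \rho(A_\mu)$. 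Since $T_\mu$ is a positive holomorphic semigroup, Proposition \ref{p.posstable} then applies and gives exponential stability of $T_\mu$.

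For the bounded holomorphicity, I would follow the argument inside the proof of Proposition \ref{p.expstable}. Exponential stability gives $s(A_\mu) < 0$, and since $A_\mu$ generates a holomorphic semigroup there exist $\theta \in (\pi/2, \pi)$ and $\omega \in \CR$ with $\omega + \Sigma_\theta \subset \rho(A_\mu)$ and $\|\lambda R(\lambda, A_\mu)\|$ bounded on $\omega + \Sigma_\theta$. Choosing $\eps \in (0, |s(A_\mu)|)$, the leftover set $K_\eps = \{\lambda : \Re\lambda \geq -\eps\} \setminus (\omega + \Sigma_\theta)$ is compact and disjoint from $\sigma(A_\mu)$, so $\|\lambda R(\lambda, A_\mu)\|$ is bounded there as well. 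Altogether $\sup_{\Re\lambda \geq 0} \|\lambda R(\lambda, A_\mu)\| < \infty$, and Theorem \ref{t.holest} (with $\omega = 0$) yields that $A_\mu$ generates a bounded holomorphic semigroup. The only genuinely non-mechanical step is the first one: matching the \emph{operator-theoretic} hypothesis (injectivity of $A_\mu$) with the \emph{functional-analytic} invertibility of $I - S_0$, and this is precisely what Lemma \ref{l.S-invertible} was designed to deliver; everything downstream is a routine application of the preparatory propositions of Section 2.
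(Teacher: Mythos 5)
Your argument is correct and follows the paper's own route: invertibility of $I-S_0$ via Lemma \ref{l.S-invertible} (packaged in the paper as Proposition \ref{p.S-properties}), then $0\in\rho(A_\mu)$ from the identity \eqref{eq.amures} at $\lambda=0$, hence $[0,\infty)\subset\rho(A_\mu)$ and exponential stability by Proposition \ref{p.posstable}. Your final paragraph merely spells out the bounded-holomorphy claim that the paper leaves implicit in Propositions \ref{p.posstable} and \ref{p.expstable}, so there is nothing substantively different here.
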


\begin{proof}
It follows from Proposition \ref{p.S-properties} that $I-S_0$ is invertible. Now \eqref{eq.amures} implies that $0 \in \rho (A_\mu)$ and 
$R(0,A_\mu) = (I-S_0)^{-1}R(0, A_0)$. Consequently, $[0,\infty) \subset \rho (A_0)$ and Proposition \ref{p.posstable} implies that the semigroup generated by $A_\mu$ is exponentially stable.
\end{proof}

To prove further properties of the semigroup $T_\mu$ we use the following lemma, which shows in particular, that a continuous function satisfying our nonlocal boundary condition always attains a positive maximum in the interior of $\Omega$.

\begin{lem}\label{l.intmax}
Let $u \in C(\overline{\Omega})$ be real-valued such that $u(z) \leq \langle u, \mu (z)\rangle$ for all $z \in \partial \Omega$. If 
$c\coloneqq \max_{x \in \overline{\Omega}} u(x) >0$, then there exists $x_0 \in \Omega$ such that $u(x_0) =c$.
\end{lem}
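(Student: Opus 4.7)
The plan is to argue by contradiction. Suppose no interior maximum exists, i.e.\ $u(x)<c$ for all $x\in\Omega$. Since $u$ attains its maximum $c$ somewhere on $\overline{\Omega}$, there must be a point $z_0\in\partial\Omega$ with $u(z_0)=c$. I will then extract a contradiction from the boundary inequality at $z_0$.

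First, I would chain the obvious estimates: from the assumption and from $u\le c$ together with $\mu(z_0,\Omega)\le 1$,
\[
  c \;=\; u(z_0) \;\le\; \int_\Omega u(x)\,\mu(z_0,dx) \;\le\; c\,\mu(z_0,\Omega) \;\le\; c.
\]
Hence equality holds throughout. Because $c>0$, the middle inequality forces $\mu(z_0,\Omega)=1$, and the outer equality forces
\[
  \int_\Omega \bigl(c-u(x)\bigr)\,\mu(z_0,dx) \;=\; 0.
\]

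Now I would use the continuity of $u$ together with the fact that $\mu(z_0,\cdot)$ is a nontrivial positive measure on $\Omega$. Pick any point $x_1$ in the support of $\mu(z_0,\cdot)$; this is possible since $\mu(z_0,\Omega)=1>0$, and by definition $x_1\in\Omega$. By our contradiction hypothesis, $c-u(x_1)>0$, so by continuity there is a neighborhood $U\subset\Omega$ of $x_1$ and a constant $\delta>0$ with $c-u\ge\delta$ on $U$. Since $x_1$ lies in the support of $\mu(z_0,\cdot)$, we have $\mu(z_0,U)>0$, and therefore
\[
  \int_\Omega (c-u)\,\mu(z_0,dx) \;\ge\; \delta\,\mu(z_0,U) \;>\; 0,
\]
contradicting the previous display. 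This forces the existence of $x_0\in\Omega$ with $u(x_0)=c$.

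The only subtle step is the last one, where one needs to invoke the support of $\mu(z_0,\cdot)$ rather than argue pointwise; the rest is a clean application of the nonlocal boundary inequality together with the mass constraint $\mu(z_0,\Omega)\le 1$. Note that the hypothesis $c>0$ is used only to rule out the degenerate case $\mu(z_0,\Omega)=0$ (which would give $0=\langle u,\mu(z_0)\rangle\ge u(z_0)=c$), so the argument indeed breaks down without it.
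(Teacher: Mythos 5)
Your proposal is correct and follows essentially the same route as the paper: assume no interior maximum, locate a boundary maximizer $z_0$, use $c>0$ to see $\mu(z_0)\neq 0$, pick a point in the support of $\mu(z_0)$, and contradict the boundary inequality via the integral estimate on a small neighborhood where $u\le c-\delta$. The only (cosmetic) difference is that you first force equality in the chain $c\le\int u\,d\mu(z_0)\le c\,\mu(z_0,\Omega)\le c$ before deriving the contradiction, whereas the paper computes the strict inequality directly.
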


\begin{proof}
Aiming for a contradiction, let us assume that $u(x)< c$ for all $x \in \Omega$. In this case, we have $u(z_0) = c$ for some $z_0 \in \partial \Omega$. Since $0<u(z_0) \leq \langle u, \mu (z_0)\rangle$, it follows that $\mu (z_0)\neq 0$. Pick $x_0$ in the support of $\mu (z_0)$. We find $r>0$ and $\delta >0$ such that $u(x) \leq c-\delta$ for all $x \in B(x,r)$ and $\eps \coloneqq\mu (z_0, B(x_0,r)) >0$.
The same computation as in the proof of Lemma \ref{l.amuinj} leads to a contradiction.
\end{proof}

As a first application of Lemma \ref{l.intmax}, we identify some situations in which $A_\mu$ is injective, so that Corollary \ref{c.asstable}
applies.

\begin{prop}\label{p.inv}
Assume that for every connected component $U$ of $\Omega$ on which $c_0$ vanishes we find a point $z_0\in \partial U$ with 
$\mu (z_0, \Omega)<1$.
Then $A_\mu$ is injective.
\end{prop}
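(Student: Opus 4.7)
The plan is to show, by contradiction, that any $u \in D(A_\mu)$ with $A_\mu u = 0$ cannot attain a strictly positive maximum or a strictly negative minimum on $\overline{\Omega}$, which forces $u\equiv 0$. The tools are Lemma \ref{l.intmax} (to locate the extremum in the interior), the strong maximum principle for strong solutions of elliptic equations (to spread the extremum over a whole connected component of $\Omega$), and finally the hypothesis on $\mu$ combined with the nonlocal boundary condition to reach a contradiction.

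Suppose first that $c := \max_{\overline{\Omega}} u > 0$. Since $u$ satisfies the nonlocal boundary condition with equality (so in particular $u(z) \leq \langle u, \mu(z)\rangle$), Lemma \ref{l.intmax} yields some $x_0 \in \Omega$ with $u(x_0)=c$. Let $U_0$ be the connected component of $\Omega$ containing $x_0$; a standard topological argument shows $\partial U_0 \subset \partial \Omega$, which will matter when we invoke the boundary condition. On $U_0$ the function $u$ is a strong $W^{2,p}_{\mathrm{loc}}$ solution of the uniformly elliptic equation $\mathscr{A}u = 0$ with continuous principal part and bounded lower order coefficients satisfying $c_0 \leq 0$. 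Since $c\geq 0$, the strong maximum principle for strong solutions (e.g.\ \cite[Theorem 9.6]{gt}) forces $u\equiv c$ on $U_0$.

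Because $u$ is constant on $U_0$, its first and second derivatives vanish there, and the equation $\mathscr{A}u=0$ collapses to $c_0\, c = 0$ a.e.\ on $U_0$. As $c>0$, we conclude $c_0 \equiv 0$ on $U_0$. The hypothesis of the proposition then supplies a point $z_0 \in \partial U_0 \subset \partial \Omega$ with $\mu(z_0,\Omega) < 1$. By continuity, $u \equiv c$ on $\overline{U_0}$, so $u(z_0)=c$, and the nonlocal boundary condition gives
\[
c \;=\; u(z_0) \;=\; \int_\Omega u(x)\, \mu(z_0, dx) \;\leq\; c\cdot \mu(z_0,\Omega) \;<\; c,
\]
a contradiction. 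Hence $\max u \leq 0$. Applying the same argument to $-u$, which also lies in $D(A_\mu)$ and satisfies $A_\mu(-u)=0$ by linearity, yields $\min u \geq 0$, so $u\equiv 0$.

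The main obstacle I anticipate is choosing and applying the right version of the strong maximum principle: the classical statement is for $C^2$ solutions, but here $u$ is only a strong $W^{2,p}_{\mathrm{loc}}$ solution, so one must invoke a version valid in this regularity class (and verify that the hypotheses — ellipticity, continuity of the $a_{ij}$, boundedness and sign of $c_0$ — are compatible with the reference chosen). Everything else in the argument — the inclusion $\partial U_0 \subset \partial \Omega$, the vanishing of $\mathscr{A}u$ on the constant region, and the symmetry argument via $-u$ — is routine.
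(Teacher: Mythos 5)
Your proof is correct and follows essentially the same route as the paper's: Lemma \ref{l.intmax} to push the positive maximum into the interior, the strong maximum principle for strong solutions \cite[Theorem 9.6]{gt} (which is exactly the $W^{2,d}_{\mathrm{loc}}$ version you were worried about, so your anticipated obstacle is already resolved by the reference the paper uses) to make $u$ constant on the component, the equation to force $c_0\equiv 0$ there, and the hypothesis $\mu(z_0,\Omega)<1$ to contradict the nonlocal boundary condition. The only cosmetic difference is that the paper reduces to the case $u^+\neq 0$ by replacing $u$ with $-u$ at the outset rather than running the maximum and minimum arguments separately.
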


\begin{proof}
Assume that $0\neq u \in D(A_\mu)$ satisfies $A_\mu u =0$. We may assume that $u^+\neq 0$, otherwise we replace $u$ with $-u$.
Then $\gamma \coloneqq \max_{x \in \overline{\Omega}} u(x)>0$. By Lemma \ref{l.intmax}, we find a point $x_0\in \Omega$ with 
$u(x_0) = \gamma$. Let $U$ be the connected component containing $x_0$. As a consequence of the strict maximum principle \cite[Theorem 9.6]{gt}, $u$ is constant on $U$, i.e.\ $u|_U = \gamma \one_U$. 

Since $\cA u = c_0 \gamma$ on $U$ it follows that $c_0$ vanishes on $U$. By our assumption, we find a point $z_0 \in \partial U$ with 
$\mu (z_0, \Omega) < 1$. But then we have that
\[
\gamma = u(z_0) = \langle u, \mu (z_0)\rangle \leq \langle u|_U, \mu (z_0)\rangle < \gamma.
\]
This is a contradiction. Consequently, we must have that $u=0$.
\end{proof}

We next prove that $T_\mu$ is a contraction semigroup.

\begin{prop}\label{p.tmucont}
We have $\|T_\mu(t)\| \leq 1$ for all $t>0$.
\end{prop}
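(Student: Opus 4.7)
The plan is to combine Proposition~\ref{p.contr} with the positivity of $T_\mu$ (established in Theorem~\ref{t.holom-semigroup}) to reduce the claim to a pointwise resolvent estimate against the constant function $\one$. More precisely, since $R(\lambda, A_\mu)$ is a positive operator for real $\lambda>0$, for any $f\in L^\infty(\Omega)$ one has $|\lambda R(\lambda,A_\mu)f|\leq\lambda R(\lambda,A_\mu)|f|\leq\|f\|_\infty\,\lambda R(\lambda,A_\mu)\one$, so it suffices to prove that $\lambda R(\lambda,A_\mu)\one\leq\one$ for all sufficiently large $\lambda>0$; Proposition~\ref{p.contr} then yields $\|T_\mu(t)\|\leq 1$.

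To establish the pointwise estimate, I would set $u\coloneqq R(\lambda,A_\mu)\one$, so that $u\in D(A_\mu)$ is nonnegative with $\lambda u-\cA u=\one$. Put $c\coloneqq\max_{x\in\overline{\Omega}}u(x)$, which exists because $u\in C(\overline{\Omega})$. If $c\leq 1/\lambda$, we are done. Otherwise $c>0$, and because $u$ satisfies the nonlocal boundary condition $u(z)=\langle u,\mu(z)\rangle$ on $\partial\Omega$, Lemma~\ref{l.intmax} provides a point $x_0\in\Omega$ with $u(x_0)=c$. Since $\cA u=\lambda u-\one$ and $u\in C(\overline{\Omega})$, the function $\cA u$ is continuous on a ball around $x_0$, so Lemma~\ref{l.localmax} applies and yields $u(x_0)\cA u(x_0)\leq 0$; as $u(x_0)=c>0$, this forces $\cA u(x_0)\leq 0$, i.e.\ $\lambda c-1\leq 0$. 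Hence $\lambda u(x)\leq\lambda c\leq 1$ for every $x\in\overline{\Omega}$, which is the desired bound.

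The only genuinely delicate step is locating the maximum in the interior of $\Omega$: in the classical Dirichlet case (see Theorem~\ref{t.sectorial}) this is immediate from boundedness of $u$ on $\partial\Omega$, but here $u$ may well attain its maximum at some $z\in\partial\Omega$ a priori. The nonlocal boundary condition $u(z)=\int_\Omega u\,\mathrm d\mu(z,\cdot)$ is exactly what rules this out once $c>0$, and this is the content of Lemma~\ref{l.intmax}. With the interior maximum in hand, the rest of the argument is a verbatim repetition of the contractivity proof given for $A_0$ in Theorem~\ref{t.sectorial}, and the invocation of Proposition~\ref{p.contr} concludes the proof.
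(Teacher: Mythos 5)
Your proof is correct and follows essentially the same route as the paper: reduce via Proposition~\ref{p.contr} and positivity to the estimate $\lambda R(\lambda,A_\mu)\one\leq\one$, locate an interior maximum via Lemma~\ref{l.intmax}, and apply the complex maximum principle (Lemma~\ref{l.localmax}) to conclude $\cA u(x_0)\leq 0$. Your explicit treatment of the trivial case $c\leq 1/\lambda$ is a small (and welcome) refinement, since Lemma~\ref{l.intmax} requires $c>0$; otherwise the argument is identical to the paper's.
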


\begin{proof}
As a consequence of Proposition \ref{p.contr} it suffices to prove $\|\lambda R(\lambda, A_\mu)\| \leq 1$ for all $\lambda >0$.
Taking the positivity of $R(\lambda, A_\mu)$ into account, we only need to show that $\lambda R(\lambda, A_\mu)\one \leq \one$.
Let us write $u = R(\lambda, A_\mu)\one$. By Lemma \ref{l.intmax} there exists a point $x_0 \in \Omega$ with $u(x_0) 
= c\coloneqq \max_{x \in \overline{\Omega}}u(x)$. As $\cA u = \lambda u - \one \in C(\overline{\Omega})$, we infer from 
Lemma \ref{l.localmax} that $\cA u(x_0) \leq 0$. Thus $\lambda u \leq \lambda u(x_0) = \lambda u(x_0) - 1 +1 = \cA u(x_0) +1 
\leq 1$. This finishes the proof.
\end{proof}

At this point, part (a) and the first half of part (b) of Theorem \ref{t.main} are proved. We end this section by proving that the semigroups generated by $A_\mu$ are monotonically increasing with respect to $\mu$.

\begin{prop}
Let $\mu_1, \mu_2 : \partial \Omega \to \mathscr{M}^+(\Omega)$ be two measure-valued functions satisfying Hypothesis \ref{hyp2} and denote the semigroups on $L^\infty(\Omega)$ generated by $A_{\mu_1}$ and $A_{\mu_2}$ by $T_{\mu_1}$ and $T_{\mu_2}$ respectively. If $\mu_1(z, A) \leq \mu_2(z,A)$ for all $z \in \partial \Omega$ and all Borel sets $A \subset \overline{\Omega}$, then
$0\leq T_{\mu_1}(t) \leq T_{\mu_2}(t)$ for all $t>0$.
\end{prop}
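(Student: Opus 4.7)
The plan is to use Proposition \ref{p.dom}, which reduces the claim to comparing resolvents. Since Theorem \ref{t.holom-semigroup} already delivers positivity of each $T_{\mu_i}$ and gives the resolvent formula
\[
R(\lambda, A_{\mu_i}) = (I - S_\lambda^{(i)})^{-1} R(\lambda, A_0)
\]
for $\lambda > 0$ (where $S_\lambda^{(i)}$ denotes the operator $S_\lambda$ built from $\mu_i$), it suffices to show
\[
0 \leq (I - S_\lambda^{(1)})^{-1} R(\lambda, A_0) f \leq (I - S_\lambda^{(2)})^{-1} R(\lambda, A_0) f
\]
for every $0 \leq f \in L^\infty(\Omega)$ and every sufficiently large $\lambda > 0$.

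First I would show the pointwise domination $S_\lambda^{(1)} v \leq S_\lambda^{(2)} v$ for $v \in C(\overline{\Omega})$ with $v \geq 0$ and $\lambda > 0$. Indeed, setting $\varphi_i(z) \coloneqq \langle v, \mu_i(z)\rangle$, the hypothesis $\mu_1(z, \cdot) \leq \mu_2(z, \cdot)$ together with $v \geq 0$ yields $0 \leq \varphi_1 \leq \varphi_2$ on $\partial \Omega$. The positivity of the solution operator for \eqref{eq.dirichlet} (Proposition \ref{p.wellposed}) applied to $\varphi_2 - \varphi_1 \geq 0$ with zero right-hand side then gives $0 \leq S_\lambda^{(1)} v \leq S_\lambda^{(2)} v$.

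The second ingredient is the Neumann series representation proved in the course of Proposition \ref{p.S-properties}: for each $\lambda > 0$ the spectral radius $r(S_\lambda^{(i)})$ is strictly less than $1$, so
\[
(I - S_\lambda^{(i)})^{-1} = \sum_{n=0}^\infty (S_\lambda^{(i)})^n
\]
with absolutely convergent sum, and each summand is a positive operator. Iterating the pointwise inequality from the previous paragraph (and using positivity of $S_\lambda^{(2)}$ at each iteration) gives $(S_\lambda^{(1)})^n v \leq (S_\lambda^{(2)})^n v$ for all $n \in \mathbb{N}_0$ and all $v \geq 0$. Summing yields $(I - S_\lambda^{(1)})^{-1} v \leq (I - S_\lambda^{(2)})^{-1} v$ for such $v$. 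Applying this with $v = R(\lambda, A_0) f \geq 0$, where $f \geq 0$ in $L^\infty(\Omega)$ (using positivity of $R(\lambda, A_0)$ from Theorem \ref{t.sectorial}), we obtain $0 \leq R(\lambda, A_{\mu_1}) f \leq R(\lambda, A_{\mu_2}) f$ for all $\lambda > 0$. Proposition \ref{p.dom} then finishes the proof.

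I do not expect a genuine obstacle here: everything reduces to positivity of the auxiliary operators $S_\lambda^{(i)}$, $R(\lambda, A_0)$ and to the Neumann series, all of which were already established. The only point requiring slight care is making sure the iteration $(S_\lambda^{(1)})^n \leq (S_\lambda^{(2)})^n$ is carried out on the positive cone (so that intermediate results remain non-negative and positivity of $S_\lambda^{(2)}$ preserves the inequality); for general $v$ one would split into real and imaginary, positive and negative parts, but this is unnecessary since Proposition \ref{p.dom} only needs comparison on the positive cone.
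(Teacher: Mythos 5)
Your proof is correct, but it follows a genuinely different route from the paper. You work through the perturbation machinery: you show that the map $\mu \mapsto S_\lambda$ is monotone on the positive cone (via the positivity of the solution operator in Proposition \ref{p.wellposed} applied to $\varphi_2 - \varphi_1 \geq 0$), iterate this together with the positivity of $S_\lambda^{(2)}$ to get $(S_\lambda^{(1)})^n v \leq (S_\lambda^{(2)})^n v$, and then sum the Neumann series $(I-S_\lambda^{(i)})^{-1} = \sum_n (S_\lambda^{(i)})^n$ from the proof of Proposition \ref{p.S-properties} to compare the resolvents through formula \eqref{eq.amures}. The paper instead argues directly on $u \coloneqq R(\lambda, A_{\mu_1})f - R(\lambda, A_{\mu_2})f$: on the boundary one rewrites
\[
u(z) = \langle u, \mu_1(z)\rangle - \langle u_2, \mu_2(z)-\mu_1(z)\rangle \leq \langle u, \mu_1(z)\rangle,
\]
using $u_2 \geq 0$ and $\mu_2 - \mu_1 \geq 0$, and then Lemma \ref{l.intmax} combined with the interior maximum principle (since $\lambda u - \mathscr{A}u = 0$) forces $u \leq 0$. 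Both arguments end by invoking Proposition \ref{p.dom}. The paper's version is shorter and reuses Lemma \ref{l.intmax}, which also drives Propositions \ref{p.inv} and \ref{p.tmucont}; your version is heavier but makes the structural mechanism explicit — monotonicity of $\mu \mapsto S_\lambda$ plus monotonicity of $S \mapsto (I-S)^{-1}$ on positive operators of spectral radius less than one — and it isolates exactly where the hypothesis $\mu_1 \leq \mu_2$ enters. Your closing remark about restricting the iteration to the positive cone is the right point of care, and it is indeed all that Proposition \ref{p.dom} requires.
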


\begin{proof}
Let $\lambda >0$ and $0\leq f \in L^\infty(\Omega)$. We define $u_j \coloneqq R(\lambda, A_{\mu_j})f$ and $u \coloneqq u_1-u_2$.
It follows that $u \in C(\overline{\Omega})$ and
\begin{align*}
u(z) & = \langle u_1,\mu_1(z)\rangle - \langle u_2, \mu_2(z)\rangle = \langle u, \mu_1(z)\rangle - \langle u_2, \mu_2(z) - \mu_1(z)\rangle\\
& \leq \langle u, \mu_1(z)\rangle.
\end{align*}
As a consequence of Lemma \ref{l.intmax}, $u\leq 0$. As $f \geq 0$ was arbitrary, $R(\lambda, A_{\mu_1}) \leq R(\lambda, A_{\mu_2})$.
Now Proposition \ref{p.dom} yields $T_{\mu_1}(t) \leq T_{\mu_2}(t)$ for $t>0$ as claimed.
\end{proof}

\section{The strong Feller property and its consequences}

So far, we have considered the semigroup $T_\mu$ on the space $L^\infty(\Omega)$. In the theory of Markov processes, it is more natural to work on the space $B_b(\overline{\Omega})$ of all bounded and measurable functions on $\overline{\Omega}$ and to  consider so called \emph{kernel operators}. We write $K \coloneqq \overline{\Omega}$ and briefly recall the relevant notions in this situation. For this and further results on  kernel operators and semigroups of kernel operators (also in more general situations), 
we refer to \cite{k09, k11}.

A (bounded) \emph{kernel} on $K$ is a map $k: K\times \cB (K) \to \CC$ such that 
\begin{enumerate}[(i)]
\item the map $x \mapsto k(x,A)$ is Borel-measurable for all $A \in \cB (K)$, 
\item the map $A \mapsto k(x,A)$ is a (complex) measure on $\cB (K)$ for each $x \in K$ and
\item we have $\sup_{x\in E}|k|(x,K)<\infty$, where $|k|(x,\cdot)$ denotes the total variation of the measure $k(x,\cdot)$.
\end{enumerate}

Let $X= C(K)$ or $X= B_b(K)$. We call an operator $T \in \cL (X)$ a \emph{kernel operator} if there exists a kernel $k$ such that
\begin{equation}
\label{eq.rep}
Tf(x) = \int_K f(y)\, k(x, dy)
\end{equation}
for all $f \in X$ and $x \in K$. As there is at most one kernel $k$ satisfying \eqref{eq.rep}, we call $k$ the \emph{kernel associated with $T$} and, conversely, $T$ the \emph{operator associated with $k$}. Obviously, a kernel operator is positive if and only if the associated kernel consists of positive measures. 
\smallskip 

We note that every bounded operator on $C(K)$ is a kernel operator. Indeed, given $T \in \cL (C(K))$, we can set $k(x, \cdot ) \coloneqq T^*\delta_x \in \mathscr{M}(K)$. Using standard arguments (see e.g.\ the proof of \cite[Proposition 3.5]{k11}) one sees that 
$x \mapsto k(x,A)$ is measurable for any Borel set $A$. Thus, $k$ is a kernel. It is straightforward to see that the associated operator is $T$. Since $T$ is given by a kernel $k$, we can extend $T$ to  a kernel operator $\tilde{T}$ on $B_b(K)$ by defining $\tilde{T}f(x)$ by 
the right hand side of \eqref{eq.rep}. We call the operator $\tilde{T}$ the \emph{canonical extension} of $T$ to $B_b(K)$. We note that there might be other extensions of $T$ to a bounded operator on $B_b(K)$ but $\tilde{T}$ is the only one which is a kernel operator. 

A bounded operator on $B_b(K)$ need not be a kernel operator. It turns out that an operator $T \in \cL (B_b(K))$ is a kernel operator if and only if the adjoint $T^*$ leaves the space $\mathscr{M}(K)$ invariant. For us another characterization is more useful.

\begin{lem}\label{l.kernelchar}
Let $T \in \cL (B_b(K))$. The following are equivalent.
\begin{enumerate}
[(i)]
\item $T$ is a kernel operator.
\item $T$ is \emph{pointwise continuous}, i.e.\ if $f_n$ is a bounded sequence converging pointwise to $f$, then $Tf_n$ converges pointwise to $Tf$.
\end{enumerate}
If $T$ is positive, it suffices to consider bounded and increasing sequences in (ii).
\end{lem}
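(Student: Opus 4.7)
For (i) $\Rightarrow$ (ii), suppose $Tf(x)=\int_K f(y)\,k(x,dy)$. Since each $|k|(x,\cdot)$ is a finite measure on $K$ and any bounded sequence $f_n$ converging pointwise to $f$ is dominated by the constant $\sup_n\|f_n\|_\infty$, the dominated convergence theorem applied to $k(x,\cdot)$ for each fixed $x$ yields $Tf_n(x)\to Tf(x)$. The same argument works with increasing sequences.

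For (ii) $\Rightarrow$ (i), the strategy is to read off the kernel from the action of $T$ on indicator functions. Set $k(x,A)\coloneqq T\one_A(x)$ for $x\in K$ and $A\in\cB(K)$. Measurability of $x\mapsto k(x,A)$ is automatic since $T\one_A\in B_b(K)$. To see that $k(x,\cdot)$ is countably additive, fix a disjoint sequence $(A_n)$ with union $A$; the partial sums $\sum_{n\leq N}\one_{A_n}$ are bounded by $\one_A$ and converge pointwise to $\one_A$, so pointwise continuity of $T$ gives $k(x,A)=\sum_n k(x,A_n)$. For the uniform bound on total variation, given a finite measurable partition $A_1,\ldots,A_m$ of $K$, choose signs $\eps_i\in\mathbb{T}$ with $\sum_i|k(x,A_i)|=\sum_i\eps_i\,T\one_{A_i}(x)=T\bigl(\sum_i\eps_i\one_{A_i}\bigr)(x)$; since $\|\sum_i\eps_i\one_{A_i}\|_\infty\leq 1$, this is bounded by $\|T\|$, hence $|k|(x,K)\leq\|T\|$ uniformly in $x$.

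It remains to verify the representation $Tf(x)=\int_K f(y)\,k(x,dy)$ for every $f\in B_b(K)$. By linearity, the identity holds on simple functions. For a general bounded Borel function $f$, approximate pointwise by a uniformly bounded sequence of simple functions $s_n$ (e.g.\ the usual dyadic truncations, applied separately to real/imaginary and positive/negative parts). On the left, $Ts_n(x)\to Tf(x)$ by hypothesis (ii); on the right, $\int s_n\,dk(x,\cdot)\to\int f\,dk(x,\cdot)$ by dominated convergence against $|k|(x,\cdot)$.

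For the final addendum, if $T$ is positive, the countable additivity step uses only the \emph{increasing} sequence $\sum_{n\leq N}\one_{A_n}\uparrow\one_A$, and the representation step can be reduced to nonnegative $f$ (splitting into real and imaginary and then positive and negative parts) and handled with monotone approximations $s_n\uparrow f$, where $Ts_n\uparrow Tf$ pointwise by positivity and the restricted hypothesis, while $\int s_n\,dk(x,\cdot)\uparrow \int f\,dk(x,\cdot)$ by monotone convergence. The only delicate point in the whole argument is the uniform total-variation bound, which is the step most easily overlooked; everything else is a standard measure-theoretic bootstrap from indicators to simple to bounded measurable functions.
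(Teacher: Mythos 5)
Your proposal is correct and follows essentially the same route as the paper: dominated convergence for (i) $\Rightarrow$ (ii), and for the converse the definition $k(x,A) := (T\one_A)(x)$ followed by verification that $k$ is a kernel and a bootstrap from simple functions to all of $B_b(K)$. The only (harmless) difference is that you pass from simple functions to general $f$ by pointwise approximation and dominated convergence, where the paper invokes uniform density of simple functions in $B_b(K)$; your explicit verification of the uniform total-variation bound fills in a detail the paper leaves implicit.
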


\begin{proof}
The implication ``(i) $\Rightarrow$ (ii)'' follows immediately from dominated convergence. For the converse, set 
$k(x,A) \coloneqq (T\one_A)(x)$. As $T$ operates on $B_b(K)$, the function $x \mapsto k(x,A)$ is measurable in $x$. Condition (ii) yields that $k(x,\cdot)$ is a measure, thus $k$ is a kernel. Using that simple functions are dense in $B_b(K)$, it is easy to see that $T$ is associated with $k$.
\end{proof}

\begin{cor}\label{c.kernelclosed}
The space of all kernel operators is norm closed in $\cL (B_b(K))$.
\end{cor}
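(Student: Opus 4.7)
The plan is to apply the characterization of kernel operators provided by Lemma \ref{l.kernelchar}: an operator $T \in \cL(B_b(K))$ is a kernel operator if and only if it is pointwise continuous on bounded sequences. So the problem reduces to showing that pointwise continuity on bounded sequences is preserved under norm limits.

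Concretely, suppose $(T_n) \subset \cL(B_b(K))$ is a sequence of kernel operators with $T_n \to T$ in operator norm. Let $(f_k)$ be a bounded sequence in $B_b(K)$, say $\|f_k\|_\infty \leq M$, converging pointwise to some $f \in B_b(K)$ (so $\|f\|_\infty \leq M$ as well). Fix $x \in K$ and $\eps >0$. For any $n$ I would write the standard splitting
\[
|Tf_k(x) - Tf(x)| \leq |(T-T_n)f_k(x)| + |T_nf_k(x) - T_nf(x)| + |(T_n-T)f(x)|,
\]
and use that pointwise evaluation at $x$ is dominated by the sup norm, so the first and third terms are each bounded by $\|T - T_n\|\cdot M$. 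Pick $n$ large enough so that $\|T-T_n\| \leq \eps/(3M)$; this controls the outer terms by $\eps/3$ each. Then, since $T_n$ is a kernel operator and hence pointwise continuous by Lemma \ref{l.kernelchar}, there is $k_0$ such that $|T_nf_k(x) - T_nf(x)| < \eps/3$ for all $k \geq k_0$. Combining gives $|Tf_k(x) - Tf(x)| < \eps$ for all $k\geq k_0$, so $Tf_k(x) \to Tf(x)$. Since $x$ was arbitrary, $T$ is pointwise continuous, hence a kernel operator by Lemma \ref{l.kernelchar}.

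There is no real obstacle here; the argument is a textbook three-epsilon estimate in which the order of quantifiers works out correctly because the norm of $T - T_n$ dominates pointwise values of $(T-T_n)f_k$ uniformly in $k$ (thanks to $\|f_k\|_\infty \leq M$), while the pointwise convergence step is applied to the fixed operator $T_n$ only after $n$ has been chosen. The one point worth flagging explicitly in the write-up is the uniformity in $k$ in the bound $|(T-T_n)f_k(x)| \leq \|T-T_n\|\cdot M$, which is what allows the outer two terms to be handled simultaneously before passing to the limit in $k$.
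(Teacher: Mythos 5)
Your proof is correct and follows exactly the route the paper intends: the corollary is stated without proof as an immediate consequence of the characterization in Lemma~\ref{l.kernelchar}, and your three-epsilon argument (with the uniformity in $k$ of the bound $|(T-T_n)f_k(x)|\leq \|T-T_n\|\,M$ correctly flagged) is the standard way to fill in that detail.
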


Of particular interest are \emph{strong Feller operators}, i.e.\ kernel operators on $B_b(K)$ which only take values in $C(K)$. A bounded operator on $C(K)$ (which is automatically a kernel operator) is called \emph{strong Feller operator} if its canonical extension to $B_b(K)$ is strongly Feller.
It is easy to see that a kernel operator (on $C(K)$ or $B_b(K)$) is strongly Feller if and only if for the associated kernel $k$ the function $x \mapsto k(x,A)$ is continuous for every Borel set $A$. Using Corollary \ref{c.kernelclosed}, it follows that the set of all strong Feller operators is a norm closed subspace of $\cL (B_b(K))$.

The importance of strong Feller operators for us stems from the following result.

\begin{lem}\label{l.ultra}
Let $T,S$ be positive strong Feller operators. Then the product $ST$ is a compact operator on $\cL (B_b(K))$.
\end{lem}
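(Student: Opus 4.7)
My plan is to prove compactness via Arzelà--Ascoli on the compact space $K$, with the key intermediate step being an \emph{ultra-Feller} property for the product: the map $x \mapsto k_{ST}(x, \cdot)$ from $K$ into $\mathscr{M}(K)$ is continuous in the total-variation norm. Granted this, let $(f_n) \subset B_b(K)$ be bounded by $1$; then $h_n := ST f_n$ lies in $C(K)$ (since $S$ is strong Feller) with $\|h_n\|_\infty \leq \|ST\|$, and the estimate
\[
|ST f(x) - ST f(y)| \leq \|f\|_\infty \, \|k_{ST}(x,\cdot) - k_{ST}(y,\cdot)\|_{TV}
\]
together with uniform continuity of $x \mapsto k_{ST}(x,\cdot)$ on the compact $K$ gives equicontinuity of $\{STf : \|f\|_\infty \leq 1\}$. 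Arzelà--Ascoli then yields a uniformly convergent subsequence $(h_{n_k})$, proving compactness of $ST$ on $B_b(K)$.

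To establish the ultra-Feller property, fix $x_n \to x_0$ in $K$ and set $\nu := \sum_{n\geq 0} 2^{-n} k_S(x_n, \cdot)$, a finite positive measure dominating each $k_S(x_n, \cdot)$. Write $\rho_n := d k_S(x_n, \cdot)/d\nu \in L^1(\nu)_+$. The strong Feller property of $S$ translates to $S g(x_n) \to S g(x_0)$ for every $g \in B_b(K)$, i.e.\ $\rho_n \rightharpoonup \rho_0$ weakly in $L^1(\nu)$. Simultaneously, the strong Feller property of $T$ says that $\{Tf : \|f\|_\infty \leq 1\}$ is a uniformly bounded family in $C(K) \subset L^\infty(\nu)$. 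The goal is to deduce
\[
\sup_{\|f\|_\infty \leq 1} \Big| \int Tf(y)(\rho_n - \rho_0)(y)\, d\nu(y)\Big| \;\longrightarrow\; 0,
\]
which, by the duality of total variation, is exactly the required TV convergence $k_{ST}(x_n, \cdot) \to k_{ST}(x_0, \cdot)$.

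The main obstacle is precisely this uniform upgrade from weak-$L^1$ to norm-type control against the family $\{Tf\}$. Weak-$L^1$ convergence of $(\rho_n)$ by itself is not enough: the example $\rho_n = 1 + \sin(2\pi n x)$ on $[0,1]$ shows that positive $\rho_n$ may converge weakly in $L^1$ with preserved $L^1$-norm yet satisfy $\|\rho_n - 1\|_{L^1} = 2/\pi$. One therefore genuinely needs the joint strong Feller structure together with the positivity of $S, T$, via a Dunford--Pettis / selection argument: weak compactness gives uniform integrability of $(\rho_n)$, and the continuity of the integrands $Tf$ combined with positivity allows a measure-theoretic selection (as developed for transition kernels in \cite{k09, k11}) that produces the sought uniform estimate. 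Once this is in place, the Arzelà--Ascoli step completes the proof routinely.
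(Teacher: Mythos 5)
Your overall strategy coincides exactly with the paper's: reduce compactness of $ST$ to the \emph{ultra-Feller} property of the product (total-variation continuity of $x\mapsto k_{ST}(x,\cdot)$) and then apply Arzel\`a--Ascoli on the compact set $K$. That reduction, including the estimate $|STf(x)-STf(y)|\leq\|f\|_\infty\,\|k_{ST}(x,\cdot)-k_{ST}(y,\cdot)\|_{TV}$ and the observation that $STf\in C(K)$, is correct. The paper does not prove the ultra-Feller step itself; it cites \cite[\S 1.5]{revuz}. You attempt to prove it, and the attempt has a genuine gap at precisely the point you flag yourself: after correctly reducing to $\sup_{\|f\|_\infty\leq1}\big|\int Tf\,(\rho_n-\rho_0)\,d\nu\big|\to0$ and correctly observing (with the $1+\sin(2\pi nx)$ example) that weak $L^1$-convergence of $(\rho_n)$ alone cannot yield this, you assert that ``a Dunford--Pettis / selection argument'' combined with positivity ``produces the sought uniform estimate.'' That sentence \emph{is} the lemma; nothing in your text actually produces the estimate, and the references \cite{k09,k11} do not contain this result --- the standard source is Revuz.

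For the record, the missing argument runs roughly as follows, and each step is where the hypotheses you invoke actually do work. Set $\lambda:=\nu T$. If $\lambda(A)=0$ then $T\one_A\geq0$ vanishes $\nu$-a.e., hence --- \emph{because $T\one_A$ is continuous} --- vanishes on $\supp\nu$; thus $k_T(y,\cdot)\ll\lambda$ for every $y\in\supp\nu$, and Doob's lemma yields a jointly measurable density $q(y,z)$ with $k_T(y,dz)=q(y,z)\,\lambda(dz)$ there. Then
\[
\|k_{ST}(x_n,\cdot)-k_{ST}(x_0,\cdot)\|_{TV}=\int_K\Big|\int_K q(y,z)\,(\rho_n-\rho_0)(y)\,d\nu(y)\Big|\,d\lambda(z),
\]
and one truncates $q$ at level $M$: the truncated part tends to $0$ for fixed $M$ by weak $L^1$-convergence and dominated convergence (here $q_M(\cdot,z)\in L^\infty(\nu)$), while the remainder is controlled \emph{uniformly in $n$} by the uniform integrability of $(\rho_n)$ (Dunford--Pettis) applied to $G_M(y)=\int(q-q\wedge M)(y,z)\,d\lambda(z)\downarrow0$. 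This is where the strong Feller property of $T$ (not only of $S$) and positivity genuinely enter. As written, your proposal reproduces the paper's reduction but replaces the paper's citation by an unproved claim, so it is not a complete proof.
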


\begin{proof}
It is well known that the product of two positive strong Feller operators is \emph{ultra Feller}, i.e.\ it maps bounded subsets of $B_b(K)$ to equicontinuous subsets of $C(K)$. A proof of this fact can be found in \cite[\S 1.5]{revuz}. As $K$ is compact, it follows from the Arzel\`a-Ascoli theorem that an equicontinuous subset of $C(K)$ is relatively compact. It follows that
an ultra Feller operator is compact.
\end{proof}

Let us now come back to the situation considered in Section \ref{sect.three}. We had operators $T \in \cL (L^\infty(\Omega))$ such that
$Tf \in C(\overline{\Omega}) = C(K)$ for all $f \in L^\infty(\Omega)$. In particular, we can consider the restriction $T_{C(K)}$ of such an operator to $C(K)$. By the above $T_{C(K)}$ is a kernel operator and thus has a canonical extension $\tilde{T}$ to $B_b(K)$. The obvious question is whether $\tilde{T} = T\circ \iota$ where $\iota: B_b(K) \to L^\infty(\Omega)$ maps a bounded measurable function to its equivalence class modulo equality almost everywhere. 

Unfortunately, this need not be the case. The problem is that $T\circ \iota$ need not be a kernel operator.
\begin{example}
We give an example for $K = \CN \cup \{\infty\}$ where the neighborhoods of $\infty$ are sets of the form $\{n, n+1, n+2, \ldots\}$. In this case $B_b(K) = \ell^\infty(K)$ and 
\[
C(K) = \{ (x_1, x_2, \ldots, x_\infty) : x_n \to x_\infty\}.
\]
Pick a Banach limit $\varphi$, i.e.\ a functional in $(\ell^\infty)^*$ with $\varphi (x) = \lim (x)$ for all convergent sequences $x$ (which is positive and satisfies $\varphi \circ L = \varphi$ for the left shift $L$).

Define $T \in \cL (\ell^\infty)$ by
\[
T(x_1, x_2, \ldots, x_\infty) = \varphi (x_1, x_2, x_3, \ldots) \cdot (1,1,1,\ldots, 1).
\]
Then $T$ indeed takes values in $C(K)$. The kernel associated with $T_{C(K)}$ is $k(x, A) = \delta_\infty (A)$. Thus the extension 
$\tilde{T}$ evaluates functions $x=(x_1, x_2, \ldots, x_\infty)$ at the point $\infty$. However, this need not be the value of $\varphi (x_1, x_2, \ldots)$, so that $\tilde T \neq T$.
\end{example}

Directly from the characterization in Lemma \ref{l.kernelchar} we obtain

\begin{lem}\label{l.linftysf}
Let $T \in \cL (L^\infty(\Omega))$ be a positive operator taking values in $C(\overline{\Omega})$ and let $\iota : B_b(\overline{\Omega}) \to L^\infty(\Omega)$ be as above. Then $T\circ \iota$ is a kernel operator if and only if whenever 
$f_n$ is a bounded, increasing sequence of positive functions in $L^\infty(\Omega)$ with $f(x) = \sup f_n(x)$ for almost every $x\in \Omega$, 
we have $Tf_n(x) \to Tf(x)$ for all $x \in \overline{\Omega}$.
In this case, $T\circ \iota$ is a strong Feller operator.
\end{lem}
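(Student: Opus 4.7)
The plan is to deduce this directly from the characterisation of kernel operators in Lemma \ref{l.kernelchar}, once we sort out the issue of passing between pointwise equality on $\overline{\Omega}$ and equality a.e.\ on $\Omega$. Since $T$ is positive and $\iota$ is positive, $T\circ\iota$ is a positive operator on $B_b(\overline{\Omega})$ (its range lies in $C(\overline{\Omega})\subset B_b(\overline{\Omega})$); by the last sentence of Lemma \ref{l.kernelchar} it therefore suffices, for the kernel-operator characterisation, to test pointwise continuity on bounded, increasing sequences of positive functions. The last assertion, that $T\circ\iota$ is a strong Feller operator once it is a kernel operator, is then immediate since $(T\circ\iota)(B_b(\overline{\Omega}))\subset T(L^\infty(\Omega))\subset C(\overline{\Omega})$.

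For the forward implication, suppose $T\circ\iota$ is a kernel operator, and let $(f_n)\subset L^\infty(\Omega)$ be bounded, positive and increasing with $f(x)=\sup_n f_n(x)$ for a.e.\ $x$. Choose any representatives $\tilde f_n,\tilde f\in B_b(\overline{\Omega})$; modifying them on the common exceptional null set $N\subset\Omega$ (say by setting all of them equal to $0$ on $N$ and on $\partial\Omega$), we obtain new representatives $\hat f_n,\hat f\in B_b(\overline{\Omega})$ with $0\leq\hat f_n\uparrow\hat f$ pointwise on all of $\overline{\Omega}$, and such that $\iota(\hat f_n)=f_n$, $\iota(\hat f)=f$ in $L^\infty(\Omega)$. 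By Lemma \ref{l.kernelchar} applied to $T\circ\iota$,
\[
Tf_n(x)=(T\circ\iota)\hat f_n(x)\longrightarrow(T\circ\iota)\hat f(x)=Tf(x)
\]
for every $x\in\overline{\Omega}$, which is the desired conclusion.

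For the converse, assume the stated pointwise convergence property and let $(g_n)\subset B_b(\overline{\Omega})$ be bounded, positive and pointwise increasing to $g\in B_b(\overline{\Omega})$. Then $\iota(g_n)$ is a bounded increasing sequence in $L^\infty(\Omega)$ with $\sup_n g_n(x)=g(x)$ pointwise, in particular a.e., so by hypothesis $T\iota(g_n)(x)\to T\iota(g)(x)$ for every $x\in\overline{\Omega}$, i.e.\ $T\circ\iota$ is pointwise continuous on bounded increasing sequences of positive functions. The positive version of Lemma \ref{l.kernelchar} then shows that $T\circ\iota$ is a kernel operator.

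The only technical point to watch is the construction of pointwise monotone representatives in the forward direction: the ``a.e.'' hypothesis allows arbitrary behaviour on a null set, and one must choose representatives globally so that monotonicity holds at every point of $\overline{\Omega}$ (including $\partial\Omega$, which typically has Lebesgue measure zero). This is handled by the simple truncation above, after which everything reduces mechanically to Lemma \ref{l.kernelchar}.
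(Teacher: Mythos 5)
Your proof is correct and follows exactly the route the paper intends: the paper gives no argument beyond the phrase ``directly from the characterization in Lemma \ref{l.kernelchar}'', and your write-up supplies precisely the missing details (the choice of pointwise monotone representatives modulo a null set, and the reduction to the positive/increasing version of Lemma \ref{l.kernelchar}). No gaps.
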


Motivated by Lemma \ref{l.linftysf}, we call an operator $T$ on $L^\infty(\Omega)$ a \emph{strong Feller operator} if $TL^\infty(\Omega) \subset C(\overline{\Omega})$ and $T\circ \iota$ is a strong Feller operator. Let us note that the set of all strong Feller operators is a closed subspace of $\cL (L^\infty(\Omega))$. 
We also remark that if $A$ has Lebesgue measure zero, then $\iota (\one_A) =0$ and thus $T(\iota (\one_A)) = 0$ in $L^\infty(\Omega)$. This implies that
if $T \in \cL (L^\infty(\Omega))$ is strong Feller, then for the kernel $k$ associated with $T\circ \iota$
the measure $k(x, \cdot)$ is absolutely continuous with respect to Lebesgue measure on $\Omega$ for all $x \in \overline{\Omega}$.

\begin{cor}\label{l.linftyultra}
Let $T,S \in \cL (L^\infty(\Omega))$ be strong Feller operators. Then $TS$ is compact. 
\end{cor}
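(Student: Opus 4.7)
The plan is to lift the compactness question from $L^\infty(\Omega)$ to $B_b(\overline{\Omega})$, apply Lemma \ref{l.ultra} there, and push the resulting compactness back down through $\iota$. Define $\tilde T \coloneqq T \circ \iota$ and $\tilde S \coloneqq S \circ \iota$. By the very definition of strong Feller for operators on $L^\infty(\Omega)$, both $\tilde T$ and $\tilde S$ are strong Feller kernel operators on $B_b(\overline{\Omega})$ valued in $C(\overline{\Omega})$, which I identify with its canonical embedded copy in $B_b(\overline{\Omega})$ via the unique continuous representative. Hence $\tilde T, \tilde S \in \cL(B_b(\overline{\Omega}))$, and Lemma \ref{l.ultra} yields that $\tilde T \tilde S$ is compact on $B_b(\overline{\Omega})$. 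In the non-positive case, one first decomposes each of $\tilde T, \tilde S$ into finitely many positive strong Feller summands via the Jordan decomposition of the associated kernel, and applies the lemma to each resulting pair.

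The key identity is the commutation $\iota \circ \tilde T \tilde S = TS \circ \iota$ as maps $B_b(\overline{\Omega}) \to L^\infty(\Omega)$. This holds because for each $f \in B_b(\overline{\Omega})$ the function $\tilde S f \in C(\overline{\Omega})$ is, by construction, the continuous representative of the class $S \iota f \in L^\infty(\Omega)$, so $\iota \tilde S f = S \iota f$; applying the same identity to $\tilde T$ yields $\iota \tilde T \tilde S f = T \iota \tilde S f = T S \iota f$.

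To conclude, let $B \subset L^\infty(\Omega)$ denote the closed unit ball. Given $g \in B$, one may pick a representative $f \in B_b(\overline{\Omega})$ with $\|f\|_\infty \leq 1$ (truncate on a nullset if necessary), so $B \subseteq \iota(B')$ where $B' \subset B_b(\overline{\Omega})$ is the closed unit ball. Combining this with the commuting diagram gives
\[
TS(B) \;\subseteq\; \iota\bigl(\tilde T \tilde S(B')\bigr),
\]
which is the continuous $\iota$-image of a relatively compact subset of $B_b(\overline{\Omega})$ and is therefore relatively compact in $L^\infty(\Omega)$. Thus $TS$ is compact. The main technical point in the argument is the careful identification that promotes $\tilde T$ and $\tilde S$ from maps $B_b(\overline{\Omega}) \to L^\infty(\Omega)$ to honest endomorphisms of $B_b(\overline{\Omega})$, so that the diagram actually commutes; the non-positive bookkeeping for Lemma \ref{l.ultra} is a secondary but unavoidable detail.
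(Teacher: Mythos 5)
Your main line of argument is exactly the paper's: promote $T\circ\iota$ and $S\circ\iota$ to endomorphisms of $B_b(\overline{\Omega})$ via the (unique) continuous representative, observe that $(T\circ\iota)(S\circ\iota)=(TS)\circ\iota$ because two continuous functions on $\overline{\Omega}$ that agree a.e.\ on $\Omega$ agree everywhere, apply Lemma \ref{l.ultra}, and transport compactness back to $L^\infty(\Omega)$ using that every class in the unit ball of $L^\infty(\Omega)$ has a Borel representative in the unit ball of $B_b(\overline{\Omega})$. All of this is correct and is precisely what the paper does (the paper phrases the last step with sequences rather than images of balls, which is immaterial).

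The one step that would fail is your treatment of the non-positive case. Lemma \ref{l.ultra} is indeed stated only for \emph{positive} strong Feller operators, but the Jordan decomposition of the kernel does not in general produce positive strong Feller summands: continuity of $x\mapsto\int f\,dk(x,\cdot)$ for every $f\in B_b(K)$ is not inherited by $k^+$. For instance, on $K=[0,1]$ the kernel $k(x,dy)=\sin(y/x)\,dy$ for $x>0$ and $k(0,\cdot)=0$ is strong Feller (continuity at $x=0$ is the Riemann--Lebesgue lemma, and elsewhere dominated convergence), yet $k^+(x,[0,1])=\int_0^1(\sin(y/x))^+\,dy\to \tfrac{1}{\pi}\neq 0$ as $x\to 0$, so $k^+$ is not strong Feller. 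Hence the reduction to the positive case by decomposition does not go through. To be fair, the paper's own proof silently invokes Lemma \ref{l.ultra} without addressing positivity either, and in every application in the paper (products $T_\mu(t/2)T_\mu(t/2)$ and the resulting resolvents) the operators are positive, so nothing is lost there; but as a proof of the corollary as literally stated, your proposed fix does not close that gap, and you should either add positivity to the hypotheses or supply a genuine argument for signed kernels.
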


\begin{proof}
Let $f_n$ be a bounded sequence in $L^\infty(\Omega)$. We pick representatives $\tilde{f}_n$ of $f_n$ in $B_b(\Omega)$. Extending 
$\tilde{f}_n$ with zero to $\overline{\Omega}$, we find functions $\tilde{f}_n$ in $B_b(\overline{\Omega})$
with $\tilde f_n = f_n$ a.e.\ on $\Omega$. By Lemma \ref{l.ultra},
the operator $(S\circ \iota)(T\circ \iota)$ is compact. Since two continuous functions which are equal almost everywhere are equal, 
$(S\circ \iota)(T\circ \iota) = (ST)\circ \iota$. It follows from Lemma \ref{l.ultra} that $((ST)\circ \iota )\tilde{f}_n = STf_n$ has a (uniformly) converging subsequence.
\end{proof}

Concerning the operator $A_{\mu}$ we have

\begin{prop}\label{p.strongfeller}
The operator $R(\lambda, A_{\mu})$ is a strong Feller operator for each $\lambda >0$.
\end{prop}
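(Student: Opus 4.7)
The plan is to leverage the factorization $R(\lambda, A_\mu) = (I-S_\lambda)^{-1} R(\lambda, A_0)$ established in \eqref{eq.amures} and to show that each factor behaves well under the bounded-monotone pointwise convergence required by Lemma \ref{l.linftysf}. Since $D(A_\mu) \subset C(\overline{\Omega})$ by definition, the resolvent $R(\lambda, A_\mu)$ already takes values in $C(\overline{\Omega})$; in view of Lemma \ref{l.linftysf}, it therefore suffices to show that whenever $0 \le f_n \uparrow f$ is a bounded increasing sequence in $L^\infty(\Omega)$ converging a.e.\ to $f$, one has $R(\lambda, A_\mu) f_n(x) \to R(\lambda, A_\mu) f(x)$ for every $x \in \overline{\Omega}$.

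The heart of the argument is the continuity of $R(\lambda, A_0)$ as an operator from $L^d(\Omega)$ into $C(\overline{\Omega})$. This is essentially contained in Proposition \ref{p.wellposed}: for $f \in L^d(\Omega)$, the unique solution of $\lambda u - \cA u = f$ with $u|_{\partial\Omega}=0$ lies in $C(\overline{\Omega}) \cap W^{2,d}_\mathrm{loc}(\Omega)$, and Aleksandrov's maximum principle (invoked in the proof of that proposition) yields an estimate of the form $\|u\|_\infty \le C \|f\|_{L^d}$. For $f \in L^\infty(\Omega) \subset L^d(\Omega)$, the uniqueness statement in Proposition \ref{p.wellposed} identifies this $u$ with $R(\lambda, A_0) f$. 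Since $\Omega$ is bounded, dominated convergence shows that the sequence $f_n$ converges to $f$ also in $L^d(\Omega)$, so $R(\lambda, A_0) f_n \to R(\lambda, A_0) f$ uniformly on $\overline{\Omega}$.

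By Proposition \ref{p.S-properties}, $(I-S_\lambda)^{-1}$ is a bounded operator on $C(\overline{\Omega})$, hence preserves uniform convergence; consequently $R(\lambda, A_\mu) f_n = (I-S_\lambda)^{-1} R(\lambda, A_0) f_n$ converges to $R(\lambda, A_\mu) f$ uniformly, and in particular pointwise, on $\overline{\Omega}$. Combined with the fact that $R(\lambda, A_\mu)$ takes values in $C(\overline{\Omega})$, Lemma \ref{l.linftysf} then shows that $R(\lambda, A_\mu)$ is a strong Feller operator. The only mildly delicate point is the passage from the $L^\infty$-resolvent to the $L^d$-continuous-dependence statement, but since $\Omega$ is bounded this reduces, via Aleksandrov, to a routine identification already built into Proposition \ref{p.wellposed}; no further analytic input is needed.
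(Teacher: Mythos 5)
Your argument is correct, but it follows a genuinely different route from the paper's. You exploit the factorization $R(\lambda,A_\mu)=(I-S_\lambda)^{-1}R(\lambda,A_0)$ from \eqref{eq.amures} and reduce the whole matter to two facts: the Dirichlet solution operator $f\mapsto R(\lambda,A_0)f$ is bounded from $L^d(\Omega)$ into $C(\overline{\Omega})$ (via Aleksandrov's maximum principle, or equivalently the closed graph theorem applied to $R(\lambda,\mathscr{P})$ as in the proof of Proposition \ref{p.wellposed}, together with the uniqueness statement there to identify it with the $L^\infty$-resolvent), and $(I-S_\lambda)^{-1}$ is bounded on $C(\overline{\Omega})$ (Proposition \ref{p.S-properties}). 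Since a bounded increasing sequence converges in $L^d(\Omega)$ on the bounded set $\Omega$, you even obtain norm convergence of $R(\lambda,A_\mu)f_n$ in $C(\overline{\Omega})$, which is stronger than the pointwise convergence that Lemma \ref{l.linftysf} requires. The paper instead works directly with $u_n=R(\lambda,A_\mu)f_n$: it uses the interior estimate of Proposition \ref{p.local-regularity} to get locally uniform convergence to some $u\in C_b(\Omega)$, the weak continuity and tightness of $\mu$ together with Dini's theorem to obtain uniform convergence of the boundary traces, and then the Poisson operator and the closed graph theorem to show that $u\in D(A_\mu)$ with $\lambda u-A_\mu u=f$. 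Both proofs rest on the same analytic input, namely the a priori bound $\|u\|_\infty\leq C(\|f\|_{L^d(\Omega)}+\|\varphi\|_{C(\partial\Omega)})$; yours is more economical because it offloads the handling of the nonlocal boundary condition onto the already established invertibility of $I-S_\lambda$ on $C(\overline{\Omega})$, whereas the paper re-derives the boundary condition for the limit function by hand. The only points you leave implicit are the positivity of $R(\lambda,A_\mu)$, which Lemma \ref{l.linftysf} presupposes and which is available from Theorem \ref{t.holom-semigroup}; this is a citation issue, not a gap.
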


\begin{proof}
Since $R(\lambda, A_\mu)$ is a positive operator on $L^\infty (\Omega)$ taking values in $C(\overline{\Omega})$ for $\lambda>0$,
it follows from Lemma \ref{l.linftysf}, that we only need to prove that 
$R(\lambda, A_{\mu})f_n \to R(\lambda, A_{\mu})f$ pointwise, whenever
$f_n$ is a bounded and increasing sequence converging pointwise almost everywhere to $f$.
It was seen in the proof of Theorem \ref{t.holom-semigroup} that $R(\lambda, A_{\mu})$ is positive, thus the sequence 
$u_n \coloneqq R(\lambda, A_{\mu})f_n$ is increasing. We set $u(x) \coloneqq \sup u_n(x)$ for all $x \in \overline{\Omega}$. 
It is a consequence of Proposition \ref{p.local-regularity} that $u_n$ converges locally uniformly in $\Omega$. Thus $u \in C_b(\Omega)$. We set 
$\varphi (z) \coloneqq \langle u, \mu (z)\rangle$. By the continuity assumption on $\mu$, the function $\varphi : \partial \Omega \to \CR$ is continuous. It follows from dominated convergence that
\begin{equation}\label{eq.urandbed}
\varphi (z) = \int_\Omega u(x) \, \mu (z, dx) = \lim_{n\to \infty} \int_\Omega u_n(x)\, \mu (z, dx) = \lim_{n\to\infty} u_n(z) = u(z).
\end{equation}
Here we have used that $u_n \in D(A_{\mu})$. As a consequence of Dini's theorem, $u_n$ converges uniformly to $\varphi$ on 
$\partial \Omega$.

We now consider the Poisson operator $\mathscr{P}$ on $L^d(\Omega) \oplus C(\partial \Omega)$ from the proof of 
Proposition \ref{p.wellposed}. Then we have $(u_n, 0) \in D(\mathscr{P})$ and $(\lambda - \mathscr{P}) (u_n,0) = (f_n, u_n|_{\partial\Omega})$. As $D(\mathscr{P}) \subset C(\overline{\Omega}) \oplus \{0\}$, it follows from the closed graph theorem that $R(\lambda ,\mathscr{P})$ is continuous as an operator from $L^d(\Omega)\oplus C(\partial \Omega)$ to $C(\overline{\Omega})\oplus \{0\}$, where 
$C(\overline{\Omega})$ is endowed with the topology of uniform convergence on $\overline{\Omega}$. By the above, 
$(f_n, u_n|_{\partial \Omega})$ converges to $(f, \varphi)$ in $L^d(\Omega) \oplus C(\partial \Omega)$. Thus 
\begin{equation}\label{eq.pois}
(u_n, 0) = R(\lambda, \mathscr{P})(f_n, u_n|_{\partial \Omega}) \to R(\lambda, \mathscr{P})(f, \varphi) =\colon (w, 0) 
\end{equation}
in $C(\overline{\Omega})\oplus \{0\}$. In particular, $u_n \to w$ uniformly on $\overline{\Omega}$. Since $u_n(x) \to u(x)$ for all $x \in \overline{\Omega}$, we must have $w(x)=u(x)$ for all $x \in \overline{\Omega}$. 
It follows from \eqref{eq.pois} that $w\in W^{2,d}_\mathrm{loc}(\Omega)\cap C(\overline{\Omega})$
and $\lambda w - \cA w = f$. Since $f\in L^\infty(\Omega)$, elliptic regularity (see Proposition \ref{p.wellposed}) implies $w \in W(\Omega)$. We have thus proved that $u=w \in C(\overline{\Omega})\cap W(\Omega)$. In view of \eqref{eq.urandbed}, it follows that $u \in D(A_\mu)$ and $\lambda u - A_\mu u = f$.

This shows that $R(\lambda, A_\mu )f_n$ converges pointwise to $R(\lambda, A_\mu) f$, proving that $R(\lambda, A_\mu)$ is a strong Feller operator.
\end{proof}

We now obtain more information about the resolvent $R(\lambda, A_{\mu})$ and the semigroup $T_\mu$ generated by $A_{\mu}$. 

\begin{cor}\label{c.strongfeller}
For every $\lambda \in \rho (A_\mu)$ the operator $R(\lambda, A_{\mu})$ is strongly Feller and compact; moreover also $T_\mu(t)$ is strongly Feller and compact for $t>0$, where $T_\mu$ is the semigroup generated by $A_{\mu}$.
\end{cor}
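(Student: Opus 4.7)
The plan is to bootstrap Proposition~\ref{p.strongfeller} (strong Feller of $R(\lambda,A_\mu)$ for real $\lambda>0$) to every $\lambda \in \rho(A_\mu)$ and to the semigroup, and then to extract compactness from Corollary~\ref{l.linftyultra}. First I would record two properties of the class $\mathcal{S}\subset \cL(L^\infty(\Omega))$ of strong Feller operators: it is norm-closed (as noted after Lemma~\ref{l.linftysf}), and it is closed under composition. The latter follows, without invoking positivity, from the general pointwise-continuity criterion of Lemma~\ref{l.kernelchar}: if $S,T\in\mathcal{S}$ and $f_n\to f$ is a bounded pointwise-convergent sequence in $B_b(\overline{\Omega})$, then $S\iota f_n$ is a bounded pointwise-convergent sequence of continuous functions, and viewing this sequence in $B_b(\overline{\Omega})$ via $C(\overline{\Omega})\subset B_b(\overline{\Omega})$ the strong Feller property of $T$ yields pointwise convergence of $T\iota(S\iota f_n) = (TS)\iota f_n$; moreover $TS$ lands in $C(\overline{\Omega})$ since $T$ does.

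Fixing any $\mu_0>0$, Proposition~\ref{p.strongfeller} places $R(\mu_0, A_\mu)$ in $\mathcal{S}$, hence so does every power $R(\mu_0,A_\mu)^{n+1}$. The Taylor expansion
\[
R(\lambda,A_\mu) = \sum_{n=0}^\infty (\mu_0-\lambda)^n R(\mu_0,A_\mu)^{n+1}
\]
converges in operator norm on the disc $|\lambda-\mu_0|<\|R(\mu_0,A_\mu)\|^{-1}$, so norm-closedness of $\mathcal{S}$ gives $R(\lambda,A_\mu)\in\mathcal{S}$ there, including for complex $\lambda$. A standard open-and-closed argument (the local Taylor step combined with norm-continuity of $\lambda\mapsto R(\lambda,A_\mu)$) shows that $\{\lambda\in\rho(A_\mu) : R(\lambda,A_\mu)\in\mathcal{S}\}$ coincides with the connected component of $\rho(A_\mu)$ containing $(0,\infty)$; since $A_\mu$ generates a holomorphic semigroup, this component contains a right half-plane.

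Next I would invoke the Dunford representation
\[
T_\mu(t) = \frac{1}{2\pi i}\int_\Gamma e^{\lambda t} R(\lambda,A_\mu)\dx\lambda \qquad (t>0),
\]
with $\Gamma$ a contour in the right half-plane enclosing $\sigma(A_\mu)$. The integrand lies in $\mathcal{S}$ by the previous step, the integral converges in operator norm by standard holomorphy estimates, and the norm-closedness of $\mathcal{S}$ forces $T_\mu(t)\in\mathcal{S}$. Since $T_\mu$ is positive, the semigroup identity $T_\mu(t) = T_\mu(t/2)^2$ together with Corollary~\ref{l.linftyultra} yields compactness of $T_\mu(t)$ for every $t>0$. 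For $\Re\lambda$ sufficiently large, the Laplace representation $R(\lambda,A_\mu)=\int_0^\infty e^{-\lambda t} T_\mu(t)\dx t$ is a norm-convergent Bochner integral of compact operators, hence compact; the resolvent identity then propagates compactness to every $\lambda\in\rho(A_\mu)$. Compactness forces $\sigma(A_\mu)$ to be discrete, so $\rho(A_\mu)=\CC\setminus\sigma(A_\mu)$ is connected, which closes the open-and-closed argument and yields strong Feller throughout $\rho(A_\mu)$.

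The main obstacle lies in the first step: closure of $\mathcal{S}$ under composition without a positivity assumption. The convenient monotone-sequence characterization of Lemma~\ref{l.linftysf} is specific to positive operators and will not handle the complex coefficients $(\mu_0-\lambda)^n$ in the Taylor series; one is forced to work at the $B_b$-level through Lemma~\ref{l.kernelchar}, carefully tracking how the embedding $\iota$ intertwines pointwise convergence of continuous functions with the action of operators on $L^\infty(\Omega)$.
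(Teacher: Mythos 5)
Your proposal is correct, and its overall architecture coincides with the paper's: extend the strong Feller property from $(0,\infty)$ to the rest of the resolvent set, push it to the semigroup via an operator-valued integral over a contour lying in that region, obtain compactness of $T_\mu(t)$ from $T_\mu(t)=T_\mu(t/2)T_\mu(t/2)$ and the ultra Feller argument, and recover compactness of the resolvent from the Laplace representation. The one step you handle genuinely differently is the continuation. The paper notes that the strong Feller operators form a closed \emph{linear subspace} of $\cL(L^\infty(\Omega))$ and applies the identity theorem for vector-valued holomorphic functions, using that $\rho(A_\mu)$ is connected (an assertion it does not justify at that point). You instead expand $R(\lambda,A_\mu)$ in its Taylor--Neumann series about points of $(0,\infty)$, which requires the strong Feller class to be closed under composition in addition to sums, scalar multiples and norm limits; your verification of composition-closure through the pointwise-continuity criterion of Lemma \ref{l.kernelchar}, rather than the monotone criterion of Lemma \ref{l.linftysf} that is tied to positivity, is sound, so the ``obstacle'' you flag in your last paragraph is already disposed of by your own first paragraph. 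Your route costs this extra lemma (which the paper in effect needs anyway for the compactness step) but buys a cleaner treatment of connectedness: you work first on the component of $\rho(A_\mu)$ containing $(0,\infty)$ --- which suffices for the contour representation of $T_\mu(t)$, since that component contains a translated sector $\omega+\Sigma_\theta$ with $\theta>\frac{\pi}{2}$ --- and only afterwards deduce from compactness that the spectrum is discrete, hence that $\rho(A_\mu)$ is connected. One cosmetic inaccuracy: your contour $\Gamma$ is not contained in the right half-plane and does not ``enclose'' $\sigma(A_\mu)$; it is the boundary of such a translated sector, but it does lie in the region where you have already established the strong Feller property, so the argument stands.
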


\begin{proof}
The function $\rho (A_\mu) \ni \lambda \mapsto R(\lambda, A_{\mu})$ is analytic with values in 
$\cL (L^\infty(\Omega))$. On $(0,\infty)$, it takes values in the closed subspace of strong Feller operators as a consequence of Proposition \ref{p.strongfeller}. 
Since $\rho (A_\mu)$ contains $\Sigma_\theta \setminus \{0\}$ for a suitable $\theta >\frac{\pi}{2}$ and the latter set is
connected, the uniqueness theorem for holomorphic functions \cite[Proposition A.2]{abhn} implies that $R(\lambda, A_{\mu})$ is a strong Feller operator for every $\lambda \in \Sigma_\theta\setminus \{0\}$.
As the semigroup $T_\mu$ can be computed from the resolvent via a (operator-valued) Bochner integral over a contour in
$\Sigma_\theta\setminus\{0\}$, it follows that the semigroup $T_\mu$ consists of strong Feller operators, too.

Now Lemma \ref{l.ultra} implies that $T_\mu (t) = T_\mu(t/2)T_\mu(t/2)$ is compact for all $t>0$. Consequently, also the resolvent
$R(\lambda, A_\mu)$, being given as a Bochner integral $R(\lambda, A_\mu ) = \int_0^\infty e^{-\lambda t}T_\mu (t)\, dt$ for $\Re \lambda>0$, consists of compact operators. In particular, $\sigma (A_\mu)$ is countable and thus $\rho (A_\mu)$ is connected.
Thus, invoking the uniqueness theorem for holomorphic functions a second time, it follows that $R(\lambda, A_\mu)$ is a strong Feller operator for all $\lambda \in \rho (A_\mu)$.
\end{proof}

In particular, Corollary \ref{c.strongfeller} yields the rest of part (b) and part (c) of Theorem \ref{t.main}.
We now finish the proof of Theorem \ref{t.main}.

\begin{proof}[Proof of part (d) of Theorem \ref{t.main}]
If $A_\mu$ is invertible, then $T_\mu$ is exponentially stable by Corollary \ref{c.asstable}. Thus in this situation, assertion (d) 
of Theorem \ref{t.main} is valid for $P=0$. 

So let us now assume that $0 \in \sigma (A_\mu)$. Since $A_\mu$ has compact resolvent and since $\|\lambda R(\lambda, A)\| \leq 1$
for $ \lambda >0$, 
it follows that $A_\mu$ has a pole of order 1 at 0. Since $\omega + \Sigma_\theta \subset \rho (A_\mu)$ for some
$\theta \in (\frac{\pi}{2},\pi)$ and some $\omega \geq 0$ (see the proof of Proposition \ref{p.expstable}), it follows that 
$\sigma (A_\mu)\cap i\CR$ is finite. On the other hand, \cite[Remark C-III.2.15]{schreibmaschine} shows that
$\sigma (A_\mu)\cap i\CR$ is cyclic, i.e.\ if $is \in \sigma (A_\mu)$ for some $s \in \CR$, then also $iks \in \sigma (A)$
for all $k \in \CZ$. Consequently,
$\sigma (A_\mu)\cap i\CR =\{0\}$. This implies that $\{0\}$ is a dominating eigenvalue, i.e.\ there exists $\eps>0$ such that
$\sigma (A_\mu)\setminus \{0\} \subset \{ \lambda \in \CC : \Re \lambda \leq -\eps\}$. 

Denote by $P$ the residuum of $R(\lambda, A_\mu)$ at $0$; this is the same as the spectral projection associated with $\{0\}$. 
It follows from the representation of $T_\mu$ as a contour integral that for suitable constants $M, \eps>0$ we have
 $\|T(t)(I-P)\| \leq Me^{-\eps t}$ for all
$t>0$, see \cite[Theorem 2.6.2]{abhn}. Since on the other hand $T(t)P = P$ for $t>0$, we find
\[
\|T(t)-P\| = \|T(t)(I-P) + T(t)P - P\| = \|T(t)(I-P)\| \leq Me^{-\eps t}
\]
for all $t>0$.
\end{proof}

\begin{rem}
Part (d) of Theorem \ref{t.main} implies that the operator $P$ is always of finite rank. This might be surprising since $\Omega$ may have infinitely many connected components. Let us illustrate how Hypothesis \ref{hyp2} is responsible for the behavior in Theorem \ref{t.main}(d).

Let us assume that $\Omega$ consists of a countable number of connected components $(\Omega_k)_{k\geq 1}$ such that also the closures $\overline{\Omega_k}$ are pairwise disjoint. Such an open set $\Omega$ can be Dirichlet regular, e.g.\ in dimension one where \emph{every} bounded open set is Dirichlet regular.

Let us moreover assume that $\mu$ consists of probability measures and that via $\mu$ there is ``no communication'' between the connected components, more precisely $\mu (z, \Omega_k)=1$ for all $z \in \partial \Omega_k$ and all $k\in\CN$.  
In this situation one would expect the kernel of $A_\mu$ be infinite, thus $P$ be not of finite rank. However, it turns out that in this situation Hypothesis \ref{hyp2} does not hold.

To see this, pick $z_n \in \partial \Omega_n$. It follows from the boundedness of $\Omega$, that this sequence has an accumulation point $z_0 \in \partial \Omega$, say the subsequence $z_{n_k}$ converges to $z_0$. Noting that $\one_{\Omega_j}$ is a continuous function on $\Omega$, it would follow from the $\sigma (\mathscr{M}(\Omega), C_b(\Omega))$-continuity of $z \mapsto \mu(z)$, that  
\[
\mu(z_{n_k}, \Omega_j) =\int \one_{\Omega_j}\, d\mu(z_{n_k}) \to \int \one_{\Omega_j}\, d\mu(z_0) =\mu (z_0, \Omega_j)
\]
 as $k \to \infty$ for all $j\in \CN$. As $\mu(z_{n_k}, \Omega_j) = 0$ for $k$ large enough,
 $\mu (z_0, \Omega_j) = 0$ for all $j\in \CN$. But then we would have  that $\mu (z_0, \Omega) = \sum_{n\in \CN} \mu (z, \Omega_n) =0$. This contradicts the assumption that every measure $\mu(z)$ is a probability measure on $\Omega$.
\end{rem}

We now describe the asymptotic behavior of the semigroup $T_\mu$ in  the case where $\Omega$ is connected.

\begin{cor}\label{c.asympt}
Assume that $\Omega$ is connected. 
\begin{enumerate}
[(a)]
\item  If $c_0 \neq 0$ or there exists a point $z\in \partial \Omega$ with $\mu (z, \Omega) < 1$, then 
$P=0$, i.e.\ $T_\mu$ is exponentially stable.
\item If $c_0= 0$ and $\mu(z)$ is a probability measure on $\Omega$ for every $z \in \partial \Omega$, then there exists a function $0\leq h \in L^1(\Omega)$ with $\int_\Omega h\, dx =1$ such that
\[
P f = \int_\Omega f h\, dx \cdot \one_{\overline{\Omega}}.
\]
\end{enumerate}
\end{cor}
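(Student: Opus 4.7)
The plan is to obtain (a) directly from the injectivity criterion of Proposition \ref{p.inv} together with Corollary \ref{c.asstable}, and to derive (b) by computing $\ker A_\mu$ and then upgrading the spectral projection $P$ to an integral operator with an $L^1$-density via the strong Feller property from Corollary \ref{c.strongfeller}.

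For (a), since $\Omega$ is its own unique connected component, the hypothesis of Proposition \ref{p.inv} is satisfied under either alternative: if $c_0\neq 0$ there is no connected component of $\Omega$ on which $c_0$ vanishes, so the condition holds vacuously; otherwise the assumed point $z\in\partial\Omega$ with $\mu(z,\Omega)<1$ supplies the required boundary witness. Either way $A_\mu$ is injective, and Corollary \ref{c.asstable} then yields exponential stability, i.e.\ $P=0$.

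For (b) I would first verify that $\one_{\overline{\Omega}}\in D(A_\mu)$ and $A_\mu\one=0$: indeed $\cA\one=c_0=0$, and $\one(z)=1=\mu(z,\Omega)=\int_\Omega\one\,d\mu(z,\cdot)$ is precisely the nonlocal boundary condition. Next I would show $\ker A_\mu=\CC\one_{\overline{\Omega}}$. Since the coefficients are real and the measures $\mu(z)$ are positive, it suffices to treat real-valued $u\in\ker A_\mu$. For such $u$ the assumptions $c_0=0$ and $\mu(z,\Omega)=1$ imply that $u-c\in\ker A_\mu$ for every real constant $c$, so I may assume either $u$ is already constant or $\max u>0$. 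In the latter case Lemma \ref{l.intmax} places the maximum at some $x_0\in\Omega$, and the strong maximum principle \cite[Theorem 9.6]{gt} on the connected open set $\Omega$ forces $u$ to be constant. Because $0$ is a simple pole of $R(\cdot,A_\mu)$ (as established in the proof of Theorem \ref{t.main}(d)), the residue $P$ projects onto $\ker A_\mu$, so $P$ has rank one and takes the form $Pf=\phi(f)\,\one_{\overline{\Omega}}$ for some positive linear functional $\phi$ on $L^\infty(\Omega)$, with $\phi(\one)=1$ inherited from $P\one=\lim_{t\to\infty}T_\mu(t)\one=\one$.

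The main obstacle is to realize $\phi$ as integration against an $L^1$-density. For this I would argue that each $T_\mu(t)$ with $t>0$ is strong Feller by Corollary \ref{c.strongfeller}, that the strong Feller operators form a closed subspace of $\cL(L^\infty(\Omega))$, and that $T_\mu(t)\to P$ in norm by Theorem \ref{t.main}(d); hence $P$ itself is strong Feller. Consequently the kernel $k_P$ associated to $P\circ\iota$ satisfies $k_P(x,\cdot)\ll dx$ on $\Omega$, and since $Pf(x)=\phi(f)$ does not depend on $x$, all the measures $k_P(x,\cdot)$ coincide with a single positive absolutely continuous measure $\nu=h\,dx$ with $0\leq h\in L^1(\Omega)$. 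This gives the claimed formula, and $\int_\Omega h\,dx=1$ is again just $P\one=\one$.
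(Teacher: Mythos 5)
Your proposal is correct and follows essentially the same route as the paper: part (a) via Proposition \ref{p.inv} and Corollary \ref{c.asstable}, and part (b) by identifying $\ker A_\mu=\CC\,\one_{\overline{\Omega}}$ (re-deriving the maximum-principle argument from the proof of Proposition \ref{p.inv}), using that $P$ is a norm limit of the strong Feller operators $T_\mu(t)$, and extracting the density $h$ from the absolute continuity of the associated kernel --- which is just the kernel-level phrasing of the paper's construction of the measure $\nu(A)=\varphi([\one_A])$ followed by Radon--Nikodym. No gaps.
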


\begin{proof}
(a) By Proposition  \ref{p.inv} $A_\mu$ is injective. Thus
$T_\mu$ is exponentially stable by Corollary \ref{c.asstable}.\smallskip

(b) In this case, $\one_{\overline{\Omega}} \in \ker A_\mu$. It follows from the proof of Proposition \ref{p.inv}, that $\ker A_\mu = \CC \cdot \one_{\overline{\Omega}}$. Thus $P$ is a rank one projection, i.e.\ $Pf = \varphi (f) \cdot \one_{\overline{\Omega}}$. 
Since $T_\mu (t) \to P$ in operator norm, it follows that $P$ is a strong Feller operator. In particular, if $f_n$ is a bounded increasing
sequence converging a.e.\ to $f$ then $Pf_n \to Pf$ pointwise, i.e.\ $\varphi (f_n) \to \varphi (f)$. But this implies that $\varphi (f) = 
\int f h\, dx$ for some $0\leq h \in L^1(\Omega)$. Indeed, if we set $\nu (A) = \varphi ([\one_A])$, where $[f]$ denotes the equivalence class of $f$ modulo equality almost everywhere, then it follows from the additional 
continuity property of $\varphi$ that $\nu$ is a measure.
Obviously $\nu$ is absolutely continuous with respect to Lebesgue measure whence it has a density $h$ by the Radon--Nikodym theorem.
\end{proof}

\begin{rem}
Let us comment on the speed of convergence, i.e.\ the possible choices for 
$\eps$ in Theorem \ref{t.main}(d). Since our semigroup is analytic,
the growth bound and the spectral bound coincide, see \cite[Corollary 2.3.2]{lunardi}. In case (a) of the above corollary where $P=0$ the spectral bound $s(A_\mu)<0$ belongs to the spectrum (since the semigroup is positive),
 and we can allow every $\eps$ with $s(A_\mu) < - \eps < 0$ in \ref{t.main}(d). In case (b), let
$s_* \coloneqq \sup \{ \Re \lambda : \lambda \in \sigma (A_\mu) \setminus \{0\} \}$. Then we can allow every $\eps$
with $s_* < -\eps < 0$.

\end{rem}

\section*{Acknowledgement}
The authors thank Zakhar Kabluchko for some valuable discussions and, in particular, for pointing out references \cite{b-ap07, b-ap09}
to us.

\def\cprime{$'$}

\end{document}